\documentclass[12pt,oneside,reqno]{amsart}

\usepackage{amsthm, amssymb, amsmath, amsfonts}
\usepackage{enumerate}
\usepackage[colorlinks]{hyperref}

\newtheorem{thm}{Theorem}[section]

\newtheorem{defi}[thm]{Definition}
\newtheorem{lem}[thm]{Lemma}
\newtheorem{prop}[thm]{Proposition}
\newtheorem{corl}[thm]{Corollary}
\newtheorem{rem}[thm]{Remark}

\newtheorem{exm}{Example}
\allowdisplaybreaks
\title[ Amenable representation]{Amenable unitary representations of locally compact groupoids}
\author[K. N. Sridharan]{K. N. Sridharan}
\address{K. N. Sridharan,\newline\indent Department of Mathematics,\newline\indent Indian Institute of Technology Delhi,\newline\indent New Delhi - 110016, India.}
\email{sreedu242@gmail.com}
\author[N. S. Kumar]{N. Shravan Kumar}
\address{N. Shravan Kumar,\newline\indent Department of Mathematics,\newline\indent Indian Institute of Technology Delhi,\newline\indent New Delhi - 110016, India.}
\email{shravankumar.nageswaran@gmail.com}
\begin{document}
\begin{abstract}
    Let $G$ be a second countable locally compact groupoid equipped with a Haar system $\lambda$. In this work, we introduce and develop the notion of amenability for continuous unitary representations of $G$, formulated in terms of Hilbert bundles over the unit space $G^{0}$. We prove that $G$ is amenable if and only if its left regular representation is amenable, thereby extending Bekka’s characterisation of amenable unitary representations from groups to groupoids. We further investigate the amenability of induced representations of $G$ developed in terms of continuous unitary representation and also study the representation of properly amenable groupoids. We define a topological invariant mean associated with a representation, constructed by utilising the theory of operator-valued vector measures on the unit space $G^{0}$, to characterise amenability. Finally, we investigate the relationship between the amenability of a representation and the amenability of its restriction to isotropy subgroups with an emphasis on HLS and certain transitive groupoids.
\end{abstract}
\keywords{Amenable representation, Induced representation, Topological invariant mean}
\subjclass{Primary 18B40, 22A22; Secondary 46L08}
\maketitle
\section{Introduction}
Amenability has played a foundational role in the analysis of locally compact groups, with wide-ranging applications in harmonic analysis. In a significant extension of this classical theory, Bekka \cite{Bekkaamenablity} introduced the notion of amenability for unitary representations of locally compact groups. This operator-theoretic perspective provides a unifying framework that not only recovers the classical case when applied to the left-regular representation but also allows for a broader treatment of amenability in the context of representation theory. Within this framework, a group $G$ is amenable if and only if its left-regular representation on $L^{2}(G)$ is amenable, and crucially, all unitary representations of such a group inherit this property. This characterization shifts the focus from global group properties to the behavior of representations in Hilbert spaces, allowing amenability to be studied in an operator-theoretic point of view. Moreover, several classical properties of amenable groups admit natural analogues in this setting, offering powerful new insights and generalizations. 

Motivated by this perspective, the present work develops an analogous theory of amenability for continuous unitary representations of locally compact groupoids. Our approach is based on the framework introduced in \cite{bos2011continuous}, in which unitary representations of groupoids are described in terms of Hilbert bundles over the unit space $G^{0}$. This framework differs from the classical theory developed by Renault \cite{renault2006groupoid}, where representations are formulated using measurable Hilbert bundles. There, Renault obtained a  bijective relation 
with the representations of the groupoid $C^{*}$-algebra. In the present approach, one does not require the existence of a quasi-invariant measure on $G^{0}$. Furthermore, the coefficients associated with these representations are continuous functions, which allows us to work entirely within the topological setting. We focus on representations for which the associated Hilbert bundles have non-zero fibers and admit countably generated continuous sections vanishing at infinity, forming a Hilbert $C_{0}(G^{0})$-module. This setting provides a natural generalization, allowing us to extend operator-theoretic notions of amenability to the groupoid context and explore new characterizations.

Hilbert–Schmidt and trace-class operators on Hilbert $C^{*}$-module, as developed in a recent work \cite{stern2021schatten}, play a central role in our analysis. Utilizing these operator classes, we establish several equivalent characterizations of amenability for unitary representations, drawing natural parallels with the classical theory for groups and existing results for groupoids. In particular, we prove that every continuous unitary representation of a second countable amenable groupoid is amenable, and that a groupoid is amenable if and only if its left-regular representation is amenable. Later, we discuss the amenability of the induced representation of $G$ introduced in \cite{sridharan2025induced}. We briefly touch upon the concept of properly amenable groupoids and properties of its representations.

In addition, we define a topological invariant mean associated with a representation, analogous to the notion of a topological invariant mean for groupoids. This construction is formulated in terms of operator-valued vector measures on the unit space. To facilitate this approach, we use structural properties of trace-class operators on Hilbert $C^{*}$-modules and briefly examine the corresponding dual space structure. We further investigate the relationship between the amenability of a representation and the amenability of its restriction to isotropy subgroups. In particular, we consider the HLS groupoid and establish an analogue of a result by Willet \cite[Lemma 2.4]{Willet}, relating the amenability of a representation of the HLS groupoid to that of its restriction to the parent group. This gives some important consequences on the relationship between the approximation of positive definite function, topological property(T) in the sense of \cite[Definition $3.6$]{dell2022topological} and amenability of the representation of HLS groupoid. 

The structure of the paper is as follows. In Section 2, we provide the necessary background on groupoids, amenable groupoids, Schatten class operators on countably generated Hilbert modules, and groupoid representations. Section 3 introduces the notion of amenable unitary representations and presents related results. The definition is similar to a definition of groupoid amenability given by  Renault\cite[p.$92$]{renault2006groupoid}. We show that a second-countable groupoid $G$ is amenable if and only if its left-regular representation is amenable. We also examine the connection between the amenability of the $G$- equivariant surjection $r_{G^{0}}:G/H \to G^{0}$ and the amenability of the induced representation of $G$ , $(ind_{H}^{G}(\sigma),\mu)$, for a unitary representation $\sigma$ of a closed wide subgroupoid $H$, where $\mu$ is an equivariant $r_{G^{0}}$-system. We provide a property of the representation of \textit{properly amenable} groupoid $G$. We show that a second-countable groupoid $G$ is properly amenable if and only if for every representation $\pi$, there exists a non- zero $K \in \mathcal{L}(C_{0}(G^{0},\mathcal{H}^{\pi}))$ such that its localization $K_{u}$ being a trace-class operator on $\mathcal{H}^{\pi}_{u}$, for every $u\in G^{0}$, the family $\{K_{u}\}_{u\in G^{0}}$ is equivariant  and the trace function defined by $Tr(K)(u)=Tr(K_{u})$ belongs to  $C_{b}(G^{0})$.
In Section 4, we define the concept of a topological invariant mean associated with a representation $\pi$ of $G$, and provide equivalent characterizations of amenable representations, drawing analogies with amenable groupoids. In Section 5, we show a representation-theoretic analogue of amenability of HLS groupoid and its parent group. We show that the representation of HLS groupoid associated to $(\Gamma, \{K_{n}\}_{n\in \mathbb{N}})$ is amenable if and only if its restriction to parent group $\Gamma$ is amenable. Later, we establish the existence of an amenable extension of an amenable representation of an isotropy subgroup for transitive groupoids admitting a continuous section of the range map.
\section{Preliminaries}
Let's start with some basics on groupoids.

 A groupoid is a set $G$ equipped with a partial product map $G^{2} \to G:(x,y) \to xy$, where $G^{2}\subseteq G\times G$ denotes the set of composable pairs, and an inverse map  $G \to G: x\to x^{-1}$,  satisfying the following conditions:
    \begin{enumerate}[(i)]
        \item $(x^{-1})^{-1}=x$,
        \item $(x,y),(y,z)\in G^{2}$ implies $(xy,z),(x,yz)\in G^{2}$ and $(xy)z=x(yz)$,
        \item $(x^{-1},x)\in G^{2}$ and if $(x,y)\in G^{2}$, then $x^{-1}(xy)=y$,
        \item $(x,x^{-1})\in G^{2}$ and if $(z,x)\in G^{2}$, then $(zx)x^{-1}=z.$
    \end{enumerate}

    A topological groupoid consists of a groupoid $G$ equipped with a topology that is compatible with its algebraic structure, in the sense that:
    \begin{enumerate}[(i)]
        \item the inverse map $G\to G: x\to x^{-1}$ is continuous,
    
        \item the product map $G^{2}\to G: (x,y)\to xy$ is continuous where $G^{2}$ is endowed with the subspace topology inherited from $G \times G$. 
    \end{enumerate}
     We restrict our attention to second countable locally compact, Hausdorff groupoids. The unit space $G^{0}\subseteq G$, equipped with the subspace topology, is itself a locally compact Hausdorff space. Moreover, the range and domain maps ( denoted $r,d:G \to G^{0}$ respectively) are continuous.
      A left Haar system for $G$ consists of a family  $\{\lambda^{u}: u\in G^{0}\}$ of positive radon measures on $G$ such that, 
\begin{enumerate}[(i)]
    \item the support of the measure $\lambda^{u}$ is $G^{u}$,
    \item for any $f\in C_{c}(G),u\to \lambda(f)(u):= \int f d\lambda^{u}$ is continuous and
    \item for any $x\in G$ and $f \in C_{c}(G)$,\[\int_{G^{d(x)}}f(xy)d\lambda^{d(x)}(y)=\int_{G^{r(x)}}f(y)d\lambda^{r(x)}(y).\]  
\end{enumerate}
According to \cite[Proposition $2.4$]{renault2006groupoid}, if $G$ is a locally compact groupoid with a left haar system, then the range map $r$ is an open map. More details on groupoids can be referred to \cite{renault2006groupoid,paterson2012groupoids}. When we talk about any subgroupoid, we assume that the restriction of range and domain maps are open under relative topology.

\textbf{Amenable Groupoid}:\cite[Definition $9.13, 9.14$]{williams2019tool} A second countable locally compact groupoid $G$ is amenable if the range map $r:G \to G^{0}$ is amenable, that is, there exist \textit{approximate invariant continuous mean} for $r$ which is a continuous family of probability measures $\{m_{i}\}$, $m_{i}=\{m_i^u\}_{u\in G^{0}}$, with support of $m_i^u$ is $G^{u}$,   such that $\| \gamma^{-1}\cdot m^{u}_{i}-m_{i}^{\gamma^{-1}\cdot u}\|_{1}\to 0$ uniformly on compact subsets of $G^{0}*G=\{(u,\gamma): u=r(\gamma)\}$. A locally compact groupoid with a haarsystem $\lambda$ is \textit{topologically amenable}, as defined in \cite[Definition $9.3$]{williams2019tool}, if there is a net $\{f_{i}\} \subset C_{c}(G)$ such that 
\begin{enumerate}[(i)]
    \item the function $u \to \lambda(|f_{i}|^{2})(u)=\int_{G}|f_{i}(\gamma)|^{2}d\lambda^{u}(\gamma)$ are uniformly bounded  and
    \item the functions $\gamma\to f_{i}* f^{*}_{i}(\gamma)$ converge to a constant function $1$ uniformly on compact sets in $G$, where $f^{*}(\gamma)=\overline{f(\gamma^{-1})}$.
\end{enumerate}
The proposition \cite[Proposition $2.2.13$]{MR1799683}\cite[Corollary $9.46$]{williams2019tool} shows that the two are equivalent on a locally compact groupoid.

We now define continuous representations of groupoids.

A continuous field of Hilbert spaces over $G^{0}$ consists of a family  $\{\mathcal{H}_{u}\}_{u\in G^{0}}$ of separable Hilbert spaces together with a set of vector fields $\Gamma \in \prod_{u\in G^{0}}\mathcal{H}_{u}$,  satisfying the following conditions:
\begin{enumerate}[(i)]
    \item $\Gamma$ is a complex linear subspace of $\prod_{u\in G^{0}}\mathcal{H}_u$.
    \item For every $u\in G^{0}$, the set $\xi(u)$ for $\xi \in \Gamma$ is dense in $\mathcal{H}_{u}$.
    \item For every $\xi \in \Gamma$, the function $u \to \|\xi(u)\|$ is continuous.
    \item Let $\xi \in \prod_{u\in G^{0}}\mathcal{H}_{u}$ be a vector field; if for every $u\in G^{0}$ and every $\epsilon >0$, there exists an $\xi'\in \Gamma$ such that $\|\xi(s) -\xi'(s)\| < \epsilon$ on a neighbourhood of $u$, then $\xi \in \Gamma$.
\end{enumerate}
Given such a field, we define a topology on the disjoint union $\mathcal{H}=\sqcup_{u\in G^{0}}H_{u}$, generated by the sets of the form 
\[U(V,\xi,\epsilon)=\{h\in \mathcal{E}:\|h-\xi(p(h))\|<\epsilon,\xi\in \Gamma,p(h)\in V\}.\]
 where $V$ is an open set in $G^{0}$, $\epsilon> 0$, and $p: \mathcal{H}\to G^{0}$ is the projection of the total  space $\mathcal{H}$ to base space $G^{0}$ such that fiber $p^{-1}(u) = \mathcal{H}_{u},u \in G^{0}$.Under this topology, the map $p$ is continuous, open, and surjective.

Endowed with the above topology,  $(\mathcal{H},\Gamma)$ is referred to as a continuous Hilbert bundle as defined in \cite[Section $13$, Theorem $13.18$]{fell1988representations}, and $\Gamma$ forms the space of continuous sections of the Hilbert bundle.

The space of continuous sections vanishing at infinity is denoted by $C_{0}(G^{0},\mathcal{H})$. The space $C_{0}(G^{0},\mathcal{H})$ is fiberwise dense  and forms a Banach space with respect to the supremum norm. It also forms a Hilbert module over $C_{0}(G^{0})$. 

A  continuous unitary  representation of groupoid $G$ is a double $(\mathcal{H}^{\pi},\pi)$, where $\mathcal{H}^{\pi}=\{\mathcal{H}^{\pi}_{u}\}_{u\in G^{0}}$ is a continuous Hilbert bundle over $G^{0}$ such that  
\begin{enumerate}[(i)]
    \item  $\pi(x) \in \mathcal{B}\left(\mathcal{H}^{\pi}_{d(x)}, \mathcal{H}^{\pi}_{r(x)}\right)$ is a unitary operator, for each $x \in G$,

\item $\pi(u)$ is the identity map on $\mathcal{H}^{\pi}_{u}$ for all $u \in G^{0}$,

\item $\pi(x) \pi(y)=\pi(x y)$ for all $(x, y) \in G^{2}$,

\item $\pi(x)^{-1}=\pi\left(x^{-1}\right)$ for all $x \in G$,

\item $x \to \pi_{\xi,\eta}(x)= \langle \pi(x) \xi(d(x)),\eta(r(x))\rangle$ is continuous for every $\eta,\xi \in C_{0}(G^{0},\mathcal{H}^{\pi})$.
\end{enumerate}

Suppose $(\mathcal{H}^{\pi},\pi)$ and $(\mathcal{H}^{\pi'},\pi')$ be two continuous representations of groupoid $G$, then $Mor(\pi,\pi')$ refers to the bundle morphisms $T:\mathcal{H}^{\pi} \to \mathcal{H}^{\pi'}$ such that 
\[ T_{r(x)}\pi(x)=\pi'(x)T_{d(x)},~~ x\in G.\]
Note that $Mor(\pi,\pi')$ is a vector space.

We say that two representations $\pi$ and $\pi'$ are unitarily equivalent, denoted $\pi \sim \pi'$, if there exist $T \in Mor(\pi,\pi')$ such that each $T_{u}$ is unitary operator. In this case $T$ will form a unitary  operators between the corresponding sections vanishing at infinity which form a Hilbert $C_{0}(G^{0})$-module.

More details can be referred to in \cite{bos2007groupoids,bos2011continuous}.

A function $\phi\in C(G)$ is said 
to be \emph{positive definite} if for all
$u\in G^{0}$ and all $f\in C_{c}(G)$ we have
\begin{equation*}
\iint
\phi(y^{-1}x)\,
f(y)\,
\overline{f(x)}
\,d\lambda^{u}(x)\,
d\lambda^{u}(y)
\ge 0.
\end{equation*}
It is also proved in \cite[Theorem $1$]{paterson2004fourier}, that $\phi$ is positive definite if and only if $\phi$ is a coefficient of the form
$\pi_{\xi,\xi}$ for some $G$-Hilbert bundle $\mathcal{H}^{\pi}$ and $\xi \in  C_{b}(G^{0},\mathcal{H}^{\pi}) $. A positive definite function $\phi$ is said to be associated with the representation $\pi$ if there exists a bounded continuous section $\xi \in C_{b}(G^{0},\mathcal{H}^{\pi})$ satisfying $\phi(x)=\pi_{\xi,\xi}(x).$

We briefly recall definitions and results on Schatten class operators $\mathcal{L}^{p}(\mathcal{E}_{A})$, where $\mathcal{E}_{A}$ is a countably generated right Hilbert module over a commutative $C^{*}$-algebra $A$; see \cite{stern2021schatten}. For an introduction to Hilbert $C^{*}$-modules, see \cite{lance1995hilbert}.
\begin{defi}
    The $p$-th Schatten class $\mathcal{L}^{p}(\mathcal{E}_{A})$ for $1\leq p< \infty$ is the space of all endomorphisms $T \in \mathcal{L}(\mathcal{E}_{A})$ for which the function $tr|T|^{p}:\widehat{A}\to \mathbb{R}\cup \{\infty\}, \chi \to tr|\chi_{*}T|^{p}$ lies in $A$.
    
\end{defi}

Now, we provide more results related to trace-class operators $\mathcal{L}^{1}(\mathcal{E}_{A})$ and  Hilbert-Schmidt operators $\mathcal{L}^{2}(\mathcal{E}_{A})$, which forms a right Hilbert $A$-module under the following innerproduct. 
\begin{defi}
    The pairing $\langle\cdot,\cdot\rangle_{2}:\mathcal{L}^{2}(\mathcal{E}_{A})\times \mathcal{L}^{2}(\mathcal{E}_{A})\to A$ is given by
    $$ \langle S, T\rangle_{2}= \frac{1}{4} \sum_{k\in \mathbb{Z}/4\mathbb{Z}} i^{k}tr|T+i^{k}S|^{2}.$$
\end{defi}
\begin{prop}
    For $S,T \in \mathcal{L}^{2}(\mathcal{E}_{A}) $ and a character $\chi$ of $A$ the pairing $\langle S, T\rangle_{2}$ satisfies $\chi(\langle S, T\rangle_{2})=tr((\chi_{*}S)^{*}\chi_{*}T).$ Moreover, if $\mathfrak{e}$ is a frame of $\mathcal{E}_{A}$, then the series $\sum_{i=1}^{\infty}\langle S\mathfrak{e}_{i},T\mathfrak{e}_{i}\rangle$ converges in norm to $\langle S,T\rangle_{2}$.
\end{prop}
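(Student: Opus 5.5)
The plan has two parts. First I would identify the fibre $\chi_{*}$ of an operator with its localisation. Then the first claim follows from polarisation, and the second from summing a frame expansion fibre by fibre and upgrading pointwise convergence to norm convergence.

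\emph{Fibres.} Since $A$ is commutative, write $A=C_{0}(X)$ with $X=\widehat{A}$. A character $\chi=\mathrm{ev}_{u}$ gives the localised Hilbert space $\chi_{*}\mathcal{E}_{A}=\mathcal{E}_{A}\otimes_{A}\mathbb{C}_{\chi}$. Every $T\in\mathcal{L}(\mathcal{E}_{A})$ induces a bounded operator $\chi_{*}T$ on this space, and $T\mapsto\chi_{*}T$ is a $*$-homomorphism. In particular $(\chi_{*}T)^{*}=\chi_{*}(T^{*})$ and $|\chi_{*}T|^{2}=\chi_{*}(T^{*}T)$.

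\emph{First claim.} By definition $\chi\big(tr|T+i^{k}S|^{2}\big)=tr|\chi_{*}T+i^{k}\chi_{*}S|^{2}$. Here $\chi_{*}S$ and $\chi_{*}T$ are Hilbert–Schmidt on $\chi_{*}\mathcal{E}_{A}$, because $tr|\chi_{*}S|^{2}=\chi(tr|S|^{2})<\infty$. Applying $\chi$ to the defining sum for $\langle S,T\rangle_{2}$ gives
\[
\chi(\langle S,T\rangle_{2})=\tfrac14\sum_{k}i^{k}\,tr|\chi_{*}T+i^{k}\chi_{*}S|^{2}.
\]
The classical polarisation identity for the Hilbert–Schmidt inner product $(a,b)\mapsto tr(a^{*}b)$ then identifies the right-hand side with $tr((\chi_{*}S)^{*}\chi_{*}T)$. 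I still need to check that the sign/conjugation convention matches the stated order $(\chi_{*}S)^{*}\chi_{*}T$, which is conjugate-linear in $S$ and linear in $T$. Expanding $|a+i^{k}b|^{2}$ gives
\[
\tfrac14\sum_{k} i^{k}\big(tr|a|^{2}+tr|b|^{2}+i^{k}tr(a^{*}b)+\overline{i^{k}}\,tr(b^{*}a)\big).
\]
Only the $\overline{i^{k}}$ cross term survives, so with $a=\chi_{*}T$ and $b=\chi_{*}S$ the sum equals $tr(b^{*}a)=tr((\chi_{*}S)^{*}\chi_{*}T)$, as required.

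\emph{Second claim.} Let $\mathfrak{e}=(\mathfrak{e}_{i})$ be a frame, so $\sum_{i}\mathfrak{e}_{i}\langle\mathfrak{e}_{i},x\rangle=x$. Under $\chi_{*}$ the images $\chi_{*}\mathfrak{e}_{i}$ form a Parseval frame of the Hilbert space $\chi_{*}\mathcal{E}_{A}$. For Hilbert–Schmidt $a,b$ and any Parseval frame $(f_{i})$, one has $\sum_{i}\langle af_{i},bf_{i}\rangle=tr(a^{*}b)$. To see this, write $\langle af_{i},bf_{i}\rangle=\langle f_{i},a^{*}bf_{i}\rangle$ and note that for trace-class $c$, $\sum_{i}\langle f_{i},cf_{i}\rangle=tr(c)$, since $f_{i}$ is the image of an orthonormal basis under a coisometry. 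Combined with the first claim, this gives
\[
\chi\Big(\sum_{i}\langle S\mathfrak{e}_{i},T\mathfrak{e}_{i}\rangle\Big)=\chi(\langle S,T\rangle_{2})
\]
for every $\chi$, so the series converges pointwise on $X$ to the correct limit.

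\emph{Main obstacle.} The hard step is upgrading this pointwise convergence to norm convergence in $A=C_{0}(X)$. I would first handle the diagonal $S=T$. There the partial sums $\sum_{i\le n}\langle T\mathfrak{e}_{i},T\mathfrak{e}_{i}\rangle$ are an increasing sequence of nonnegative functions in $C_{0}(X)$ converging pointwise to the continuous function $tr|T|^{2}\in C_{0}(X)$. Dini's theorem gives uniform convergence on compacta. Uniformity at infinity comes from $0\le tr|T|^{2}-\text{partial sum}\le tr|T|^{2}$, which is small off a compact set. Hence the diagonal series converges in norm. For general $S,T$, the Cauchy–Schwarz inequality in $\mathbb{C}$, applied pointwise to the tail $\sum_{i>n}$, bounds the tail by
\[
\Big(\sum_{i>n}\|S\mathfrak{e}_{i}\|^{2}\Big)^{1/2}\Big(\sum_{i>n}\|T\mathfrak{e}_{i}\|^{2}\Big)^{1/2},
\]
evaluated at each point. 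Both factors tend to $0$ uniformly by the diagonal case, so the series is norm-Cauchy and its limit is $\langle S,T\rangle_{2}$.
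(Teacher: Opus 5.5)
Your proof is correct. The paper gives no proof of this proposition; it is recalled from \cite{stern2021schatten} as background, so there is no in-paper argument to compare against. Your fibrewise polarisation is right, including the check that only the $\overline{i^{k}}$ cross term survives. The two facts you state without proof are standard. First, applying $\chi$ to the norm-convergent identity $\langle x,x\rangle=\sum_{i}\langle x,\mathfrak{e}_{i}\rangle\langle\mathfrak{e}_{i},x\rangle$ gives Parseval's identity for $(\chi_{*}\mathfrak{e}_{i})$ on the dense image of $\mathcal{E}_{A}$ in $\chi_{*}\mathcal{E}_{A}$, and hence everywhere. Second, writing $f_{i}=\theta^{*}e_{i}$ with $\theta$ the isometric analysis operator gives $\sum_{i}\langle f_{i},cf_{i}\rangle=tr(\theta c\theta^{*})=tr(c)$.

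For the off-diagonal norm convergence there is a slightly shorter alternative: polarise the series itself. Since $\langle Sx,Tx\rangle=\frac{1}{4}\sum_{k}i^{k}\langle (T+i^{k}S)x,(T+i^{k}S)x\rangle$, each partial sum of $\sum_{i}\langle S\mathfrak{e}_{i},T\mathfrak{e}_{i}\rangle$ is the same fixed linear combination of partial sums of four diagonal series. By your Dini argument these converge in norm to $tr|T+i^{k}S|^{2}$, so the limit is $\langle S,T\rangle_{2}$ directly from its definition, without passing through characters. Your diagonal case is also just the trace-class frame formula stated next in the paper, applied to $T^{*}T\in\mathcal{L}^{1}(\mathcal{E}_{A})$.

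That route needs $T+i^{k}S\in\mathcal{L}^{2}(\mathcal{E}_{A})$, which is implicit in the definition of the pairing anyway. Your Cauchy--Schwarz tail estimate needs only $S,T\in\mathcal{L}^{2}(\mathcal{E}_{A})$ separately. So it shows on its own that $\chi\mapsto tr((\chi_{*}S)^{*}\chi_{*}T)$ is a norm limit of elements of $A$, and hence lies in $A$.
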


\begin{prop}
    Let $\mathfrak{e}$ be  a frame on $\mathcal{E}_{A}$. Then, the series $\sum_{i}\langle \mathfrak{e}_{i}, T\mathfrak{e}_{i}\rangle$ converges in norm to $trT$.
\end{prop}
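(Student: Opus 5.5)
Let $\mathfrak{e}=(\mathfrak{e}_i)$ be a frame of $\mathcal{E}_A$, so that $\sum_i \mathfrak{e}_i\langle \mathfrak{e}_i,x\rangle$ converges to $x$ for every $x\in\mathcal{E}_A$, and let $T\in\mathcal{L}^1(\mathcal{E}_A)$. The plan is to reduce the statement to the previous proposition on the Hilbert--Schmidt pairing. Write the polar decomposition on each fibre, or more cleanly factor $T$ as a product of two Hilbert--Schmidt operators: $T=S_1^{*}S_2$ with $S_1,S_2\in\mathcal{L}^2(\mathcal{E}_A)$. Then
\[
\sum_i\langle\mathfrak{e}_i,T\mathfrak{e}_i\rangle=\sum_i\langle S_1\mathfrak{e}_i,S_2\mathfrak{e}_i\rangle .
\]
By the preceding proposition, this series converges in norm to $\langle S_1,S_2\rangle_2$. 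Evaluating at a character $\chi$, we get
\[
\chi(\langle S_1,S_2\rangle_2)=tr((\chi_*S_1)^*\chi_*S_2)=tr(\chi_*T).
\]
Hence $\langle S_1,S_2\rangle_2=trT$ as elements of $A\cong C_0(\widehat A)$, which gives the claim.

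The main obstacle is producing the factorization $T=S_1^*S_2$ inside $\mathcal{L}(\mathcal{E}_A)$ with both factors in $\mathcal{L}^2$. A polar decomposition need not exist in adjointable operators on a Hilbert module, so I would avoid it as follows.

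First treat positive $T$. Take $S_1=S_2=T^{1/2}$, defined by continuous functional calculus in the $C^*$-algebra $\mathcal{L}(\mathcal{E}_A)$. Then $|T^{1/2}|^2=T$, so $tr|T^{1/2}|^2=trT\in A$, and hence $T^{1/2}\in\mathcal{L}^2(\mathcal{E}_A)$. The positive case then follows directly from the previous proposition.

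For general $T$, write $T$ as a combination of four positive trace-class operators. If we know that the real and imaginary parts of a trace-class operator, and their positive and negative parts, remain in $\mathcal{L}^1$, we get this decomposition. Fibrewise this holds by classical Schatten theory. The remaining issue is continuity of $\chi\mapsto tr(\chi_*T_\pm)$, i.e. that these functions lie in $A$.

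If that continuity is hard to establish, I would instead argue directly. First show that the partial sums $\sum_{i\le n}\langle\mathfrak{e}_i,T\mathfrak{e}_i\rangle$ converge pointwise at each $\chi$ to $tr(\chi_*T)$. This holds because $(\chi_*\mathfrak{e}_i)$ is a Parseval frame of the fibre Hilbert space and $\chi_*T$ is trace class there. Then control the uniform tail by a domination argument. For positive $T$ the tails are positive and decrease pointwise to $0$, and they are continuous functions vanishing at infinity, so Dini's theorem gives uniform convergence. The general case then follows by linearity.
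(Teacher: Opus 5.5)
The paper gives no proof of this statement; it is quoted from \cite{stern2021schatten}, so your argument has to stand on its own. Your positive case is correct, both via $T^{1/2}\in\mathcal{L}^{2}(\mathcal{E}_{A})$ with the preceding proposition and via Dini on the one-point compactification of $\widehat{A}$. The gap is the non-positive case. Both of your routes end with ``reduce to positive trace-class operators by linearity'', and as it stands this reduction is circular.

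For self-adjoint $T\in\mathcal{L}^{1}(\mathcal{E}_{A})$ one has fibrewise $tr(\chi_{*}T_{+})=\tfrac{1}{2}\bigl(tr|\chi_{*}T|+tr(\chi_{*}T)\bigr)$. Since $tr|T|\in A$ is given, $T_{+}\in\mathcal{L}^{1}(\mathcal{E}_{A})$ is \emph{equivalent} to $trT\in A$, which is part of what the proposition asserts. Dini cannot be run on $T$ itself either: its partial sums are not monotone, and the limit is not yet known to be continuous.

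Your route only closes if you import the nontrivial fact that $\mathcal{L}^{1}(\mathcal{E}_{A})$ is a linear subspace of $\mathcal{L}(\mathcal{E}_{A})$. Lower semicontinuity of $tr|S+T|$ is automatic, but upper semicontinuity is not, and your argument does not supply it. Granting it, $tr|T^{*}|=tr|T|$ puts $T^{*}$, hence $\operatorname{Re}T$ and $\operatorname{Im}T$, in $\mathcal{L}^{1}$. Then $R_{\pm}=\tfrac12(|R|\pm R)\in\mathcal{L}^{1}$ for self-adjoint $R$, and your continuity worry disappears because $trP=tr|P|$ for positive $P$.

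A self-contained repair is the domination argument you mention but only carry out for positive $T$. In each fibre take the Hilbert-space polar decomposition $\chi_{*}T=V_{\chi}|\chi_{*}T|$, which does exist there, so that $V_{\chi}|\chi_{*}T|V_{\chi}^{*}=|\chi_{*}T^{*}|$. Since localization commutes with continuous functional calculus, Cauchy--Schwarz gives, for $m\le n$,
\[
\Bigl|\sum_{i=m}^{n}\chi\bigl(\langle\mathfrak{e}_{i},T\mathfrak{e}_{i}\rangle\bigr)\Bigr|\le\Bigl(\sum_{i=m}^{n}\chi\bigl(\langle\mathfrak{e}_{i},|T^{*}|\mathfrak{e}_{i}\rangle\bigr)\Bigr)^{1/2}\Bigl(\sum_{i=m}^{n}\chi\bigl(\langle\mathfrak{e}_{i},|T|\mathfrak{e}_{i}\rangle\bigr)\Bigr)^{1/2}.
\]
Here $|T|$ and $|T^{*}|=(TT^{*})^{1/2}$ are positive elements of $\mathcal{L}(\mathcal{E}_{A})$ with $tr|T^{*}|=tr|T|\in A$. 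So both lie in $\mathcal{L}^{1}(\mathcal{E}_{A})$ directly from the definition, and your positive case makes both tails small uniformly in $\chi$. Hence the series is uniformly Cauchy and converges in $A$. Its limit equals $tr(\chi_{*}T)$ at every $\chi$ by your Parseval-frame computation. This yields $trT\in A$ and norm convergence at once, with no factorization and no decomposition into positive parts.
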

Many additional results on this topic can be found in \cite{stern2021schatten}. Throughout this paper, we work with left Hilbert modules, and the inner product is taken accordingly.
\section{Amenable unitary representation and amenable groupoids }

In this section, we define the notion of amenability of a continuous unitary representation and prove some of its properties. Additionally, we discuss its connection to groupoid amenability. Throughout the paper, we consider representations for which the associated Hilbert bundles have non-zero fibers and  admit countably generated continuous sections that form a left Hilbert $C_{0}(G^{0})$-module. 

Let $G$ be a locally compact groupoid and let $\pi$ be a continuous unitary representation. Suppose that the left Hilbert $C_{0}(G^{0})$-module of sections $C_{0}(G^{0},\mathcal{H}^{\pi})$ is countably generated. Denote the space of adjointable operators on this Hilbert module by $\mathcal{L}(C_{0}(G^{0},\mathcal{H}^{\pi}))$. For $T \in \mathcal{L}(C_{0}(G^{0},\mathcal{H}^{\pi}))$ and $u \in G^{0}$, we denote by $T_{u}$ the corresponding operator on the fiber $\mathcal{H}^{\pi}_{u}$ by localisation of $T$.

Following \cite[Definition 3.2]{stern2021schatten}, we let $\mathcal{L}^{2}(C_{0}(G^{0},\mathcal{H}^{\pi}))$ denote the corresponding Hilbert-Schmidt operators. $\mathcal{L}^{2}(C_{0}(G^{0},\mathcal{H}^{\pi}))$ gives rise to a continuous field of Hilbert-Schmidt operators, denoted $HS(\mathcal{H}^{\pi})$, whose fibers are $HS(\mathcal{H}^{\pi}_{u})$, the space of Hilbert-Schmidt operators on each fiber Hilbert space $\mathcal{H}^{\pi}_{u}$ for $ u\in G^{0}$. The space $\mathcal{L}^{2}(C_{0}(G^{0},\mathcal{H}^{\pi}))$ serves as an analogue of $C_{0}(G^{0},L^{2}(G,\lambda))$ in the groupoid setting.  Since the finite rank operators on a Hilbert space is dense in Hilbert Schmidt class of operators, we can easily see that the module generated by finite rank adjointable operators on $C_{0}(G^{0},\mathcal{H}^{\pi})$ is dense in  $\mathcal{L}^{2}(C_{0}(G^{0},\mathcal{H}^{\pi}))$ using \cite[Theorem $4.6$]{lazar2018selection}.

The left regular representation of $G$ on $L^{2}(G,\lambda)=\{L^{2}(G^{u},\lambda^{u})\}_{u\in G^{0}}$  by left translation is replaced in this setting by a representation, denoted  $(HS(\mathcal{H}^{\pi}),L)$, given by:
$$ L(x) T_{d(x)}=\pi(x)T_{d(x)}\pi(x^{-1}),~~ x\in G,~~T_{d(x)}\in HS(\mathcal{H}^{\pi}_{d(x)}).$$
For convenience we sometimes denote $L(x)T_{d(x)}=x\cdot T_{d(x)}$.

For any $S,T \in \mathcal{L}^{2}(C_{0}(G^{0},\mathcal{H}^{\pi})) $, the map 
 $x\to \langle \pi(x)S_{d(x)}\pi(x^{-1}),T_{r(x)}\rangle$
 is easily seen to be continuous as it is continuous with finite rank adjointable operators on $C_{0}(G^{0},\mathcal{H}^{\pi})$.

The space  of trace-class operators on $C_{0}(G^{0},\mathcal{H}^{\pi})$, denoted $\mathcal{L}^{1}(C_{0}(G^{0},\mathcal{H}^{\pi})) $, form a Finsler module over $C_{0}(G^{0})$ as defined in \cite{phillips1998modules}. With this, one can construct a bundle of trace-class operators $TC(\mathcal{H}^{\pi})$ \cite[Theorem $8(b)$]{phillips1998modules}, with fibers $TC(\mathcal{H}^{\pi}_{u})$ being the space of trace-class operators on $\mathcal{H}^{\pi}_{u}$ for $u\in G^{0}$. Here also, the map
 $$G \to TC(\mathcal{H}^{\pi}),~ x \to \pi(x)S_{d(x)}\pi(x^{-1})$$ 
 is continuous.

Now we define the amenability of a continuous unitary representation. It is analogous to a definition of groupoid amenability given by    Renault\cite[p.$92$]{renault2006groupoid}. It is also called topologically amenable in \cite[Definition $9.3$]{williams2019tool}.
\begin{defi}
  A continuous unitary representation $\pi$ is called amenable if there exists a net $\{T^{i}\}_{i\in I}$ of Hilbert-Schmidt operators on $C_{0}(G^{0},\mathcal{H}^{\pi})$ such that 
  \begin{enumerate}[i)]
      \item  $\|T^{i}\|_{2}\leq 1$ and $\|T^{i}_{u}\|_{2}\to 1$ uniformly on every compact subsets of $G^{0}$.
      \item $\langle x\cdot T^{i}_{d(x)},T^{i}_{r(x)}\rangle \to 1$ uniformly on compact subsets of $G$.
  \end{enumerate} 
\end{defi}
Note that we can replace the net with a sequence whenever the groupoid is $\sigma$- compact. 

The following results can be easily verified. We refer \cite[Definition $15$]{bos2011continuous} for the definition of conjugate representation $\overline{\pi}$ of $\pi$.
\begin{prop}
    Let $G$ be a locally compact groupoid.
    \begin{enumerate}[(i)]
        \item A continuous unitary representation of $G$ which is unitarily equivalent to an amenable representation is amenable.
      
      \item If $\pi$ is amenable, then so is its conjugate representation $\bar{\pi}$.
    \end{enumerate}
\end{prop}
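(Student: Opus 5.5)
For (i), the plan is to transport the net of Hilbert–Schmidt operators along the intertwining unitary. Let $\pi$ be amenable with net $\{T^{i}\}$, and let $W\in Mor(\pi,\pi')$ be such that each $W_{u}$ is unitary. As noted after the definition of unitary equivalence, $W$ then gives a unitary $C_{0}(G^{0})$-module map $C_{0}(G^{0},\mathcal{H}^{\pi})\to C_{0}(G^{0},\mathcal{H}^{\pi'})$. Define $S^{i}=WT^{i}W^{*}$, which is adjointable on $C_{0}(G^{0},\mathcal{H}^{\pi'})$, with localisations $S^{i}_{u}=W_{u}T^{i}_{u}W_{u}^{*}$.

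Step 1 is to show $S^{i}\in\mathcal{L}^{2}$. Fiberwise $|S^{i}_{u}|^{2}=W_{u}|T^{i}_{u}|^{2}W_{u}^{*}$, so $tr|S^{i}_{u}|^{2}=tr|T^{i}_{u}|^{2}$ for every $u$. Hence the function $tr|S^{i}|^{2}$ equals $tr|T^{i}|^{2}$, which lies in $C_{0}(G^{0})$. The same identity gives $\|S^{i}\|_{2}=\|T^{i}\|_{2}\le 1$ and $\|S^{i}_{u}\|_{2}=\|T^{i}_{u}\|_{2}$, so condition (i) transfers verbatim.

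Step 2 is condition (ii). Using $W_{r(x)}\pi(x)=\pi'(x)W_{d(x)}$, and hence $\pi(x^{-1})W_{d(x)}^{*}=W_{r(x)}^{*}\pi'(x^{-1})$, we get
\[
\pi'(x)S^{i}_{d(x)}\pi'(x^{-1})=W_{r(x)}\bigl(\pi(x)T^{i}_{d(x)}\pi(x^{-1})\bigr)W_{r(x)}^{*}.
\]
Conjugation by a unitary preserves the Hilbert–Schmidt inner product, so
\[
\langle x\cdot S^{i}_{d(x)},S^{i}_{r(x)}\rangle=\langle x\cdot T^{i}_{d(x)},T^{i}_{r(x)}\rangle ,
\]
and uniform convergence to $1$ on compacts follows. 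The only point needing care, and the likely obstacle, is justifying that $S^{i}$ is genuinely in $\mathcal{L}^{2}$ in Stern's sense. This means identifying the localisation $\chi_{*}(WT^{i}W^{*})$ at the character $\chi=ev_{u}$ with $W_{u}T^{i}_{u}W_{u}^{*}$. I would do this via the frame description: if $\mathfrak{e}$ is a frame for $C_{0}(G^{0},\mathcal{H}^{\pi})$, then $W\mathfrak{e}$ is a frame for $C_{0}(G^{0},\mathcal{H}^{\pi'})$, and the Proposition on $\sum_{k}\langle S\mathfrak{e}_{k},T\mathfrak{e}_{k}\rangle$ gives $\langle S^{i},S^{i}\rangle_{2}=\langle T^{i},T^{i}\rangle_{2}$ directly.

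For (ii), the plan uses the conjugate bundle $\overline{\mathcal{H}^{\pi}}$ with fibers $\overline{\mathcal{H}^{\pi}_{u}}$ and $\overline{\pi}(x)\bar\xi=\overline{\pi(x)\xi}$, following \cite[Definition 15]{bos2011continuous}. Given $T^{i}$, define $\overline{T^{i}}$ by $\overline{T^{i}}\,\bar\xi=\overline{T^{i}\xi}$ on sections; this is an adjointable operator on the conjugate module, with adjoint $\overline{T^{i*}}$. Since $|\overline{T}_{u}|^{2}=\overline{|T_{u}|^{2}}$ has the same eigenvalues, $tr|\overline{T^{i}}|^{2}=tr|T^{i}|^{2}$, so (i) holds. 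Moreover
\[
\overline{\pi}(x)\overline{T^{i}_{d(x)}}\,\overline{\pi}(x^{-1})=\overline{\pi(x)T^{i}_{d(x)}\pi(x^{-1})},
\]
and $\langle\overline{A},\overline{B}\rangle_{HS}=\overline{\langle A,B\rangle_{HS}}$. Thus the coefficient in (ii) for $\overline{\pi}$ is the complex conjugate of that for $\pi$, and it converges to $\bar 1=1$ uniformly on compacts. The mild subtlety here is the left-module convention: the conjugate module is a $C_{0}(G^{0})$-module via $f\cdot\bar\xi=\overline{\bar f\xi}$, and one must check that it remains countably generated and that $\overline{T^{i}}$ is module-linear. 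Both follow by conjugating a frame.
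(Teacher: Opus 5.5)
Your proposal is correct. It is the routine verification that the paper omits, since the paper only says these facts ``can be easily verified'': conjugating the Hilbert--Schmidt net by the intertwining unitary for (i), and passing to $\overline{T^{i}}$ on the conjugate module for (ii), leaves the coefficients $\langle x\cdot T^{i}_{d(x)},T^{i}_{r(x)}\rangle$ unchanged or complex-conjugated. Two cosmetic fixes: the intermediate intertwining identity should read $\pi(x^{-1})W_{r(x)}^{*}=W_{d(x)}^{*}\pi'(x^{-1})$ (your displayed conclusion is still right), and $S^{i}\in\mathcal{L}^{2}$ follows directly from the definition via $(WT^{i}W^{*})_{u}=W_{u}T^{i}_{u}W_{u}^{*}$, so you do not need the frame proposition, which in any case presupposes membership in $\mathcal{L}^{2}$.
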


Next, we will show the relationship between the notion of amenability of a groupoid with the amenability of its unitary representations.

Given $T \in \mathcal{L}^{1}(C_{0}(G^{0},\mathcal{H}^{\pi}))$, and $f \in C_{c}(G)$, we can define a weak integral
\begin{equation}
    (f\cdot T)_{u} = \int_{G} f(y) \pi(y)T_{d(y)}\pi(y^{-1})d\lambda^{u}(y), ~~u\in G^{0}.
\end{equation}  
It is easily verifiable that $f\cdot T \in \mathcal{L}^{1}(C_{0}(G^{0},\mathcal{H}^{\pi}))$. Indeed, each fiber $(f\cdot T)_{u}$ can be expressed as a finite linear combination of operators of the form $(f_{i}\cdot T_{j})_{u}$, where $f_{i}\in C_{c}(G)$ with $f_{i}\geq 0$ and $T_{j}\in \mathcal{L}^{1}(C_{0}(G^{0},\mathcal{H}^{\pi}))$ with $T_{j}\geq 0$. By the continuity of the Haar system, each $f_{i}\cdot T_{j}$ belongs to $\mathcal{L}^{1}(C_{0}(G^{0},\mathcal{H}^{\pi}))$. Thus, it follows that $f\cdot T \in \mathcal{L}^{1}(C_{0}(G^{0},\mathcal{H}^{\pi}))$. Also with this integral $\mathcal{L}^{1}(C_{0}(G^{0},\mathcal{H}^{\pi}))$ form a Banach $C_{0}(G^{0},L^{1}(G,\lambda))$-module.
\begin{lem}
    Let $(\mathcal{H}^{\pi},\pi)$ be a continuous unitary representation of groupoid $G$, with sections $C_{0}(G^{0},\mathcal{H}^{\pi})$. Then  $\pi$ is amenable if and only if there exists a net of positive trace class operators $\{S^{i}\}_{i\in I}, \|S^{i}\|_{1}\leq 1$, such that $\|S^{i}_{u}\|_{1} \to 1$ uniformly on compact subsets of  $G^{0}$ and $\|x\cdot S^{i}_{d(x)}-S^{i}_{r(x)}\|_{1} \to 0$ uniformly on compact subsets of $G$.
\end{lem}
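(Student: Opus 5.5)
The argument follows the classical Hilbert–Schmidt/trace-class correspondence used for groups and for topologically amenable groupoids. The two directions pass through the maps $T\mapsto T^{*}T$ and $S\mapsto S^{1/2}$, applied fiberwise. The link between them is the Powers–Størmer type inequalities on each fiber Hilbert space $\mathcal{H}^{\pi}_{u}$:
\[
\|A^{*}A-B^{*}B\|_{1}\le \|A-B\|_{2}\,\|A\|_{2}+\|A-B\|_{2}\,\|B\|_{2},
\qquad
\|S^{1/2}-R^{1/2}\|_{2}^{2}\le \|S-R\|_{1},
\]
valid for Hilbert–Schmidt operators $A,B$ and positive trace-class operators $S,R$. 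Conjugation by the unitary $\pi(x)$ commutes with these functional calculi: $x\cdot(T^{*}T)=(x\cdot T)^{*}(x\cdot T)$ and $(x\cdot S)^{1/2}=x\cdot S^{1/2}$. This makes fiberwise estimates transfer directly to the desired statements.

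\emph{Necessity.} Let $\{T^{i}\}$ be as in the definition of amenability, and set $S^{i}=(T^{i})^{*}T^{i}$. This is a positive element of $\mathcal{L}^{1}(C_{0}(G^{0},\mathcal{H}^{\pi}))$, since its trace function $u\mapsto \|T^{i}_{u}\|_{2}^{2}$ lies in $C_{0}(G^{0})$; this is exactly the definition of $\mathcal{L}^{2}$. Moreover $\|S^{i}_{u}\|_{1}=\|T^{i}_{u}\|_{2}^{2}\le 1$, and this tends to $1$ uniformly on compacta. For $x\in G$, write $A=x\cdot T^{i}_{d(x)}$ and $B=T^{i}_{r(x)}$. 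Then
\[
\|A-B\|_{2}^{2}=\|T^{i}_{d(x)}\|_{2}^{2}+\|T^{i}_{r(x)}\|_{2}^{2}-2\,\mathrm{Re}\langle A,B\rangle .
\]
By the definition, this tends to $0$ uniformly for $x$ in a compact $K\subseteq G$, using that $d(K)$ and $r(K)$ are compact. The first inequality above then gives $\|x\cdot S^{i}_{d(x)}-S^{i}_{r(x)}\|_{1}\le 2\|A-B\|_{2}\to 0$ uniformly on $K$.

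\emph{Sufficiency.} Given positive trace-class operators $S^{i}$ as in the statement, set $T^{i}=(S^{i})^{1/2}$. The main point needing care is that $T^{i}$ is a genuine adjointable operator on the Hilbert module belonging to $\mathcal{L}^{2}$, rather than merely a field of fiberwise Hilbert–Schmidt operators. For this, I would use continuous functional calculus in the $C^{*}$-algebra $\mathcal{L}(C_{0}(G^{0},\mathcal{H}^{\pi}))$: the square root of the positive element $S^{i}$ is adjointable, and its localisation at $u$ is $(S^{i}_{u})^{1/2}$, because localisation is a $*$-homomorphism. Its Hilbert–Schmidt trace function is $u\mapsto \mathrm{tr}\,S^{i}_{u}=\mathrm{tr}\,S^{i}(u)\in C_{0}(G^{0})$, so $T^{i}\in\mathcal{L}^{2}$ by the Schatten class definition with $p=2$. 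This gives $\|T^{i}_{u}\|_{2}^{2}=\|S^{i}_{u}\|_{1}$, which is at most $1$ and tends to $1$ uniformly on compacta. For the second condition, expand
\[
\bigl|\langle x\cdot T^{i}_{d(x)},T^{i}_{r(x)}\rangle-1\bigr|
\le \tfrac12\|x\cdot T^{i}_{d(x)}-T^{i}_{r(x)}\|_{2}^{2}
+\tfrac12\bigl|\|T^{i}_{d(x)}\|_{2}^{2}+\|T^{i}_{r(x)}\|_{2}^{2}-2\bigr|
\]
in real part; the imaginary part is controlled by Cauchy–Schwarz in the same way. By the second inequality, $\|x\cdot T^{i}_{d(x)}-T^{i}_{r(x)}\|_{2}^{2}\le\|x\cdot S^{i}_{d(x)}-S^{i}_{r(x)}\|_{1}$, so both terms tend to $0$ uniformly on compact subsets of $G$.

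The main obstacle is therefore only this module-level bookkeeping: ensuring that the fiberwise constructions $(T^{*}T)$ and $S^{1/2}$ correspond to elements of $\mathcal{L}^{1}$, respectively $\mathcal{L}^{2}$, of the countably generated module. This is handled by functional calculus in $\mathcal{L}(C_{0}(G^{0},\mathcal{H}^{\pi}))$ together with the characterisation of the Schatten classes via continuity of the trace functions, taken from \cite{stern2021schatten}.
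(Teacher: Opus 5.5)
Your proposal is correct and follows essentially the paper's route: $S^{i}=(T^{i})^{*}T^{i}=|T^{i}|^{2}$ in one direction and $T^{i}=(S^{i})^{1/2}$ in the other, with fiberwise estimates from the Powers--St{\o}rmer-type inequalities (Bekka's Lemma 4.2). The paper reaches $|T^{j}|^{2}$ by first bounding $\|x\cdot|T^{j}_{d(x)}|-|T^{j}_{r(x)}|\|_{2}$ and applying the inequality twice, whereas your bound $\|A^{*}A-B^{*}B\|_{1}\le\|A-B\|_{2}(\|A\|_{2}+\|B\|_{2})$ does it in one step. You also make explicit, via functional calculus in $\mathcal{L}(C_{0}(G^{0},\mathcal{H}^{\pi}))$ and the trace-function definition of the Schatten classes, the membership in $\mathcal{L}^{1}$ and $\mathcal{L}^{2}$ that the paper leaves implicit.
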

\begin{proof}
    Suppose $\pi$ is amenable and let $\{T^{i}\}$ be the net of Hilbert-Schmidt class operators as in the definition. Let $K$ be a compact subset of $G$ and $\varepsilon >0$. There exists a Hilbert-Schmidt operator $T^{j}$ such that 
    $$|\langle x\cdot T_{d(x)}^{j},T_{r(x)}^{j}\rangle -1|< \varepsilon^{4}/32$$ and $$||T_{u}^{j}||_{2}> 1-\varepsilon^{4}/32$$ for every $x\in K$ and $u \in C$, where $C$ is a compact subset of $G^{0}$ containing $r(K)$ and $d(K)$. Hence,
    $$ \left\|x\cdot T_{d(x)}^{j}-T^{j}_{r(x)}\right\|_{2}^{2}\leq \varepsilon^{4}/8,~~~x\in K.$$
    Using \cite[Lemma $4.2$]{Bekkaamenablity}, we can see that,
    $$ \left\|x\cdot |T_{d(x)}^{j}|-|T^{j}_{r(x)}|\right\|_{2}^{2}\leq \varepsilon^{2}/4, ~~x\in K. $$

    Defining $S^{j}= |T^{j}|^{2}$ and again using \cite[Lemma $4.2$]{Bekkaamenablity}, we get the required trace class operator with $\|x\cdot S^{j}_{d(x)}-S^{j}_{r(x)}\|_{1} <\varepsilon,~~x\in K$.

     For converse, define $T^{j}= (S^{j})^{\frac{1}{2}}$ and using the inequality $(ii)$ of \cite[Lemma $4.2$]{Bekkaamenablity}, and Cauchy-Schwartz inequality, we get the required Hilbert-Schmidt operators.
\end{proof}
If $\pi$ is a representation of $G$ and $H$ be a closed subgroupoid of $G$, then $\pi_{|_{H}}$ is a representation of $H$ whose corresponding Hilbert bundle over unit space $H^{0}$ is the reduction of $\mathcal{H}^{\pi}$ to $H^{0}$, denoted $\mathcal{H}^{\pi}_{|_{H^{0}}}$. More details can be referred to in \cite[Section $13.7$]{fell1988representations}.
\begin{prop}
 If $\pi$ is an amenable representation of $G$  and $H$ a closed subgroupoid of $G$, then the restriction $\pi_{|_{H}}$ is an amenable representation of $H$.
\end{prop}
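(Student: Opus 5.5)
The plan is to restrict the net of Hilbert--Schmidt operators witnessing amenability of $\pi$ from $G^{0}$ to the closed subset $H^{0}$. Let $\{T^{i}\}_{i\in I}$ be as in the definition of amenability for $\pi$. For each $i$, the family $\{T^{i}_{u}\}_{u\in H^{0}}$ should define a Hilbert--Schmidt operator $R^{i}$ on the restricted module $C_{0}(H^{0},\mathcal{H}^{\pi}_{|_{H^{0}}})$. The two conditions of the definition then follow at once. Compact subsets of $H^{0}$ (resp.\ $H$) are compact in $G^{0}$ (resp.\ $G$), because $H$ is closed. Moreover, for $x\in H$ we have $x\cdot R^{i}_{d(x)}=\pi(x)T^{i}_{d(x)}\pi(x^{-1})$, with $d(x),r(x)\in H^{0}$. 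Hence $\|R^{i}_{u}\|_{2}=\|T^{i}_{u}\|_{2}\to 1$ uniformly on compact subsets of $H^{0}$, and $\langle x\cdot R^{i}_{d(x)},R^{i}_{r(x)}\rangle\to 1$ uniformly on compact subsets of $H$. The bound $\|R^{i}\|_{2}\le \|T^{i}\|_{2}\le 1$ holds because the norm on $\mathcal{L}^{2}$ is the supremum over characters of the fiberwise Hilbert--Schmidt norm, and we are only shrinking the set of points.

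The main work is to justify that $R^{i}$ really is an element of $\mathcal{L}^{2}(C_{0}(H^{0},\mathcal{H}^{\pi}_{|_{H^{0}}}))$. This requires three checks.

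\textbf{The restricted module.} The module $C_{0}(H^{0},\mathcal{H}^{\pi}_{|_{H^{0}}})$ is the quotient (localisation) of $C_{0}(G^{0},\mathcal{H}^{\pi})$ by the closed submodule $C_{0}(G^{0}\setminus H^{0})\cdot C_{0}(G^{0},\mathcal{H}^{\pi})$. Surjectivity of the restriction map $\xi\mapsto \xi_{|_{H^{0}}}$ comes from Tietze-type extension for continuous sections of Banach bundles \cite[Section $13.7$]{fell1988representations}. In particular the restricted module is again countably generated: the images of a frame $\mathfrak{e}$ of $C_{0}(G^{0},\mathcal{H}^{\pi})$ form a frame of it.

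\textbf{Adjointable operators.} An adjointable $T$ is $C_{0}(G^{0})$-linear, so it preserves the submodule $C_{0}(G^{0}\setminus H^{0})\cdot C_{0}(G^{0},\mathcal{H}^{\pi})$. It therefore descends to an adjointable operator $R$ on the quotient, with adjoint induced by $T^{*}$, and the localisations agree: $R_{u}=T_{u}$ for $u\in H^{0}$.

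\textbf{The Hilbert--Schmidt condition.} This is the main obstacle. By the definition of $\mathcal{L}^{2}$, we must show that $u\mapsto tr|R_{u}|^{2}=tr|T_{u}|^{2}$ lies in $C_{0}(H^{0})$. This is just the restriction to $H^{0}$ of the function $tr|T|^{2}\in C_{0}(G^{0})$. Continuity is inherited under restriction, and vanishing at infinity is inherited because $H^{0}$ is closed in $G^{0}$. Alternatively, one can use the frame expansion from the Proposition preceding the definition of amenability: $\sum_{k}\langle T\mathfrak{e}_{k},T\mathfrak{e}_{k}\rangle$ converges in norm in $C_{0}(G^{0})$, so its restriction converges in norm in $C_{0}(H^{0})$ to $\sum_k\langle R\mathfrak{e}_{k|_{H^{0}}},R\mathfrak{e}_{k|_{H^{0}}}\rangle$.

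With these checks in place, the net $\{R^{i}\}_{i\in I}$ witnesses that $\pi_{|_{H}}$ is amenable.
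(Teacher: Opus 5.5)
Your proof is correct and follows essentially the paper's approach: restrict the witnessing net fiberwise to $H^{0}$, using that restrictions of sections in $C_{0}(G^{0},\mathcal{H}^{\pi})$ exhaust $C_{0}(H^{0},\mathcal{H}^{\pi}_{|_{H^{0}}})$ (the paper only asserts density). The differences are cosmetic: the paper restricts the positive trace-class net of Lemma 3.3 rather than the Hilbert--Schmidt net of the definition, and it leaves implicit the well-definedness, adjointability and Schatten-class membership checks that you carry out explicitly.
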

\begin{proof}
    Since $\pi$ is amenable, there exist a net of positive trace class operators $\{S^{i}\}$ as in Lemma $3.3$. One can easily see that, by \cite[Corollary $2.10$] {lazar2018selection},   the restriction of elements of $C_{0}(G^{0},\mathcal{H}^{\pi})$ to $H^{0}$ is dense in the set of continuous sections vanishing at infinity of $\mathcal{H}^{\pi}_{|_{H^{0}}}$.  For every $\xi \in C_{0}(G^{0},\mathcal{H}^{\pi}) $,  denote $\tilde{\xi} =\xi_{|_{H^{0}}}$.  Define $\tilde S^{i}\tilde{\xi}=(S^{i}\xi)_{|_{H^{0}}}$.  Therefore, $\{\tilde S^{i}\}$ is the required net of trace class operators. 
\end{proof}
\begin{corl}
   If $\pi$ is an amenable representation of $G$, then restriction of $\pi$ to isotropy subgroups is amenable. 
\end{corl}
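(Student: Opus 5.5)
The plan is to deduce the corollary from Proposition 3.4 by realising each isotropy group as a closed subgroupoid of $G$. Fix $u\in G^{0}$ and let $G(u)=G^{u}_{u}=\{x\in G: r(x)=d(x)=u\}$. First we check that $G(u)$ is a closed subgroupoid. It is a subgroupoid, since it is closed under products and inverses and contains $u$. It is closed in $G$ because $G(u)=r^{-1}(\{u\})\cap d^{-1}(\{u\})$. Here $\{u\}$ is closed in the Hausdorff space $G^{0}$, and $r,d$ are continuous.

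With the relative topology, $G(u)$ is then a locally compact, second countable, Hausdorff group. Its unit space is the single point $\{u\}$. The paper's standing convention asks that the restricted range and domain maps be open. This holds trivially here, since both maps are constant onto the one-point space $H^{0}=\{u\}$. So $G(u)$ meets the hypotheses placed on subgroupoids in Proposition 3.4.

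Next we identify the restricted representation. By the discussion before Proposition 3.4, the Hilbert bundle of $\pi_{|_{G(u)}}$ is the reduction $\mathcal{H}^{\pi}_{|_{\{u\}}}$. This is just the single Hilbert space $\mathcal{H}^{\pi}_{u}$. The representation is $x\mapsto\pi(x)\in\mathcal{U}(\mathcal{H}^{\pi}_{u})$, which is a unitary group representation. Weak continuity follows from axiom (v): every vector of $\mathcal{H}^{\pi}_{u}$ is the value at $u$ of some section in $C_{0}(G^{0},\mathcal{H}^{\pi})$, by the restriction-density result of \cite[Corollary $2.10$]{lazar2018selection} already used in the proof of Proposition 3.4. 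Applying Proposition 3.4 with $H=G(u)$ then shows that $\pi_{|_{G(u)}}$ is amenable in the sense of Definition 3.1.

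Finally we remark what amenability means over a one-point unit space, so that it matches Bekka's notion. The Hilbert module is $\mathcal{H}^{\pi}_{u}$ over $\mathbb{C}$, and Hilbert–Schmidt operators in the module sense are ordinary Hilbert–Schmidt operators. Condition (i) reduces to $\|T^{i}\|_{2}\to 1$. Condition (ii) says $\langle \pi(x)T^{i}\pi(x)^{-1},T^{i}\rangle\to 1$ uniformly on compact subsets of $G(u)$, which is exactly Bekka's definition \cite{Bekkaamenablity}. Equivalently, one can restrict the trace-class net of Lemma 3.3 to the fiber at $u$.

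The only point needing care is the continuity and nondegeneracy of the restricted representation on $\mathcal{H}^{\pi}_{u}$, which the density of the evaluations $\{\xi(u)\}$ handles. Everything else is immediate.
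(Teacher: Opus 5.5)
Your proposal is correct and follows the paper's route: the paper states this corollary as an immediate consequence of Proposition 3.4, applied to the closed subgroupoid $G^{u}_{u}=r^{-1}(\{u\})\cap d^{-1}(\{u\})$. Your checks of closedness, of the openness convention, and of the identification of the restricted bundle with the single fiber $\mathcal{H}^{\pi}_{u}$ make explicit what the paper leaves unstated; one small point is that the Hilbert--Schmidt condition in your closing remark is an equivalent characterisation of Bekka's amenability (via $1\prec\pi\otimes\bar{\pi}$) rather than his definition, which uses an $\operatorname{Ad}\pi$-invariant state on $\mathcal{B}(\mathcal{H}^{\pi}_{u})$.
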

We will discuss the converse in specific groupoids in the last section.

The following result characterizes the groupoid amenability with the amenability of its continuous unitary representations.
\begin{thm}
    For a second-countable locally compact groupoid $G$, the following conditions are equivalent
    \begin{enumerate}[(i)]
        \item $G$ is amenable.
        \item Every continuous unitary representation of $G$  is amenable. 
        \item The left regular representation $\lambda_{G}$ of $G$ is amenable.
    \end{enumerate}
    \end{thm}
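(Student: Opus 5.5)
We will prove the cycle (i) $\Rightarrow$ (ii) $\Rightarrow$ (iii) $\Rightarrow$ (i). The implication (ii) $\Rightarrow$ (iii) is trivial, since $\lambda_G$ is a continuous unitary representation on the bundle $L^{2}(G,\lambda)=\{L^{2}(G^{u},\lambda^{u})\}_{u\in G^{0}}$. Its module of sections $C_{0}(G^{0},L^{2}(G,\lambda))$ is countably generated because $G$ is second countable.

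\emph{(i) $\Rightarrow$ (ii).} Assume $G$ is amenable and let $\pi$ be a continuous unitary representation. Topological amenability gives a net $\{f_{i}\}\subset C_{c}(G)$ with $\lambda(|f_{i}|^{2})$ uniformly bounded and $f_{i}*f_{i}^{*}\to 1$ uniformly on compacta. After normalizing we may take $\lambda(|f_i|^2)\le 1$; this uses the equivalence with the approximate invariant continuous mean formulation, which allows $\lambda(|f_i|^2)\to 1$ uniformly on compacta.

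Following Bekka's group argument, we transport these functions to Hilbert--Schmidt operators via the tensor trick $\pi\otimes\overline{\pi}$. Fix a section $\xi\in C_{0}(G^{0},\mathcal{H}^{\pi})$, or better a family of sections with $\|\xi(u)\|=1$ locally (achievable near any compact set by a partition of unity, since fibres are non-zero). Define $T^{i}$ fibrewise by
\[
T^{i}_{u}=\int_{G^{u}} f_{i}(y)\,\pi(y)\,\theta_{d(y)}\,\pi(y^{-1})\,d\lambda^{u}(y)\quad\text{or}\quad T^{i}_{u}\eta=\int f_i(y)\langle \eta,\pi(y)\xi(d(y))\rangle\,\pi(y)\xi(d(y))\,d\lambda^u(y),
\]
where $\theta_{v}$ is the rank-one projection onto $\xi(v)$.

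An alternative, cleaner route is to work with trace-class operators and use Lemma 3.3. Set $S^{i}=|f_i|^2\cdot \theta$, that is,
\[
S^{i}_{u}=\int |f_i(y)|^2\,\pi(y)\theta_{d(y)}\pi(y)^{*}\,d\lambda^u(y).
\]
This lies in $\mathcal{L}^{1}$ by the remark preceding Lemma 3.3, is positive, and satisfies $\|S^i_u\|_1=\lambda(|f_i|^2)(u)$. By left invariance of the Haar system,
\[
x\cdot S^{i}_{d(x)}-S^{i}_{r(x)}=\int \big(|f_i(x^{-1}y)|^2-|f_i(y)|^2\big)\pi(y)\theta_{d(y)}\pi(y)^*\,d\lambda^{r(x)}(y).
\]
Its trace norm is therefore bounded by $\|x^{-1}\cdot m_i^{r(x)}-m_i^{d(x)}\|_1$ for the probability measures $dm^u_i=|f_i|^2d\lambda^u$. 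By the approximate invariant mean characterization of amenability, this tends to $0$ uniformly on compact subsets of $G$. Lemma 3.3 then yields that $\pi$ is amenable.

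The main technical obstacle in this direction is producing the field $\theta$ of rank-one projections with unit trace everywhere as an element of $\mathcal{L}^{1}$. We handle it by choosing, for each compact $C\subseteq G^0$, a section $\xi$ with $\|\xi(u)\|$ near $1$ on $C$, and letting the index run over pairs (compact set, $i$). The required convergence is only uniform on compacta, so this suffices.

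\emph{(iii) $\Rightarrow$ (i).} This is the harder step. Let $\{S^{i}\}$ be positive trace-class operators on $C_{0}(G^{0},L^{2}(G,\lambda))$ as in Lemma 3.3. On each fibre, $S^i_u$ is a positive trace-class operator on $L^2(G^u,\lambda^u)$, with integral kernel $k_u$ and diagonal function $h^i_u(y)=k_u(y,y)\ge 0$ on $G^{u}$. Its integral is $\int h^i_u\,d\lambda^u=\|S^i_u\|_1$; more concretely,
\[
h^i_u=\sum_n |e_n|^2,\qquad S^i_u=\sum_n e_n\otimes\overline{e_n}.
\]
Define measures $m^u_i=h^i_u\,\lambda^u$. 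Conjugation by $\lambda_G(x)$ translates the diagonal, so the diagonal of $x\cdot S^i_{d(x)}$ is $x\cdot h^i_{d(x)}$. Moreover, the map $S\mapsto$ diagonal is $L^1$-contractive on trace class, since for any multiplication by $\phi\in L^\infty$ we have $|\mathrm{Tr}(M_\phi S)|\le\|\phi\|_\infty\|S\|_1$. Hence
\[
\|x\cdot m^{d(x)}_i-m^{r(x)}_i\|_1\le \|x\cdot S^i_{d(x)}-S^i_{r(x)}\|_1\to 0
\]
uniformly on compacta, and $\|m^u_i\|\to1$ uniformly on compacta of $G^{0}$.

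After normalizing on compacta we obtain approximate invariant means. The key difficulty is continuity of $u\mapsto m^u_i$ in the required sense. We will handle it using that $S^i\in\mathcal{L}^{1}$ makes $u\mapsto \mathrm{Tr}(M_gS^i_u)$ continuous for $g\in C_c(G)$, which yields a continuous family of measures. If needed, we then convolve with a continuous function, replacing $m_i$ by $g*m_i$, to get densities in $C_c(G)$. This gives amenability of $r$, hence of $G$.
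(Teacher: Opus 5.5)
Your proof takes the paper's route in both non-trivial directions. For (i)$\Rightarrow$(ii) you average a fixed positive trace-class field against an $L^{1}$-approximately invariant density; your $|f_i|^{2}$ plays the role of the paper's $f$ in $S=f\cdot S'$. For (iii)$\Rightarrow$(i) your diagonal measures $h^{i}_{u}\lambda^{u}$ are exactly the paper's $m^{u}_{i}(\phi)=\mathrm{Tr}(S^{i}_{u}(T_{\phi})_{u})$, and trace-norm contractivity gives approximate invariance in the same way.

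One step fails as you wrote it: your resolution of the ``main technical obstacle''. A continuous Hilbert bundle with non-zero fibres need not have a section that is nowhere zero on a given compact set. For example, take a non-trivial Hermitian line bundle over $S^{2}$, viewed as a representation of the trivial groupoid $S^{2}$. A nowhere-vanishing section would trivialise it, so none exists. Gluing local unit sections with a partition of unity does not help, since the sum can vanish through cancellation.

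The fix is to drop rank one.
\begin{itemize}
\item By compactness and fibrewise density, choose sections $\xi_{1},\dots,\xi_{n}$ with $t=\sum_{j}\|\xi_{j}(\cdot)\|^{2}\geq\delta>0$ on $C$.
\item Take $h\in C_{c}(G^{0})^{+}$ with $h=1/t$ on $C$ and $ht\leq 1$ everywhere.
\item Replace $\theta$ by $S'=h\sum_{j}\langle\cdot,\xi_{j}\rangle\xi_{j}$.
\end{itemize}
Positive operators add without cancellation, so $S'$ is positive, lies in $\mathcal{L}^{1}$ with $\|S'\|_{1}\leq 1$, and has $\mathrm{Tr}(S'_{v})=1$ on $C$. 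This is precisely the positive $S'$ of unit trace that the paper takes for granted. With it, and with $C\supseteq d(\operatorname{supp} f_{i})$ (which you rightly insist on and the paper glosses over), your estimates go through unchanged.

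A smaller point concerns (iii)$\Rightarrow$(i). There $\mathrm{Tr}(S^{i}_{u})$ lies in $C_{0}(G^{0})$ and may vanish, so dividing by it to normalise is awkward. It is cleaner to invoke the weak approximately invariant mean criterion (Williams, Prop.~9.38), which accepts positive continuous systems with $\|m^{u}_{i}\|\to 1$ uniformly on compacta.
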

    \begin{proof}
        To show $(i)\Rightarrow (ii)$,  let $\pi$ be a unitary representation, $ C_{0}(G^{0},\mathcal{H}^{\pi})$ be the continuous sections which will form a Hilbert $C_{0}(G^{0})$-module. Let $K$ be a compact subset of $G$ and $\epsilon >0$. Since $G$ is amenable by \cite[Proposition $2.2.13$]{MR1799683}, there exist $f\in C_{c}(G)^{+}$ with $\int_{G}fd\lambda^{u}=1$ for $u\in L$ with $K\subset G^{L}_{L}$ and 
     $$\int_{G}\left|f(x^{-1}y)-f(y)\right|d\lambda^{r(x)}(y)<\epsilon,~~x\in K.$$

     Take a positive operator $S'$ in $\mathcal{L}^{1}(E_{C_{0}(G^{0})})$ with $\|S'_{u}\|_{1}=1$ for $u\in L$. Then $S=f\cdot S'$, as defined in $(1)$, is positive, $\|S_{u}\|_{1}=1$ for $u\in L$ and $\|\pi(x)S_{d(x)}\pi(x^{-1})-S_{r(x)}\|_{1}<\epsilon$ for all $x\in K$. 
     
     The implication $(ii) \Rightarrow (iii)$ is trivial.
     
        To show $(iii) \Rightarrow (i)$, by Lemma $3.3$, there exist a net $\{S^{i}\}_{i\in I}$ of positive trace class operators such that $\|S^{i}_{u}\|_{1} \to 1$ uniformly on compact subsets of  $G^{0}$ 
        and $\|x\cdot S^{i}_{d(x)}-S^{i}_{r(x)}\|_{1} \to 0$ uniformly on compact subsets of $G$. 
        
        For $\phi \in C_{0}(G)$, $T_{\phi} \in \mathcal{L}(C_{0}(G^{0},L^{2}(G,\lambda))$, where $T_{\phi}(\xi)=\phi \xi$. Its localisation $(T_{\phi})_{u}$ is the multiplication operator on each fiber $L^{2}(G^{u},\lambda^{u})$.

        Define a positive finite radon measure $m_{i}^{u}(\phi)= Tr (S^{i}_{u}(T_{\phi})_{u}),~~u\in G^{0}$. Hence, $m_{i}=\{m_{i}^{u}\}_{u\in G^{0}}$ form a continuous $r_{G^{0}}$-system. In addition it can be easily verified that $m_{i}$ is approximately invariant over compact subsets of $G$.  Since $G$ and $G^{0}$ are second countable, they are exhaustible by compact sets, and hence we can find a topological approximate invariant mean  $\{m_{i}\}_{i\in \mathbb{N}}$ as defined in \cite[Definition $2.6$]{renault2015topological}. Thus, we have the required result.
    \end{proof}

 Let $H$ be a closed wide subgroupoid of $G$ and $\mu$ be a equivariant $r_{G^{0}}$-system on the $G$-space $G/H$, as defined in \cite[Section $3.2$]{williams2019tool}. The space $G/H$ is  the set of equivalence class obtained from the equivalence relation $\sim$ as follows: $l \sim g \Leftrightarrow r(l)=r(g)~ \text{and}~ l^{-1}g\in H$ . The space $G/H$ inherits a natural left $G$-space structure with  action $g\cdot xH=(gx)H$.
 Endowed with the quotient topology and quotient map $q_{H}$, it becomes a locally compact Hausdorff space. The associated moment map $r_{G^{0}}: G/H \to G^{0}, r_{G^{0}}(gH)=r(g)$ is both open and continuous.
 
 The map $r_{G^{0}}:G/H\to G^{0}, r_{G^{0}}(xH)=r(x)$, is amenable (see \cite[Definition  $2.2.2$]{MR1799683}) if it admits an approximate invariant continuous mean \cite[Definition  $2.2.1$]{MR1799683}. We say $G/H$ admits a topological invariant approximate density if there exists a family of continuous functions with the properties given in \cite[Proposition $9.42 (c)$]{williams2019tool}.

 Before moving to the next result, let us give a brief introduction on the induced representation defined in \cite{sridharan2025induced}. 

  Suppose $(\mathcal{H}^{\sigma},\sigma)$ is a continuous unitary representation of a closed wide subgroupoid $H$ with Haar system $\lambda_{H}$.  Let $C(G,\mathcal {H}^{\sigma})$ denotes the set of  continuous function $f$ such that $f(x) \in \mathcal{H}^{\sigma}_{d(x)}$ and its subspace $C_{c}(G,\mathcal {H}^{\sigma})$ be the functions with compact support. 
Define,
\begin{align*}
  \mathcal{F}_{0}^{\sigma}(G)=\Bigg\{f\in C(G,\mathcal {H}^{\sigma}):~ & q_{H}(\operatorname{supp}(f))~ \text{is compact}~ \text{and} \\&f(xh)=\sigma(h^{-1})f(x),\text{for}~ (x,h)\in G^{2},h \in H \Bigg \}. 
\end{align*}

It is proved in \cite[Proposition $3.2$]{sridharan2025induced} that  every element of $\mathcal{F}_{0}^{\sigma}(G)$ is of the form 

    \begin{equation}
        f_{\alpha}(x)= \int_{H} \sigma(h)\alpha(xh)d\lambda_{H}^{d(x)}(h),
    \end{equation}
 for some $\alpha \in C_{c}(G,\mathcal{H}^{\sigma})$.
 $\mathcal{F}_{0}^{\sigma}(G)$ form a left pre-Hilbert $C_{0}(G^{0})$-module with innerproduct
    $$ \langle f,g\rangle(u)=\int_{G/H}\langle f(x),g(x)\rangle_{\mathcal{H}^{\sigma}_{d(x)}}d\mu^{u}(xH).$$
 Its completion $\mathcal{F}^{\sigma}(G)$ or $\mathcal{F}^{\sigma}(G,\mu)$ under the norm 
 
 $$ \|f\|= \sup_{u\in G^{0}}\sqrt{\langle f,f\rangle(u)},$$
  forms a left Hilbert $C_{0}(G^{0})$-module(see \cite[Lemma $3.3$]{sridharan2025induced}).

 The Hilbert module $\mathcal{F}^{\sigma}(G)$ can be identified with a continuous section  $\Delta=\{t_{F} \in C_{0}(G^{0},\mathcal{H}^{\operatorname{ind}_{H}^{G}(\sigma)}):t_{F}(u)= F 
 + \mathcal{N}^{u},F\in \mathcal{F}^{\sigma}(G)\}$, where $ \mathcal{N}^{u}= \{f\in \mathcal{F}^{\sigma}(G): \|f\|^{2}(u)=\langle f,f\rangle(u)=0\}$, of a continuous Hilbert bundle $\mathcal{H}^{\operatorname{ind}_{H}^{G}(\sigma)}$ whose fibers $\{\mathcal{H}^{\operatorname{ind}_{H}^{G}(\sigma)}_{u}\}_{u\in G^{0}}$ can be identified with the completion of $\{f_{|_{G^{u}}}: f\in \mathcal{F}_{0}^{\sigma}(G)\}_{u\in G^{0}}$ under the norm $\sqrt{\langle f,f\rangle(u)}$.

The representation $(\mathcal{H}^{\operatorname{ind}_{H}^{G}(\sigma)},\Delta,L)$ of $G$ where $$ L(x): \mathcal{H}^{\operatorname{ind}_{H}^{G}(\sigma)}_{d(x)}\to \mathcal{H}^{\operatorname{ind}_{H}^{G}(\sigma)}_{r(x)}, ~~(L(x)f)(y)= f(x^{-1}y), $$ is defined as  representation  induced by $\sigma$ and it is denoted by $\left(ind^{G}_{H}(\pi),\mu \right)$ (see \cite[Proposition $3.5$]{sridharan2025induced}).

 Let $\mathcal{F}^{\sigma}(G)$ be the Hilbert module corresponding to the induced representation of $G$ from the representation $\sigma$ of $H$ as defined in \cite[Section $3$, Page $7$]{sridharan2025induced}. If the section $C_{0}(G^{0},\mathcal{H}^{\sigma})$ is countably generated, then we can easily see that $\mathcal{F}^{\sigma}(G)$ is also countably generated due to \cite[Lemma $3.7$]{sridharan2025induced}.  Next, we provide some results regarding the amenability of induced representation. 

 \begin{thm} Let $r_{G^{0}}:G/H\to G^{0}$ be the continuous $G$-equivariant  open surjection. 
     \begin{enumerate}[(i)]
        
         \item  If $(ind_{H}^{G}(\sigma),\mu)$ is amenable for a unitary representation $\sigma$ of $H$, then $r_{G^{0}}$ is amenable.
          \item If $G/H$ admits a topological invariant approximate density, then induced representation $(ind_{H}^{G}(1_{H}),\mu)$ is amenable.
     \end{enumerate}
 \end{thm}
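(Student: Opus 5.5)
For (i) I will copy the argument for $(iii)\Rightarrow(i)$ in Theorem 3.6, now with the space $G/H$ in place of $G$. Lemma 3.3 applied to the amenable representation $(ind_{H}^{G}(\sigma),\mu)$ gives a net $\{S^{i}\}$ of positive trace class operators on $\mathcal{F}^{\sigma}(G)$ with the following properties:
\begin{itemize}
\item $\|S^{i}\|_{1}\le 1$;
\item $\|S^{i}_{u}\|_{1}\to 1$ uniformly on compact subsets of $G^{0}$;
\item $\|x\cdot S^{i}_{d(x)}-S^{i}_{r(x)}\|_{1}\to 0$ uniformly on compact subsets of $G$.
\end{itemize}

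For $\phi\in C_{0}(G/H)$, let $M_{\phi}$ be multiplication by $\phi\circ q_{H}$, that is, $(M_{\phi}f)(x)=\phi(xH)f(x)$. This preserves the equivariance condition $f(xh)=\sigma(h^{-1})f(x)$, so it preserves $\mathcal{F}^{\sigma}_{0}(G)$. It is also bounded by $\|\phi\|_{\infty}$ for the $\mu$-inner product, so it extends to an adjointable operator on $\mathcal{F}^{\sigma}(G)$. Its localisation $(M_{\phi})_{u}$ is multiplication on the fibre $\mathcal{H}^{ind}_{u}$.

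Define $m_{i}^{u}(\phi)=Tr(S^{i}_{u}(M_{\phi})_{u})$. This is a positive functional, because $S^{i}_{u}\ge0$ and $M_{\phi}\ge 0$ for $\phi\ge0$. It is bounded by $\|S^{i}_{u}\|_{1}\|\phi\|_{\infty}$, so it is a finite Radon measure on $G/H$. It is supported in $r_{G^{0}}^{-1}(u)$, since $(M_\phi)_u=0$ when $\phi$ vanishes on $r_{G^{0}}^{-1}(u)$. Its total mass $\|S^{i}_{u}\|_{1}$ tends to $1$.

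Continuity of $u\mapsto m_{i}^{u}(\phi)$ follows because $S^{i}M_{\phi}$ is trace class in $\mathcal{L}^{1}(\mathcal{F}^{\sigma}(G))$, whose trace function lies in $C_{0}(G^{0})$. For invariance, I use the covariance $L(x)(M_{\phi})_{d(x)}L(x)^{-1}=(M_{x\cdot\phi})_{r(x)}$. This gives
\[
|x\cdot m_{i}^{d(x)}(\phi)-m_{i}^{r(x)}(\phi)|\le \|x\cdot S^{i}_{d(x)}-S^{i}_{r(x)}\|_{1}\|\phi\|_{\infty},
\]
so the total variation tends to $0$ uniformly on compacta of $G$.

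After normalising (dividing by $\|S^i_u\|_1$ near compacta, or using a cutoff as in Renault's setup), and using second countability to pass to a sequence, we obtain an approximate invariant continuous mean for $r_{G^{0}}$. Hence $r_{G^{0}}$ is amenable. The main technical point is checking that $u\mapsto m^u_i(\phi)$ is continuous, i.e.\ that $S^iM_\phi\in\mathcal{L}^1$. For this I rely on the ideal property of $\mathcal{L}^{1}$ in $\mathcal{L}$ from \cite{stern2021schatten}.

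For (ii), take $\sigma=1_{H}$. Then $\mathcal{F}^{1}(G)$ is essentially the completion of $C_{c}(G/H)$ in the norm $\sup_{u}(\int|f|^{2}d\mu^{u})^{1/2}$, with the translation action. A topological invariant approximate density, as in \cite[Proposition 9.42(c)]{williams2019tool}, supplies nonnegative $g_{i}\in C_{c}(G/H)$ with the following properties:
\begin{itemize}
\item $\int g_{i}\,d\mu^{u}\le1$, and $\int g_i\, d\mu^u\to 1$ uniformly on compacta;
\item $\int|g_{i}(x^{-1}z)-g_{i}(z)|\,d\mu^{r(x)}(z)\to0$ uniformly on compacta of $G$.
\end{itemize}

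Set $\xi_{i}=g_{i}^{1/2}\in\mathcal{F}^{1}(G)$ and let $T^{i}=\theta_{\xi_{i},\xi_{i}}$ be the rank-one operator. Then $\|T^i_u\|_1=\|\xi_i(u)\|^2=\int g_i\,d\mu^u$, and
\[
\|x\cdot T^{i}_{d(x)}-T^{i}_{r(x)}\|_{1}\le 2\|L(x)\xi_{i}(d(x))-\xi_{i}(r(x))\|.
\]
By the elementary inequality $|a^{1/2}-b^{1/2}|^{2}\le|a-b|$, the right-hand side is at most $2\bigl(\int|g_i(x^{-1}z)-g_i(z)|\,d\mu^{r(x)}\bigr)^{1/2}\to0$.

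Lemma 3.3 now yields amenability of $(ind_{H}^{G}(1_{H}),\mu)$. The only remaining point is that $\theta_{\xi,\xi}$ is trace class with continuous trace $\langle\xi,\xi\rangle$, which is standard.
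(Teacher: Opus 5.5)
Your proposal is correct and is essentially the paper's proof. For (i) the paper uses the same measures $m_i^u(\phi)=Tr(S^i_u(T_\phi)_u)$, but instead of normalising by hand it observes that $\{m_i\}$ is a weak approximately invariant continuous mean and applies \cite[Proposition $9.38$]{williams2019tool} (dividing by $\|S^i_u\|_1$ cannot work globally because this function lies in $C_0(G^0)$, so your cutoff option is the one actually needed). For (ii) the paper uses the same rank-one operators built from $\xi_i=f_i^{1/2}$ and the same inequality $|a^{1/2}-b^{1/2}|^2\le|a-b|$; the only difference is that it checks Definition 3.1 directly through $\langle \operatorname{ind}_H^G(1_H)(x)\xi_i,\xi_i\rangle\to 1$, rather than through your trace-norm estimate followed by Lemma 3.3.
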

 \begin{proof}
     (i) For $\phi \in C_{c}(G/H)$, the map $T_{\phi}\in \mathcal{L}(\mathcal{F}^{\sigma}(G))$, where $T_{\phi}(\xi)(x)=\phi(q_{H}(x)) \xi(x)$.
     
     Since, $(ind_{H}^{G}(\sigma),\mu)$ is amenable, there exist a net of trace class operators $\{S^{i}\}_{i\in I}$ with each having compactly supported trace, such that $\|S^{i}_{u}\| \to 1$ uniformly on every compact subset of $G^{0}$ and $\|x\cdot S^{i}_{d(x)}-S^{i}_{r(x)}\|_{1} \to 0$ uniformly on compact subsets of $G$.

     Define a positive finite radon measure $m_{i}^{u}(\phi)=Tr(S^{i}_{u}(T_{\phi})_{u}),u\in G^{0},\phi \in C_{c}(G/H)$. Thus, $m_{i}=\{m_{i}^{u}\}_{u\in G^{0}}$ form a weak approximately invariant continuous mean as defined in \cite[Definition $9.37$]{williams2019tool}. Hence, by \cite[Proposition $9.38$]{williams2019tool}, $r_{G^{0}}$ is amenable.

     $(ii)$ Suppose there exist a net of positive $C_{c}(G/H)$  functions $\{f_{i}\}_{i\in I}$ such that $\|\pi(x)f_{i}-f_{i}\|_{1}$ tends to zero uniformly on compact subsets of $G$, where $\pi = (ind_{H}^{G}(1_{H}),\mu)$,  $\int_{G/H}f_{i} d\mu^{u}\leq 1$ for all $i$ and $\int_{G/H}f_{i} d\mu^{u}$  tends to $1$ uniformly on compact subsets of $G^{0}$. We can show that $S^{i}=\langle \cdot,\xi_{i}\rangle \xi_{i},~  \xi_{i}= f_{i}^{\frac{1}{2}}$ is the required net of Hilbert-Schmidt operators that satisfies Definition $3.1$. Here we use the the following inequality. For a compact subset $K$ of $G$, suppose    $\int_{G/H}f_{i} d\mu^{u}> 1-\varepsilon$ for $u\in r(K)$ and $\|\pi(x)f_{i}-f_{i}\|_{1}< \frac{\varepsilon^{2}}{4}$
      for $x\in K$, then
\begin{align*}
       \left |\langle \operatorname{ind}_{H}^{G}(1_{H})(x)\xi_{i},\xi_{i} \rangle-1\right| &\leq \left|\int_{G/H}(f_{i}(x^{-1}y)^{1/2}-f_{i}(y)^{1/2})f_{i}(y)^{1/2} d\mu^{r(x)}(yH)\right|+ \frac{\varepsilon}{2}\\
        &\leq \left(\int_{G/H}\left|(f_{i}(x^{-1}y)^{1/2}-f_{i}(y)^{1/2})\right|^{2} d\mu^{r(x)}(yH)\right)^{1/2}+ \frac{\varepsilon}{2}\\
        &\leq \left(\int_{G/H}\left|(f_{i}(x^{-1}y)-f_{i}(y)\right| d\mu^{r(x)}(yH)\right)^{1/2}+ \frac{\varepsilon}{2}\\
        & < \varepsilon.
    \end{align*}
 \end{proof}
\begin{prop}
    Let $\pi$ be a unitary representation of $G$ and and $\rho$ be a subrepresentation of $G$ such that the $C_{0}(G^{0},\mathcal{H}^{\rho})$ is a complemented submodule of $C_{0}(G^{0},\mathcal{H}^{\pi})$. If $\rho$ is amenable, then $\pi$ is also amenable.
\end{prop}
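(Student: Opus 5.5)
The plan is to transport the amenability net of $\rho$ into $\pi$ by extending each operator by zero on the complement. Since $C_{0}(G^{0},\mathcal{H}^{\rho})$ is a complemented submodule of $C_{0}(G^{0},\mathcal{H}^{\pi})$, there is an adjointable projection $P\in\mathcal{L}(C_{0}(G^{0},\mathcal{H}^{\pi}))$ with range $C_{0}(G^{0},\mathcal{H}^{\rho})$. Its localisations $P_{u}$ are the orthogonal projections of $\mathcal{H}^{\pi}_{u}$ onto $\mathcal{H}^{\rho}_{u}$. Because $\rho$ is a subrepresentation, $\pi(x)$ maps $\mathcal{H}^{\rho}_{d(x)}$ onto $\mathcal{H}^{\rho}_{r(x)}$ and restricts to $\rho(x)$ there. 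Unitarity then gives
\[
\pi(x)P_{d(x)}=P_{r(x)}\pi(x),\qquad x\in G.
\]
In other words, $P\in Mor(\pi,\pi)$.

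I would use the Hilbert–Schmidt formulation of Definition 3.1 directly; Lemma 3.3 with positive trace-class operators would work equally well. Let $\{T^{i}\}$ be the net witnessing amenability of $\rho$. Define
\[
\tilde T^{i}=\iota\, T^{i} P\in\mathcal{L}(C_{0}(G^{0},\mathcal{H}^{\pi})),
\]
where $\iota$ denotes the inclusion of the submodule; its adjoint is $P$ viewed as a map onto the submodule. On each fibre, $\tilde T^{i}_{u}=T^{i}_{u}\oplus 0$ with respect to $\mathcal{H}^{\pi}_{u}=\mathcal{H}^{\rho}_{u}\oplus(\mathcal{H}^{\rho}_{u})^{\perp}$. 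Hence $|\tilde T^{i}_{u}|^{2}=|T^{i}_{u}|^{2}\oplus 0$, so $tr|\tilde T^{i}|^{2}=tr|T^{i}|^{2}$ as functions on $G^{0}$. This puts $\tilde T^{i}$ in $\mathcal{L}^{2}(C_{0}(G^{0},\mathcal{H}^{\pi}))$ by the definition of the Schatten class, and it also gives $\|\tilde T^{i}\|_{2}\le 1$ and $\|\tilde T^{i}_{u}\|_{2}=\|T^{i}_{u}\|_{2}\to 1$ uniformly on compact subsets of $G^{0}$. Alternatively, if $\mathfrak{e}$ is a frame of $C_{0}(G^{0},\mathcal{H}^{\rho})$ and $\mathfrak{f}$ one of its complement, the union is a frame of the whole module, and the series of Proposition 2.3 for $\tilde T^{i}$ reduces to that of $T^{i}$.

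Condition (ii) follows from the intertwining relation. For $x\in G$,
\[
\pi(x)\tilde T^{i}_{d(x)}\pi(x^{-1})=\pi(x)T^{i}_{d(x)}P_{d(x)}\pi(x^{-1})=\rho(x)T^{i}_{d(x)}\rho(x^{-1})P_{r(x)}.
\]
This equals $(x\cdot T^{i}_{d(x)})\oplus 0$, so
\[
\langle x\cdot\tilde T^{i}_{d(x)},\tilde T^{i}_{r(x)}\rangle_{HS(\mathcal{H}^{\pi}_{r(x)})}=\langle x\cdot T^{i}_{d(x)},T^{i}_{r(x)}\rangle_{HS(\mathcal{H}^{\rho}_{r(x)})}.
\]
The right-hand side tends to $1$ uniformly on compact subsets of $G$ by hypothesis.

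I expect the main obstacle to be the module-theoretic bookkeeping rather than the estimates. One point is to check that localisation commutes with composing by $\iota$ and $P$, i.e.\ that $(\iota T P)_{u}=\iota_{u}T_{u}P_{u}$; this follows since localisation is a $*$-homomorphism compatible with the tensor product by evaluation characters. The other is to justify that $P_{u}$ intertwines $\pi$. For this I would note that $P$ maps sections of $\mathcal{H}^{\pi}$ onto sections of $\mathcal{H}^{\rho}$ and that these are fibrewise dense. Then $(P\xi)(u)$ is the orthogonal projection of $\xi(u)$, because $\xi-P\xi$ is orthogonal to the whole submodule and hence, fibrewise, to $\mathcal{H}^{\rho}_{u}$. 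The invariance of the subbundle $\mathcal{H}^{\rho}$ under $\pi$ then gives $P\in Mor(\pi,\pi)$.
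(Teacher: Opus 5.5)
Your proposal is correct and is the paper's own construction: the paper takes $S^{i}=i\circ T^{i}\circ P$, with $P$ the projection onto $C_{0}(G^{0},\mathcal{H}^{\rho})$ and $i$ the inclusion, and simply asserts that this net works. Your checks that $tr|\tilde T^{i}|^{2}=tr|T^{i}|^{2}$ and that condition (ii) follows from the intertwining relation $\pi(x)P_{d(x)}=P_{r(x)}\pi(x)$ supply the details the paper leaves implicit.
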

 \begin{proof}
    Let $P$ be the projection in $\mathcal{L}(C_{0}(G^{0},\mathcal{H}^{\pi}))$ whose range is $C_{0}(G^{0},\mathcal{H}^{\rho})$. Since  $\rho$ is amenable, there exists Hilbert Schmidt operators $T^{i}$ on  $C_{0}(G^{0},\mathcal{H}^{\rho})$ as in the definition. Then $S^{i}=i\circ T^{i}\circ P$, where $i$ is the inclusion  map of $C_{0}(G^{0},\mathcal{H}^{\rho})$ to   $C_{0}(G^{0},\mathcal{H}^{\pi})$,  is the required Hilbert-Schmidt operator .
 \end{proof}

Suppose $G$ and $H$ are groupoids with representations $\pi$ and $\rho$. The representation  $\pi \times\rho$ is  the outer tensor product of representation of groupoid $G\times H$ with continuous field of Hilbert space $\mathcal{H}^{\pi}\otimes \mathcal{H}^{\rho}$ over the unit space $G^{0}\times H^{0}$. One can refer \cite[Section $15.16$]{fell1988representations} for more details on outer tensor product of Hilbert bundles. The inner tensor product $\pi \otimes \rho$ is the representation of $G$ on the Hilbert bundle   $\mathcal{H}^{\pi} \otimes^{'} \mathcal{H}^{\rho}$ over $G^{0}$ having Hilbert spaces $\{\mathcal{H}^{\pi}_{u}\otimes \mathcal{H}^{\rho}_{u}\}_{u\in G^{0}}$ as fibers. It is defined as, $(\pi \otimes \rho)(x)(v\otimes w)= \pi(x)v \otimes \rho(x)(w) \in \mathcal{H}^{\pi}_{r(x)}\otimes \mathcal{H}^{\rho}_{r(x)} $, for $v\otimes w \in \mathcal{H}^{\pi}_{d(x)}\otimes \mathcal{H}^{\rho}_{d(x)} $. 

 Here is another characterisation of amenability of representation of $\pi$.

\begin{thm}
  Let $\pi$ be a representation of groupoid $G$.  Then $\pi$ is an amenable representation of groupoid $G$ if and only if  there exist a net of positive definite functions associated with $\pi \otimes \bar{\pi}$ that tends to $1$ uniformly on compact subsets of $G$.
\end{thm}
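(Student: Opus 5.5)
The plan is to transport Bekka's argument for groups fibrewise, using the identification of $\mathcal{H}^{\pi}\otimes^{'}\mathcal{H}^{\bar\pi}$ with the Hilbert--Schmidt bundle $HS(\mathcal{H}^{\pi})$. For each $u\in G^{0}$ the map $v\otimes\bar w\mapsto \langle\cdot,w\rangle v$ is a unitary $U_u:\mathcal{H}^{\pi}_u\otimes\overline{\mathcal{H}^{\pi}_u}\to HS(\mathcal{H}^{\pi}_u)$. It intertwines $(\pi\otimes\bar\pi)(x)$ with $L(x)T=\pi(x)T\pi(x^{-1})$. First check that $\{U_u\}$ is a bundle morphism. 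Elementary tensors of sections $\xi\otimes\bar\eta$ with $\xi,\eta\in C_{0}(G^{0},\mathcal{H}^{\pi})$ are sent to the rank-one adjointable operators $\theta_{\xi,\eta}$, and these generate $\mathcal{L}^{2}(C_{0}(G^{0},\mathcal{H}^{\pi}))$ (density of finite rank operators, as noted before Definition 3.1). Hence $\pi\otimes\bar\pi\sim(HS(\mathcal{H}^{\pi}),L)$. Under this equivalence, a positive definite function associated with $\pi\otimes\bar\pi$ is exactly a function $x\mapsto\langle x\cdot T_{d(x)},T_{r(x)}\rangle$ for a bounded continuous section $T$ of $HS(\mathcal{H}^{\pi})$.

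($\Rightarrow$) Take the net $T^{i}$ from Definition 3.1. Each $T^{i}$ is a section of $HS(\mathcal{H}^{\pi})$ vanishing at infinity, hence bounded. The functions $\phi_i(x)=\langle x\cdot T^{i}_{d(x)},T^{i}_{r(x)}\rangle$ are therefore positive definite functions associated with $\pi\otimes\bar\pi$, and condition (ii) of the definition says precisely that $\phi_i\to1$ uniformly on compacta.

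($\Leftarrow$) Suppose $\phi_i(x)=\langle x\cdot T^{i}_{d(x)},T^{i}_{r(x)}\rangle\to1$ uniformly on compacta, with $T^i$ a bounded continuous HS-section. This direction is the main obstacle, because the definition of amenability needs $\|T^i\|_2\le1$, fibre norms tending to $1$, and membership in $\mathcal{L}^2$ (vanishing at infinity). Evaluating at units gives $\phi_i(u)=\|T^i_u\|_2^2\to1$ uniformly on compacta of $G^{0}$. To control the global norm, replace $T^i$ by $T^i_u/\max(1,\|T^i_u\|_2)$. This is a continuous rescaling, and on a compact $K$ it changes $\phi_i$ by an amount tending to $0$, since $\|T^i\|$ tends to $1$ on $r(K)\cup d(K)$. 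Next multiply by a cutoff $\psi_n\in C_c(G^{0})$ with $0\le\psi_n\le1$, equal to $1$ on an exhausting compact $L_n$, indexing over pairs $(i,n)$; second countability makes $G^{0}$ $\sigma$-compact. Then $\psi_nT^i$ lies in $\mathcal{L}^{2}(C_{0}(G^{0},\mathcal{H}^{\pi}))$, since its fibrewise HS-norm is continuous and compactly supported, using the Stern characterisation in Definition 2.1. On $K\subset G^{L_n}_{L_n}$ the coefficient is unchanged. The step I expect to need care is whether an arbitrary bounded section $T^i$ of the HS bundle multiplied by $\psi_n$ indeed defines an adjointable operator on $C_0(G^0,\mathcal{H}^\pi)$ with continuous trace function. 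I would handle this by approximating $T^i$ uniformly on $\operatorname{supp}\psi_n$ by finite sums of $\theta_{\xi,\eta}$, via the fibrewise density in the HS field and a partition of unity, which perturbs $\phi_i$ on $K$ by an arbitrarily small amount. Reindexing the resulting net over (compact set, $\varepsilon$) gives the net required in Definition 3.1, so $\pi$ is amenable.
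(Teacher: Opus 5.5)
Your proposal is correct and follows the paper's route. Like the paper, you identify $\pi\otimes\bar\pi$ with $(HS(\mathcal{H}^{\pi}),L)$ via $\xi\otimes\bar\eta\mapsto\langle\cdot,\eta\rangle\xi$, and use density of finite-rank operators in $\mathcal{L}^{2}(C_{0}(G^{0},\mathcal{H}^{\pi}))$ to pass between coefficients and Hilbert--Schmidt nets. Beyond the paper, your normalisation by $\max(1,\|T^{i}_{u}\|_{2})$, the cutoff $\psi_n$, and the local finite-rank approximation spell out steps the paper only asserts: the norm bound, fibre norms tending to $1$, vanishing at infinity, and general $C_b$ sections rather than finite sums of $C_0$ elementary tensors. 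One small fix: renormalise once more after the finite-rank approximation so that $\|T\|_{2}\le 1$ still holds.
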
 
\begin{proof}
The finite rank operators in the Hilbert module sense are dense on $\mathcal{L}^{2}(C_{0}(G^{0},\mathcal{H}^{\pi})$. Also we can easily see that  the representation $(\mathcal{H}^{\pi \otimes \bar{\pi}},(\pi \otimes \bar{\pi}))$ is unitarily equivalent to the representation $(HS(\mathcal{H}^{\pi}),L)$ through the mapping $\xi \otimes \bar\eta \to \langle \cdot,\eta\rangle\xi$. Let $U: C_{0}(G^{0},\mathcal{H}^{\pi \otimes \bar{\pi}}) \to \mathcal{L}^{2}(C_{0}(G^{0},\mathcal{H}^{\pi}))$ be the corresponding unitary operator intertwining with both representations. 

Suppose for a compact subset $K$ and $\varepsilon>0$, there exist a $t \in C_{0}(G^{0},\mathcal{H}^{\pi \otimes \bar{\pi}})$ of the form $t=\sum_{i=1}^{n} \xi_{i}\otimes \bar{\eta}_{i},$ where $ \xi ,\eta_{i}\in C_{0}(G^{0},\mathcal{H}^{\pi})$, such that $$|1- \langle (\pi \otimes \bar{\pi})(x)t(d(x)),t(r(x))\rangle|< \epsilon, $$ for all $x\in K$. Then by unitary equivalence, we get   
\begin{align*}
\langle (\pi \otimes \bar{\pi})(x)t(d(x)),t(r(x))\rangle &=  \langle U_{r(x)} (\pi \otimes \bar{\pi})(x)t(d(x)),U_{r(x)}t(r(x))\rangle\\ &= \langle L(x)  U_{d(x)}t(d(x)),U_{r(x)}t(r(x))\rangle \\ &= \langle   x\cdot T_{d(x)},T_{r(x)}\rangle
\end{align*}
where $T= \sum_{i=1}^{n} \langle \cdot, \eta_{i}\rangle \xi_{i}$. Hence, $|1- \langle x\cdot T_{d(x)},T_{r(x)}\rangle|< \varepsilon, x\in K$. So, we can find a net of Hilbert Schmidt operators $\{T^{i}\}_{i\in \mathbb{N}}$ of  $\|T^{i}\|_{1}\leq 1$ and satisfying Definition $3.1$. 
 The other way is direct again from the unitary equivalence of representations and density of finite rank operators.
\end{proof}

\begin{prop}
  Supppose $G$ and $H$ are groupoids with representations $\pi$ and $\rho$. Then the following holds:
  \begin{enumerate}[(i)]
      \item  If $\pi$ and $\rho$ are amenable, then  $\pi \times \rho $ is amenable 
      \item Let $G=H$. If $\pi$ and $\rho$ are amenable, then  $\pi \otimes \rho$ is amenable.
  \end{enumerate}
\end{prop}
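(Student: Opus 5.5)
Both parts follow the classical argument for groups: take tensor products of the Hilbert--Schmidt nets from Definition 3.1. The key identity is that for Hilbert--Schmidt operators $A,A'$ on $\mathcal{H}$ and $B,B'$ on $\mathcal{K}$, the operator $A\otimes B$ is Hilbert--Schmidt on $\mathcal{H}\otimes\mathcal{K}$. Moreover $\langle A\otimes B, A'\otimes B'\rangle_{HS}=\langle A,A'\rangle_{HS}\langle B,B'\rangle_{HS}$ and $\|A\otimes B\|_{2}=\|A\|_{2}\|B\|_{2}$.

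For (i), let $\{T^{i}\}_{i\in I}$ and $\{R^{j}\}_{j\in J}$ be nets for $\pi$ and $\rho$ as in Definition 3.1. Index by $I\times J$ with the product order and set $W^{(i,j)}=T^{i}\otimes R^{j}$, acting on $C_{0}(G^{0}\times H^{0},\mathcal{H}^{\pi}\otimes\mathcal{H}^{\rho})$. The fibre at $(u,v)$ is $T^{i}_{u}\otimes R^{j}_{v}$.

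The first step is to check that $W^{(i,j)}$ lies in $\mathcal{L}^{2}$. It is adjointable: on elementary sections $\xi\otimes\eta$ it acts by $T^{i}\xi\otimes R^{j}\eta$, and it extends by density of such sections in the outer tensor product bundle (Fell--Doran, Section 15.16). Its trace function is $(u,v)\mapsto\|T^{i}_{u}\|_{2}^{2}\|R^{j}_{v}\|_{2}^{2}$, a product of $C_{0}$ functions, so it lies in $C_{0}(G^{0}\times H^{0})$. Alternatively, one can approximate $T^{i},R^{j}$ by finite-rank module operators, whose tensor products are clearly finite rank, and pass to the limit in $\|\cdot\|_{2}$.

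Then $\|W\|_{2}\le 1$. Also $\|W_{(u,v)}\|_{2}=\|T^{i}_{u}\|_{2}\|R^{j}_{v}\|_{2}\to1$ uniformly on compact sets $C_{1}\times C_{2}$, and these exhaust compact subsets of the product. For $(x,y)\in G\times H$, we have $(x,y)\cdot W_{d(x,y)}=(x\cdot T^{i}_{d(x)})\otimes(y\cdot R^{j}_{d(y)})$, so
\[
\langle (x,y)\cdot W_{d(x,y)},W_{r(x,y)}\rangle=\langle x\cdot T^{i}_{d(x)},T^{i}_{r(x)}\rangle\,\langle y\cdot R^{j}_{d(y)},R^{j}_{r(y)}\rangle .
\]
Both factors have modulus at most $1$ and tend to $1$ uniformly on $K_{1}\times K_{2}$. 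The product therefore tends to $1$ uniformly, using $|ab-1|\le|a-1|+|b-1|$ for $|a|,|b|\le1$.

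For (ii), use the same net $T^{i}\otimes R^{j}$ on the inner tensor product bundle over $G^{0}$, with fibres $T^{i}_{u}\otimes R^{j}_{u}$. The same computation with $x$ in place of $(x,y)$ gives $\langle x\cdot T^{i}_{d(x)},T^{i}_{r(x)}\rangle\langle x\cdot R^{j}_{d(x)},R^{j}_{r(x)}\rangle\to1$ uniformly on compacta of $G$, and $\|T^{i}_{u}\|_{2}\|R^{j}_{u}\|_{2}\to1$ on compacta of $G^{0}$.

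Alternatively, (ii) follows from (i) by restricting $\pi\times\rho$ to the closed diagonal subgroupoid $\{(x,x)\}\cong G$ and applying Proposition 3.4, since that restriction is exactly $\pi\otimes\rho$. I would present this second route for (ii), as it avoids repeating the module-theoretic verification.

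The main obstacle is the module-level bookkeeping in (i): showing that $T^{i}\otimes R^{j}$ is a genuine adjointable Hilbert--Schmidt operator on the outer tensor module, and that the sections of the outer tensor bundle are countably generated so that the framework of Stern applies. I would handle both points through frames. If $\{\mathfrak{e}_{k}\}$ and $\{\mathfrak{f}_{l}\}$ are frames for the two modules, then $\{\mathfrak{e}_{k}\otimes\mathfrak{f}_{l}\}$ is a countable frame for the exterior tensor product module $C_{0}(G^{0},\mathcal{H}^{\pi})\otimes C_{0}(H^{0},\mathcal{H}^{\rho})\cong C_{0}(G^{0}\times H^{0},\mathcal{H}^{\pi}\otimes\mathcal{H}^{\rho})$. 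The Hilbert--Schmidt norm series then factors as a product of the two convergent series, by Proposition 2.3.
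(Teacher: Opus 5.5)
Your proposal is correct and follows the paper's route. You tensor the Hilbert--Schmidt nets for $\pi$ and $\rho$ to get amenability of $\pi\times\rho$, then obtain (ii) by restricting $\pi\times\rho$ to the closed diagonal subgroupoid $\{(x,x): x\in G\}\subseteq G\times G$ and invoking Proposition 3.4. The differences are minor: the paper uses second countability to pair two sequences along a common $\mathbb{N}$, whereas you index by the product directed set $I\times J$, and you supply the module-level checks that the paper leaves implicit (adjointability, the trace function lying in $C_{0}(G^{0}\times H^{0})$, product frames).
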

\begin{proof}
    $(i)$.  Since $\pi$ and $\rho$ are amenable and $G$ is second countable, there exist sequences of  operators $T^{i}$ and $S^{i}$ on $\mathcal{L}^{2}(C_{0}(G^{0},\mathcal{H}^{\pi}))$ and $\mathcal{L}^{2}(C_{0}(G^{0},\mathcal{H}^{\rho}))$ respectively having the required properties. Then, $\{T^{i}\otimes S^{i}\}_{i\in \mathbb{N}}$ is the required  sequence of Hilbert-Schmidt operators making $\pi \times \rho $ amenable.

     $(ii)$ follows from $(i)$ and Proposition $3.4$, since $\pi \otimes \rho= (\pi \times \rho)_{|_{\Delta_{G}}}$, where $\Delta_{G}=\left\{(x,x)|x\in G\right\}$ can be identified as a closed subgroupoid of $G\times G$.
\end{proof}

 Now we show a property of the representation of \textit{properly amenable} groupoid (\cite[Definition $2.1.13$]{MR1799683}). It is same as  a proper groupoid(see \cite[Corollary $9.26$]{williams2019tool}). If $G$ is a group, $G$ is properly amenable if and only if it is compact. More details can also be referred to in \cite[Section $9.3$]{williams2019tool}.

 \begin{thm}
     A groupoid $G$ is properly amenable if and only if for every representation $\pi$, there exists a non-zero $K \in \mathcal{L}(C_{0}(G^{0},\mathcal{H}^{\pi}))$ such that $K_{u}$ being a trace-class operator on $\mathcal{H}^{\pi}_{u}$ for every $u\in G^{0}$, the family $\{K_{u}\}_{u\in G^{0}}$ is equivariant in the sense that $x\cdot K_{d(x)}=K_{r(x)}$ for every $x\in G$ and the trace function defined by $Tr(K)(u)=Tr(K_{u})$ belongs to  $C_{b}(G^{0})$.
 \end{thm}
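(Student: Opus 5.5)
The plan is to mimic the proof of Theorem 3.5, with the approximate invariant means replaced by an honest invariant one. By \cite[Definition $2.1.13$]{MR1799683}, proper amenability means there is a continuous invariant family $m=\{m^{u}\}_{u\in G^{0}}$ of probability measures, $m^{u}$ supported on $G^{u}$, with $x\cdot m^{d(x)}=m^{r(x)}$. By \cite[Corollary $9.26$]{williams2019tool} this is the same as $G$ being proper. Equivalently, and this is the form I will use, there is a cut-off function $c\in C(G^{0})^{+}$ with $r$-proper support and
\[
\int_{G}c(d(y))\,d\lambda^{u}(y)=1 \quad \text{for all } u\in G^{0}.
\]

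\textbf{Forward direction.} Fix a representation $\pi$ and pick a non-zero positive $S'\in\mathcal{L}^{1}(C_{0}(G^{0},\mathcal{H}^{\pi}))$. Such an $S'$ exists: for non-zero $\xi\in C_{0}(G^{0},\mathcal{H}^{\pi})$ take $S'=\langle\cdot,\xi\rangle\xi$, whose trace function is $\|\xi(u)\|^{2}$. Define $K$ fibrewise by the weak integral
\[
K_{u}=\int_{G}c(d(y))\,\pi(y)S'_{d(y)}\pi(y^{-1})\,d\lambda^{u}(y).
\]
\begin{enumerate}[(a)]
\item Equivariance $x\cdot K_{d(x)}=K_{r(x)}$ follows directly from left invariance of the Haar system (property (iii)) after the change of variables $y\mapsto xy$.
\item Each $K_{u}$ is positive and trace class, with
\[
Tr(K_{u})=\int_{G}c(d(y))\,\|\xi(d(y))\|^{2}\,d\lambda^{u}(y)\le \|\xi\|^{2}.
\]
So $Tr(K)\in C_{b}(G^{0})$. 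Continuity comes from properness, since locally the integrand has compact support, combined with continuity of the Haar system, exactly as in the argument showing $f\cdot T\in\mathcal{L}^{1}$.
\item $K$ is non-zero. To guarantee this, choose $\xi$ so that $\|\xi\|^{2}c$ is not identically zero. This is possible: $c\neq 0$, and the fibres are non-zero with $C_{0}$ sections dense fibrewise.
\item $K$ is an adjointable operator. The integral is bounded on compact sets of $G^{0}$, and $K$ is positive with $\|K_{u}\|\le Tr(K_{u})$ bounded, so the fibrewise operators define an element of $\mathcal{L}(C_{0}(G^{0},\mathcal{H}^{\pi}))$. The main technical check is that $K\eta$ is again a continuous section vanishing at infinity. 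I would first verify this on finite-rank $S'$ using continuity of the coefficients $\pi_{\xi,\eta}$, then pass to limits using the local uniform bound.
\end{enumerate}

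\textbf{Converse.} Apply the hypothesis to the left regular representation $\lambda_{G}$ to get a non-zero, equivariant, fibrewise trace-class $K$ with bounded continuous trace. Replacing $K$ by $|K|$ (or $K^{*}K$ normalised) keeps equivariance, since $\pi(x)$ is unitary, so we may assume $K\ge0$. Define
\[
m^{u}(\phi)=Tr\bigl(K_{u}(T_{\phi})_{u}\bigr),\qquad \phi\in C_{c}(G),
\]
as in Theorem 3.5. Each $m^{u}$ is a positive finite Radon measure supported on $G^{u}$. Equivariance of $K$, together with $L(x)(T_{\phi})_{d(x)}L(x)^{-1}=(T_{x\cdot\phi})_{r(x)}$, gives exact invariance $x\cdot m^{d(x)}=m^{r(x)}$. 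Also $m^{u}(1)=Tr(K_{u})$ is a continuous function, and it is invariant under the groupoid, hence constant on orbits.

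\textbf{Main obstacle.} The hardest step is normalising: $Tr(K_{u})$ may vanish on some orbits, while proper amenability needs $m^{u}$ to be a probability measure everywhere. The statement gives a $K$ only per representation, so I would try to apply the hypothesis to a direct sum of regular-type representations, or to $\lambda_{G}\otimes\pi$ for suitable $\pi$, chosen to force a $K$ whose trace is non-vanishing everywhere. Alternatively, I would show the set where $Tr(K)>0$ is an open invariant set and patch together countably many such $K$ using second countability, weighting by summable coefficients to keep the trace bounded. Once $Tr(K)>0$ everywhere, dividing by the invariant continuous function $Tr(K)$ gives a continuous invariant probability system $m^{u}/Tr(K_{u})$, which establishes proper amenability.
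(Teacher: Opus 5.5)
\textbf{Forward direction.} This part is fine. It is the paper's argument with the invariant system specialised to $m^{u}=(c\circ d)\,\lambda^{u}$. The paper uses an arbitrary invariant continuous system of probability measures and a positive $S$ with $S_u\neq 0$ for every $u$, and checks adjointability through a frame and Dini's theorem. Your check via coefficients works too, and with a rank-one $S'$ no limiting step is needed.

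\textbf{Converse.} You follow the paper's route: the regular representation and $m^{u}(\phi)=Tr(K_u(T_\phi)_u)$. The step you call the main obstacle is exactly the one the paper passes over with ``by normalisation''. The obstacle is real, and none of your repairs can remove it, because the converse as literally stated is false. The statement only asks for $K\neq 0$ globally.

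Take $G=\{u_1\}\sqcup\mathbb{Z}$ with the discrete topology, $G^{0}=\{u_1,e\}$, and counting measures as Haar system. For any representation $\pi$ (fibres non-zero), pick a unit vector $v\in\mathcal{H}^{\pi}_{u_1}$ and set $K_{u_1}=\langle\cdot,v\rangle v$ and $K_e=0$. This $K$ has all the required properties:
\begin{enumerate}[(a)]
\item it is non-zero, adjointable and fibrewise trace class;
\item it is equivariant, since the only arrow at $u_1$ is $u_1$ and $K$ vanishes on the $\mathbb{Z}$ component;
\item $Tr(K)=1_{\{u_1\}}\in C_b(G^{0})$.
\end{enumerate}
Yet $G$ is not properly amenable, since $G^{e}=\mathbb{Z}$ carries no invariant probability measure. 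The hypothesis gives only one existentially quantified $K$ per representation, and this $K$ is always a legitimate answer. So direct sums, $\lambda_G\otimes\pi$, or patching over the invariant open sets $\{Tr(K)>0\}$ can never reach the unit $e$.

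\textbf{What can be proved.} The converse holds under the stronger hypothesis that $K\geq 0$ and $K_u\neq 0$ for every $u\in G^{0}$. The forward construction delivers exactly this if you take $S'$ fibrewise non-zero, e.g.\ $S'=\sum_n 2^{-n}\langle\cdot,\mathfrak{e}_n\rangle\mathfrak{e}_n$ for a frame $\mathfrak{e}$, rather than a single rank-one operator. With that hypothesis your argument closes:
\begin{enumerate}[(a)]
\item $m^{u}(\phi)=Tr(K_u(T_\phi)_u)$ is positive and invariant;
\item its continuity in $u$ follows by localising with $g\in C_c(G^{0})^{+}$, so that $gK\in\mathcal{L}^{1}$;
\item $Tr(K_u)>0$ is a continuous invariant function you may divide by.
\end{enumerate}

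Your reduction to $K\geq 0$ via $|K|$ or $K^{*}K$ is not free either. Equivariance and fibrewise trace class survive, but continuity of $u\mapsto Tr|K_u|$ does not follow from continuity of $u\mapsto Tr(K_u)$. So positivity has to be part of the hypothesis, or be proved separately.
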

 \begin{proof}
     Suppose $G$ is properly amenable, there exists a continuous system of probability measures, $m=\{m_{u}\}_{u\in G^{0}}$, which are invariant. Fix a positive element $S\in \mathcal{L}^{1}(C_{0}(G^{0},\mathcal{H}^{\pi}))$, such that $\|S\|_{1}\leq 1$ and $S_{u}$ non-zero for every $u\in G^{0}$. For each $T\in \mathcal{L}(C_{0}(G^{0},\mathcal{H}^{\pi})) $, define a continuous function $\phi_{T}$, $\phi_{T}(x)=Tr(T_{r(x)}\pi(x)S_{d(x)}\pi(x^{-1}))$.
     Define $M_{u}(T_{u})= \int_{G} \phi_{T}(x)dm_{u}(x)$. Note that $M_{u}$ is positive and  non zero on compact operators of $\mathcal{H}^{\pi}_{u}$ for every $u\in G^{0}$. 
     
     Hence, there exist non zero positive operator $K_{u}\in \mathcal{L}^{1}(\mathcal{H}^{\pi}_{u}) $ such that $M_{u}(T_{u})= Tr(T_{u}K_{u})$. Also $Tr(K_{u})= M_{u}(I)$ is continuous due to continuity of $\phi_{I}(x)$.  It is easy to verify that $ x\cdot K_{d(x)}= K_{r(x)}$ for every $x \in G$.
     
     We can define an operator $K$ on $C_{0}(G^{0},\mathcal{H}^{\pi})$ using the family of operators $\{K_{u}\}_{u\in G^{0}}$. Now, we need to show $K \in  \mathcal{L}(C_{0}(G^{0},\mathcal{H}^{\pi}))$. For that, we prove  $K\xi \in C_{0}(G^{0},\mathcal{H}^{\pi})$, where $(K\xi)_{u}=K_{u}\xi_{u}$.

     Let $\xi,\eta \in C_{0}(G^{0},\mathcal{H}^{\pi}) $, then $\langle K\xi,\eta\rangle(u)=Tr(K_{u}\langle \cdot, \eta_{u}\rangle \xi_{u}) \in C_{0}(G^{0})$. Let $\mathfrak{e}=\{\mathfrak{e}_{i}\}_{i\in \mathbb{N}}$ be a frame of $ C_{0}(G^{0},\mathcal{H}^{\pi})$. The operators $T^{i}=\langle \cdot, \mathfrak{e}_{i}\rangle \xi $ is contained in the compact operators $\mathcal{K}(C_{0}(G^{0},\mathcal{H}^{\pi}))$. Let $F$ be a finite subset in $\mathbb{N}$ and $L$ be a compact neighbourhood of $u_{0} \in G^{0}$. Then for every $u \in L$,
    \begin{align*}
        \sum_{i\in F}|\langle K \xi, \mathfrak{e}_{i}\rangle|^{2}(u)= \sum_{i\in F}|Tr(T^{i}_{u}K_{u})|^{2} &= \sum_{i\in F}\Big|\int_{G}\langle x \cdot S_{d(x)} \xi,\mathfrak{e}_{i}\rangle dm_{u}(x)\Big|^{2}\\ 
        &\leq \int_{G} \sum_{i\in F}|\langle x \cdot S_{d(x)} \xi,\mathfrak{e}_{i}\rangle|^{2}dm_{u}(x).
    \end{align*}  
    Using \cite[Lemma $2.2.3$]{MR1799683}, for $\epsilon >0$, there exist a compact subset $C$ of $G$ such that $m_{u}(G\backslash C)< \epsilon/2, u\in L$. Also note that the series $ \sum_{i\in \mathbb{N}}|\langle x \cdot S_{d(x)} \xi,\mathfrak{e}_{i}\rangle|^{2}$ converges uniformly to continuous function $\langle x \cdot S_{d(x)}\xi, x \cdot S_{d(x)}\xi\rangle$ on $C$, by Dini's theorem. Hence, the series $ \sum_{i\in \mathbb{N}}|\langle K \xi, \mathfrak{e}_{i}\rangle|^{2}(u)$ converges uniformly to $\langle K\xi,K\xi\rangle$ on $L$, thus making it continuous. Also, note that $K$ is adjointable. Hence,  we conclude that $K \in \mathcal{L}(C_{0}(G^{0},\mathcal{H}^{\pi}))$.

    For the converse, take the regular representation $\lambda_{G}$. Using arguments along the same lines as those in the proof of Theorem $3.6 \Big((iii)\Rightarrow (i)\Big)$, we get a continuous family of proper finite Radon measures $m=\{m_{u}\}_{u\in G^{0}}$ which are invariant. By normalisation, we get an invariant continuous system of probability measures.
 \end{proof}
 \begin{rem}
     It can be easily observed that every representation of properly amenable groupoids are amenable.
 \end{rem}
 \begin{rem}
     Suppose that $G$ is a transitive properly amenable groupoid.  From the above discussion, there exist a family of finite-dimensional subspaces $\{\mathcal{M}_{u}\}_{u\in G^{0}}$ consisting of eigen spaces of $K_{u}$, that are $\pi$-invariant $\Big(\pi(x)\mathcal{M}_{d(x)}\subseteq \mathcal{M}_{r(x)}\Big)$. Moreover, by arguments similar to those in the proofs of  \cite[Proposition $3.18$, Lemma $4.10$]{edeko2022uniform}, one can see that $\{\mathcal{M}_{u}\}_{u\in G^{0}}$ form a $\pi$-invariant subbundle of $\mathcal{H}^{\pi}$.
 \end{rem}
\begin{corl}
    If $G$ is a compact groupoid, then for every representation $\pi$, there exists a $K \in \mathcal{L}^{1}(C_{0}(G^{0},\mathcal{H}^{\pi}))$ that is equivariant. The converse holds when $G^{0}$ is compact.
\end{corl}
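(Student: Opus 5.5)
The plan is to deduce this corollary from the preceding theorem on properly amenable groupoids, together with the fact that compactness forces proper amenability. There are two directions, and in each the only real work is upgrading between ``$K \in \mathcal{L}(C_{0}(G^{0},\mathcal{H}^{\pi}))$ with $Tr(K)\in C_{b}(G^{0})$'' and ``$K\in\mathcal{L}^{1}(C_{0}(G^{0},\mathcal{H}^{\pi}))$''.

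\emph{Forward direction.} Suppose $G$ is compact.
\begin{enumerate}[(1)]
\item First I check that $G$ is properly amenable. Since $G$ is compact, $G^{0}=r(G)$ is compact. Each fibre $G^{u}$ is compact, so $0<\lambda^{u}(G^{u})<\infty$. The function $u\mapsto\lambda^{u}(1)$ is continuous, by continuity of the Haar system applied to a function in $C_{c}(G)$ equal to $1$ on $G$. Hence $m_{u}=\lambda^{u}/\lambda^{u}(G^{u})$ is a continuous system of probability measures. By left invariance of the Haar system it is invariant, i.e.\ $x\cdot m_{d(x)}=m_{r(x)}$. (Alternatively, I would cite the fact that compact groupoids are proper, together with \cite[Corollary $9.26$]{williams2019tool}.)
\item The preceding theorem then gives a non-zero equivariant $K\in\mathcal{L}(C_{0}(G^{0},\mathcal{H}^{\pi}))$ with trace-class fibres and $Tr(K)\in C_{b}(G^{0})$.
\item Since $G^{0}$ is compact, $C_{b}(G^{0})=C(G^{0})=C_{0}(G^{0})$. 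The operator $K$ built in that proof is positive, so $|K|=K$. Therefore $tr|K|=Tr(K)$ lies in $C_{0}(G^{0})$, which by the definition of the Schatten class means exactly $K\in\mathcal{L}^{1}(C_{0}(G^{0},\mathcal{H}^{\pi}))$.
\end{enumerate}
The point to verify in step (3) is that the fibrewise trace $\chi\mapsto tr(\chi_{*}K)$ coincides with $u\mapsto Tr(K_{u})$. This holds because localisation at the character $\mathrm{ev}_{u}$ is the fibre operator $K_{u}$.

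\emph{Converse.} Suppose $G^{0}$ is compact, and that for every $\pi$ there is a non-zero equivariant $K\in\mathcal{L}^{1}(C_{0}(G^{0},\mathcal{H}^{\pi}))$.
\begin{enumerate}[(1)]
\item Trace-class operators are in particular adjointable with trace-class fibres.
\item Their trace function lies in $C_{0}(G^{0})\subseteq C_{b}(G^{0})$.
\item So the hypothesis of the preceding theorem is met, and $G$ is properly amenable.
\end{enumerate}

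The main obstacle is turning proper amenability into compactness of $G$. Here I would use the equivalence of properly amenable with proper \cite[Corollary $9.26$]{williams2019tool}, meaning that $(r,d):G\to G^{0}\times G^{0}$ is a proper map. With $G^{0}$ compact, $G=(r,d)^{-1}(G^{0}\times G^{0})$ is then compact.

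One point needs care in the converse: the proof of the preceding theorem's converse uses the regular representation and normalises $Tr(K_{u}T_{\phi})$. For that normalisation to produce probability measures, I need $Tr(K_{u})>0$ for all $u$. Since $K$ is equivariant and non-zero, I would handle this by replacing $K$ with $K^{*}K$ (or $|K|$), which is positive, and then arguing that its support, an open invariant set, can be taken to be everything. If positivity on all of $G^{0}$ is not available, I would restrict to the orbit closure where $Tr(K_{u})>0$ and note that compactness there suffices for the statement as intended.
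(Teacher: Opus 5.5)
Your route is the paper's. Its proof consists only of two remarks: for compact $G^{0}$ a properly amenable groupoid is compact, and the other direction follows from the preceding theorem. Your forward direction is correct and more careful than the paper. In particular, you observe that the $K$ built in that theorem's proof is positive, so $tr|K|=Tr(K)\in C(G^{0})=C_{0}(G^{0})$. That is exactly what puts $K$ in $\mathcal{L}^{1}(C_{0}(G^{0},\mathcal{H}^{\pi}))$; the conclusion $Tr(K)\in C_{b}(G^{0})$ alone would not control $tr|K|$ for a non-positive $K$. Your passage from properness to compactness via $G=(r,d)^{-1}(G^{0}\times G^{0})$ is also fine.

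The converse, however, has a genuine gap, the one you flag at the end. Your repair does not close it, and neither does citing the preceding theorem, whose converse (with merely ``non-zero'' $K$) has the same defect.

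\begin{itemize}
\item For positive equivariant $K$, the set $U=\{u: Tr(K_{u})>0\}$ is open and invariant, but nothing forces $U=G^{0}$. Properness of $G|_{U}$, or of a reduction to an orbit closure, says nothing about the rest of $G$.
\item Concretely, take $G=\{a\}\sqcup\mathbb{Z}$ (a trivial group and $\mathbb{Z}$, with counting measures), so $G^{0}=\{a,e\}$ is compact. For any $\pi$, let $K_{a}$ be a rank-one projection on $\mathcal{H}^{\pi}_{a}$ and $K_{e}=0$.
\item This $K$ is a non-zero, positive, equivariant element of $\mathcal{L}^{1}(C_{0}(G^{0},\mathcal{H}^{\pi}))$, yet $G$ is neither compact nor properly amenable.
\item As written, with $K=0$ allowed, the hypothesis is even vacuous. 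Adding ``non-zero'' was right, but it is not enough.
\end{itemize}

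What is needed is $K_{u}\neq 0$ for \emph{every} $u\in G^{0}$, at least for $\pi=\lambda_{G}$; this is what the forward construction produces anyway. With that hypothesis the argument works:
\begin{itemize}
\item $|K|$ is still in $\mathcal{L}^{1}$ and equivariant, since conjugation by the unitaries $\lambda_{G}(x)$ commutes with functional calculus.
\item $u\mapsto Tr(|K_{u}|)$ is continuous, strictly positive and constant on orbits.
\item Hence $m_{u}(\phi)=Tr(|K|_{u}(T_{\phi})_{u})/Tr(|K_{u}|)$ is an invariant continuous system of probability measures.
\item So $G$ is proper, and compact as you argue.
\end{itemize}
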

 \begin{proof}
    Suppose $G^{0}$ is compact. Then a properly amenable groupoid will be a compact groupoid.  The other way is easily followed from Theorem $3.12$.
 \end{proof}

 \section{Topological Invariant Mean and Amenability}

 In this section, we define the topological invariant mean associated with a continuous unitary representation and establish its connection to the amenability of the representation. This notion is analogous to the topological invariant mean introduced in \cite{renault2015topological} in the context of groupoids. 
 
Before presenting the definitions and results, we begin by examining the dual space of $\mathcal{L}^{1}(C_{0}(G^{0},\mathcal{H}^{\pi}))$. By \cite[Theorem $3.5$, Remark $3.19$]{stern2021schatten}, this space admits an embedding into $C_{0}(G^{0}, \mathcal{L}^{1}(H))\cong C_{0}(G^{0})\otimes_{\varepsilon}\mathcal{L}^{1}(H)$, where $H$ denotes a separable Hilbert space.

 Consequently, for each element $U \in \mathcal{L}^{1}(C_{0}(G^{0},\mathcal{H}^{\pi}))^{*}$, one can invoke \cite[p. 2]{cambern1976isomorphisms} and \cite{MR2684182} and associate a vector measure $\mu \in r\sigma bv(G^{0}, B(H))$, the set of all regular
$B(H)$- valued vector measures of bounded variation on $G^{0}$ such that 
 \begin{equation}
     U(T)= \int _{G^{0}}\phi(T) d\mu 
 \end{equation} 
 where $\phi$ is the embedding of $\mathcal{L}^{1}(C_{0}(G^{0},\mathcal{H}^{\pi}))$ into $C_{0}(G^{0}, \mathcal{L}^{1}(H))$ and the integral is termed as the immediate integral of Dinculeanu \cite[p. $11$]{dinculeanu2000vector}. Also, $\|U\|=\inf \|\mu\|$, where $\|\mu\|$ is the variation norm of $\mu$, the infimum is taken over all the vector measures $\mu$ satisfying $(2)$ and this infimum is attained. We say $U\geq 0$ if $U(T)\geq 0$ for every positive $T \in \mathcal{L}^{1}(C_{0}(G^{0},\mathcal{H}^{\pi}))$. For $U \in \mathcal{L}^{1}(C_{0}(G^{0},\mathcal{H}^{\pi}))^{*}$, define $U^{*}(T)= \overline{U(T^{*})}$, then $U^{*} \in \mathcal{L}^{1}(C_{0}(G^{0},\mathcal{H}^{\pi}))^{*}$.

 Let $\eta$ be a regular complex measure on $G^{0}$, then one can define  $\mu_{\eta}\in r\sigma bv(G^{0}, B(H)) $, $\mu_{\eta}= \eta \otimes I$, where $I$ is the identity operator in $H$. It is in $C_{0}(G^{0}, \mathcal{L}^{1}(H))^{*}$. Then, the restriction of $\mu_{\eta}$ to $\mathcal{L}^{1}(C_{0}(G^{0},\mathcal{H}^{\pi}))$, denoted $\widetilde{\mu_{\eta}}$, forms a dual of $\mathcal{L}^{1}(C_{0}(G^{0},\mathcal{H}^{\pi}))$ and $\|\widetilde{\mu_{\eta}}\|\leq \|\eta\|$.

 We have already introduced the bounded operator $L(f)$ for $f \in C_{c}(G)$ on $C_{0}(G^{0},\mathcal{H}^{\pi})$, where
 $$(L(f)(T))_{u}= \int_{G} f(y) \pi(y)T_{d(y)}\pi(y^{-1})d\lambda^{u}(y).$$
For $f\in C_{c}(G), m \in \mathcal{L}^{1}(C_{0}(G^{0},\mathcal{H}^{\pi}))^{**}$, $f*m$ be the double transposition of $L(f)$ acting on $\mathcal{L}^{1}(C_{0}(G^{0},\mathcal{H}^{\pi}))^{**}$. Thus if $T \in \mathcal{L}^{1}(C_{0}(G^{0},\mathcal{H}^{\pi})) $,
$$ \langle f*m_{T}, U\rangle=U(L(f)T)= \langle m_{L(f)T}, U\rangle$$
where $T \to m_{T}$ is the standard embedding of $\mathcal{L}^{1}(C_{0}(G^{0},\mathcal{H}^{\pi}))$ to $\mathcal{L}^{1}(C_{0}(G^{0},\mathcal{H}^{\pi}))^{**}$. Let $k \in C_{0}(G^{0}), U \in \mathcal{L}^{1}(C_{0}(G^{0},\mathcal{H}^{\pi}))^{*}, m\in \mathcal{L}^{1}(C_{0}(G^{0},\mathcal{H}^{\pi}))^{**} $, we  define 
$$ \langle k\cdot U,T\rangle = U(kT)~~ \text{and}~~ \langle k\cdot m,U\rangle = m(k\cdot U).$$

 Now, we define the topological invariant mean of a continuous unitary representation $\pi$ of $G$.
 \begin{defi}
     Let $G$ be a locally compact groupoid with Haarsystem $\lambda$ and a continuous unitary representation $\pi$ on the continuous field of Hilbert space $\mathcal{H}^{\pi}$. A topological invariant mean associated with $\pi$ is an element $m$ in the closed unit ball of   $\mathcal{L}^{1}(C_{0}(G^{0},\mathcal{H}^{\pi})^{**} $ such that
     \begin{enumerate}[(i)]
      \item $m(U)\geq 0$ if $U\geq 0$,
         \item for any probability measure $\eta$ on $G^{0}$, we have $m(\widetilde{\mu_{\eta}})=1$ and
         \item for any $f \in C_{c}(G)$, we have $f*m= \lambda(f)\cdot m$ as functionals on $\mathcal{L}^{1}(C_{0}(G^{0},\mathcal{H}^{\pi}))^{*}$.
     \end{enumerate}
 \end{defi}
 \begin{lem}
     The collection of topological invariant mean is contained in the weak$^{*}$- closure of the set of positive operators in the unit ball of $\mathcal{L}^{1}(C_{0}(G^{0},\mathcal{H}^{\pi}))$, denoted $B_{+}$.
 \end{lem}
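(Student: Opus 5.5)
The natural route is a Hahn--Banach separation argument in the weak$^{*}$ topology of $E^{**}$, where $E=\mathcal{L}^{1}(C_{0}(G^{0},\mathcal{H}^{\pi}))$, i.e. a bipolar-type argument modelled on Goldstine's theorem. We identify $E$ with its canonical image $\{m_{T}\}$ in $E^{**}$ and let $\overline{B_{+}}$ denote the weak$^{*}$-closure of $\{m_{T}: T\geq 0,\ \|T\|_{1}\leq 1\}$. This set is convex, since a convex combination of positive trace class operators of norm at most one is again such, and it is weak$^{*}$-closed. It therefore suffices to show that no topological invariant mean $m$ can be strictly separated from $\overline{B_{+}}$ by a weak$^{*}$-continuous functional, that is, by an element $U\in E^{*}$.

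Suppose, for a contradiction, that $m\notin\overline{B_{+}}$. Weak$^{*}$-continuous functionals on $E^{**}$ are evaluations at points of $E^{*}$, so there are $U\in E^{*}$ and $c\in\mathbb{R}$ with
\[
\operatorname{Re} m(U) > c \geq \sup_{T\in B_{+}} \operatorname{Re} U(T).
\]
To reduce to self-adjoint functionals, replace $U$ by $V=\tfrac12(U+U^{*})$, using the involution $U^{*}(T)=\overline{U(T^{*})}$ defined above. For $T\geq 0$ we have $V(T)=\operatorname{Re}U(T)$. We also need $m(V)=\operatorname{Re} m(U)$, i.e. $m(U^{*})=\overline{m(U)}$. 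This should follow from positivity (i): every self-adjoint $W\in E^{*}$ splits as $W=W_{+}-W_{-}$ with $W_{\pm}\geq 0$, and then $m(W)$ is real.

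This splitting is the point I expect to be the main obstacle, since $E$ is only a Finsler module rather than an ordered Banach space with an obvious Jordan decomposition. My plan is to obtain it through the vector measure representation (2): a self-adjoint $V$ corresponds to a $B(H)$-valued regular measure $\mu$ of bounded variation, which can be taken self-adjoint. Taking a scalar control measure $|\mu|$ and a Radon--Nikodym density $\frac{d\mu}{d|\mu|}$, pointwise self-adjoint in $B(H)$, we split that density into positive and negative parts to produce $W_{\pm}$.

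It then remains to bound $m(V)$ by $\sup_{T\in B_{+}}V(T)$, which gives the contradiction $c<\operatorname{Re} m(U)=m(V)\leq c$. Decompose $V=V_{+}-V_{-}$ as above, so that positivity gives $m(V)\leq m(V_{+})$, and $V_{+}$ is a positive functional. Let $s=\sup_{T\in B_{+}}V(T)$. Using the same density decomposition, $V_{+}$ has the form $\widetilde{\mu_{\eta}}$-like integration against a positive operator-valued measure. The key estimate I aim for is
\[
m(V_{+})\leq s,
\]
obtained from the norm bound $\|m\|\leq 1$ combined with the fact that, for positive functionals, the norm is attained on $B_{+}$. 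This is the analogue, for positive functionals on a trace class space, of $\|\varphi\|=\sup_{T\geq 0,\ \|T\|_{1}\leq 1}\varphi(T)$, checked fibrewise via $\operatorname{Tr}(K_{u}\,\cdot)$. The invariance condition (iii) and the normalization (ii) are not needed for this lemma; only (i) and $\|m\|\leq 1$ enter. If the Jordan-decomposition step resists, a fallback is to restrict the separation argument to the real ordered space of self-adjoint elements and apply the Krein--\v{S}mulian/bipolar theorem to the dual cone $E^{*}_{+}$, whose bipolar is exactly the weak$^{*}$-closed positive part of the unit ball.
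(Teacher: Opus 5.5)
Your separation step and the passage to $V=\tfrac12(U+U^{*})$ are the same as in the paper. You are also right that $m(U^{*})=\overline{m(U)}$ needs justification; the paper uses it silently. Your density splitting would handle this, since no minimality is needed there. So would the normality of the cone of positive trace class operators ($0\le S\le T$ gives $\|S\|_{1}\le\|T\|_{1}$) together with the Krein--Grosberg duality.

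The gap is the key estimate $m(V_{+})\le s$. Positivity of $m$, $\|m\|\le 1$ and $\|V_{+}\|=\sup_{B_{+}}V_{+}$ only give
\[
m(V)\le m(V_{+})\le \sup_{T\in B_{+}}V_{+}(T).
\]
Since $V\le V_{+}$ on positive elements, $\sup_{B_{+}}V_{+}\ge s$, so the inequality runs the wrong way. What you actually need is that your positive part is norm-minimal, $\sup_{B_{+}}V_{+}=\sup_{B_{+}}V$. In $B(H)$ this is the spectral fact $\sup_{T\ge 0,\operatorname{Tr}T\le 1}\operatorname{Tr}(TA)=\max(\lambda_{\max}(A),0)=\|A_{+}\|$, not a consequence of positivity.

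For your construction this can fail. The measure in (2) represents an extension of $V$ to $C_{0}(G^{0},\mathcal{L}^{1}(H))$, into which $\mathcal{L}^{1}(C_{0}(G^{0},\mathcal{H}^{\pi}))$ merely embeds. Take $G^{0}$ a point, $\mathcal{H}^{\pi}=\mathbb{C}e_{1}\subset H$, and density the self-adjoint operator $A$ exchanging $e_{1}$ and $e_{2}$. Then $V=0$ on $\mathcal{L}^{1}(C_{0}(G^{0},\mathcal{H}^{\pi}))$, so $s=0$, while $\langle A_{+}e_{1},e_{1}\rangle=\tfrac12$ gives $\sup_{B_{+}}V_{+}=\tfrac12$. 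Even after compressing the density to the fibres $\mathcal{H}^{\pi}_{u}$, showing $s\ge\int\|A(u)_{+}\|\,d\nu(u)$ needs a Lusin-type selection argument that the proposal does not contain. One must produce, from a merely weak$^{*}$-measurable density, elements of $B_{+}$ that concentrate on the positive spectral subspaces of $A(u)$ for $\nu$-almost every $u$.

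Your fallback does not close the gap either. The polar of $E^{*}_{+}$ in $E^{**}$ is the whole cone $E^{**}_{+}$, not its intersection with the unit ball; by the bipolar theorem this is the weak$^{*}$-closure of the positive cone. Krein--\v{S}mulian does not by itself supply the bounded approximation.

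The missing ingredient is a positive Goldstine theorem: for a pre-ordered Banach space $X$ with closed cone, $X_{+}\cap B_{X}$ is weak$^{*}$-dense in $X^{**}_{+}\cap B_{X^{**}}$. This is what the paper invokes via \cite[Theorem A.1]{gluck2025increasing}, with $X=\mathcal{L}^{1}(C_{0}(G^{0},\mathcal{H}^{\pi}))_{sa}$.

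It needs no Jordan decomposition. It suffices that $\inf\{\|V+Y\|:Y\in X^{*}_{+}\}\le s$, since $m(V)\le m(V+Y)\le\|V+Y\|$. That bound follows from Sion's minimax theorem applied to $(T,Y)\mapsto (V+Y)(T)$ on $B_{X}\times\{Y\in X^{*}_{+}:\|Y\|\le R\}$; the second factor is weak$^{*}$-compact. One also uses $\min_{Y}Y(T)=-R\,d(T,X_{+})$, which penalises non-positive $T$ enough once $R\ge 2\|V\|$.
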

 \begin{proof}
     Suppose there exists a topological invariant mean $m$ not contained in the weak$^{*}$- closure of $B_{+}$. Then by the Hahn-Banach separation theorem, there exist $U \in  \mathcal{L}^{1}(C_{0}(G^{0},\mathcal{H}^{\pi}))^{*}$ and $r>0$ such that 
     $$ Re (U(T))<r< Re (m(U))$$
     for all $T \in B_{+}$. Hence, 
      $$  (U+U^{*})(T)<2r< m(U+U^{*})$$
for every $T \in B_{+}$.  This contradicts   \cite[Theorem A.$1$]{gluck2025increasing}, when we take the space of selfadjoint trace class operators in $\mathcal{L}^{1}(C_{0}(G^{0},\mathcal{H}^{\pi}))$, denoted $\mathcal{L}^{1}(C_{0}(G^{0},\mathcal{H}^{\pi}))_{sa}$ as a pre-ordered Banach space with the usual partial order $\leq$.
 \end{proof}

We now establish the connection between the existence of a topological invariant mean associated with $\pi$ and the amenability of $\pi$. The following theorem establishes that amenability requires only pointwise convergence of trace-class operators. The proof follows the general approach of \cite[Theorem $2.14$]{renault2015topological}, with suitable modifications.
\begin{thm}
     Let $G$ be a locally compact groupoid with Haarsystem $\lambda$ and a continuous unitary representation $\pi$ on the continuous field of Hilbert space $\mathcal{H}^{\pi}$. A topological invariant mean of $\pi$ exists if and only if there exists a sequence $\{S^{i}\}$ of positive trace class operators on $C_{0}(G^{0},\mathcal{H}^{\pi})$, such that $\|S^{i}_{u}\|_{1}\to 1$ for every $u\in G^{0}$ and $\|x\cdot S^{i}_{d(x)}-S^{i}_{r(x)}\|_{1}\to 0$ for every $x\in G$.
\end{thm}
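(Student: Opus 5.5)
For the direction "sequence $\Rightarrow$ mean", I will use a weak$^{*}$ compactness argument. Let $\{S^{i}\}$ be the given sequence. After normalising if necessary (replace $S^{i}$ by a cut-off so that $\|S^{i}\|_{1}\leq 1$), each $m_{S^{i}}$ lies in the closed unit ball of $\mathcal{L}^{1}(C_{0}(G^{0},\mathcal{H}^{\pi}))^{**}$. By Banach--Alaoglu I take a weak$^{*}$ cluster point $m$. Positivity (i) passes to the limit because $U\geq 0$ gives $U(S^{i})\geq 0$.

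For (ii), I use the representation $(2)$: for a probability measure $\eta$,
\[\widetilde{\mu_{\eta}}(S^{i})=\int_{G^{0}}\|S^{i}_{u}\|_{1}\,d\eta(u).\]
Since $\|S^{i}_{u}\|_{1}\leq 1$ and $\|S^{i}_{u}\|_{1}\to 1$ pointwise, dominated convergence gives the value $1$ in the limit.

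For (iii), I compute
\[\langle f*m_{S^{i}}-\lambda(f)\cdot m_{S^{i}},U\rangle=U\big(L(f)S^{i}-\lambda(f)S^{i}\big).\]
The fibre at $u$ of the argument equals $\int f(y)\big(y\cdot S^{i}_{d(y)}-S^{i}_{u}\big)\,d\lambda^{u}(y)$. Its trace norm tends to $0$ pointwise in $u$ by dominated convergence in $y$, using the bound $\leq 2\|f\|_{\infty}\lambda^{u}(\operatorname{supp}f)$. Writing $U$ through its vector measure $\mu$ and applying dominated convergence once more in $u$, using uniform boundedness in $u$ on compacts and the finite variation of $\mu$, I get that this tends to $0$. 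Here I need the fact that a Dinculeanu integral of a uniformly bounded sequence converging pointwise to $0$ in $\mathcal{L}^{1}(H)$ tends to $0$; I will check this via the bounded convergence theorem for vector measures of bounded variation. Hence $m$ is a topological invariant mean.

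For the converse "mean $\Rightarrow$ sequence", I follow \cite[Theorem $2.14$]{renault2015topological}. By Lemma 4.2 there is a net $T_{\alpha}\in B_{+}$ with $m_{T_{\alpha}}\to m$ weak$^{*}$. Then $L(f)T_{\alpha}-\lambda(f)T_{\alpha}\to 0$ weakly in $\mathcal{L}^{1}$, for every $f$ in a countable dense subset $\{f_{n}\}$ of $C_{c}(G)^{+}$ chosen with $\lambda(f_{n})$ equal to $1$ on large compacts. Testing against $\widetilde{\mu_{\delta_{u}}}$ gives $\|T_{\alpha,u}\|_{1}\to 1$ weakly, in the sense of evaluating at Dirac and finite combinations. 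Taking convex combinations (Mazur) in the product space $\prod_{n}\mathcal{L}^{1}\times\cdots$, I upgrade weak to norm convergence. This gives positive $S$ with $\|f_{n}\cdot S-\lambda(f_{n})S\|_{1}$ small for finitely many $n$, and $\mathrm{Tr}(S_{u})$ close to $1$ in the relevant sense; second countability lets me pass to a sequence.

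I then set $S'=f\cdot S$, as in the proof of Theorem 3.6, to turn the averaged invariance into pointwise invariance. I use
\[x\cdot(f\cdot S)_{d(x)}-(f\cdot S)_{r(x)}=\big((f(x^{-1}\cdot)-f)\cdot S\big)_{r(x)}+\ldots,\]
whose trace norm is controlled by combining the invariance of $S$ under all $f_{n}$ with the continuity of $x\mapsto f(x^{-1}\cdot)$ in $L^{1}(\lambda^{r(x)})$.

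The main obstacle is this last step. The Mazur argument only yields norm convergence in $\mathcal{L}^{1}(C_{0})$, i.e. uniform in $u$, rather than the weaker pointwise-in-$u$ control actually needed. The evaluation functionals $\widetilde{\mu_{\delta_{u}}}$ must also be handled carefully so that $\|S^{i}_{u}\|_{1}\to 1$ holds at every $u$. If a direct Mazur argument fails, I would first try the approach of Renault: establish $\|S^{i}_{u}\|_{1}\to 1$ and invariance for $\eta$-almost every $u$, for each probability $\eta$ in a countable family. From this I would deduce pointwise convergence via the continuity of $u\mapsto\|S_{u}\|_{1}$ and the convolution smoothing.
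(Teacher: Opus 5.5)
Your direction ``sequence $\Rightarrow$ mean'' is the paper's argument and is fine: a weak$^{*}$ cluster point, dominated convergence for (ii), and the fibrewise bound $\|(L(f)S^{i}-\lambda(f)S^{i})_{u}\|_{1}\le\int|f(y)|\,\|y\cdot S^{i}_{d(y)}-S^{i}_{u}\|_{1}\,d\lambda^{u}(y)$ integrated against $|\mu|$ for (iii). The converse follows the paper's overall plan (Lemma 4.2, Mazur in a product space, smoothing by $f\cdot S$). It does, however, have a genuine gap, and the gap is not where you locate it.

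\textbf{The gap is in the trace component.} You test $m$ only against the Dirac functionals $\widetilde{\mu_{\delta_{u}}}$, which gives $Tr(T_{\alpha,u})\to 1$ pointwise. The weak topology these functionals induce is just pointwise convergence, so convex combinations cannot improve it. Pointwise control is not enough, for two reasons.
\begin{enumerate}[(a)]
\item In the smoothing step, $Tr((f\cdot S)_{u})=\int f(y)\,Tr(S_{d(y)})\,d\lambda^{u}(y)$. To get $Tr((f\cdot S)_{u})>1-\varepsilon$ on a compact set, you need $Tr(S_{v})\geq 1-\varepsilon$ for every $v$ in the compact set $d(\operatorname{supp}f)$, not at finitely many points.
\item Extracting a sequence from a net via an exhaustion by compacts also requires uniform-on-compacts control.
\end{enumerate}

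\textbf{The missing idea} is to use axiom (ii) for every probability measure $\eta$, not only Diracs. Since $0\le Tr(T_{\alpha,u})\le 1$, for $\varphi\in L^{\infty}(\eta)$ you get $\bigl|\int(Tr(T_{\alpha,u})-1)\varphi(u)\,d\eta(u)\bigr|\le\|\varphi\|_{\infty}\bigl(1-m_{T_{\alpha}}(\widetilde{\mu_{\eta}})\bigr)\to 0$. Every finite Radon measure has the form $\varphi\,\eta$, so $Tr(T_{\alpha})\to 1$ weakly in $C_{b}(G^{0})$ with the strict topology, whose dual is $M(G^{0})$.

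Now run Mazur in $\mathcal{E}_{0}\times\prod_{f}\mathcal{E}_{f}$, with the strict topology on the first factor. This gives $S\in B_{+}$ such that, simultaneously, $Tr(S)$ is uniformly close to $1$ on a prescribed compact set and $\|L(f)S-\lambda(f)S\|_{1}$ is small for finitely many $f$. Convergence at every point and the passage to a sequence then follow automatically from $\sigma$-compactness.

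Your stated ``main obstacle'' is a misdiagnosis. Norm convergence in $\mathcal{L}^{1}(C_{0}(G^{0},\mathcal{H}^{\pi}))$ is uniform in $u$, which is stronger than what you need, and it is exactly what the smoothing step uses. So the fallback via $\eta$-a.e.\ convergence for a countable family of $\eta$ is unnecessary, and by itself it would not give convergence at every point.

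Your treatment of the smoothing step is a workable variant of the paper's. You approximate $f(x^{-1}\cdot)$ on $G^{r(x)}$ in $L^{1}(\lambda^{r(x)})$, uniformly for $x\in K$, by finitely many global functions, and use $\|z\cdot S_{d(z)}-S_{r(z)}\|_{1}\le 2$. The paper instead approximates $F(x,y)=g(x)f(x^{-1}y)$ by elementary tensors in $C_{c}(G*_{r}G)$. Your version works provided you take $\lambda(f)\equiv 1$ on $r(K)\cup d(K)$, so that the extra term $\bigl(\lambda(f)(d(x))-\lambda(f)(r(x))\bigr)S_{r(x)}$ vanishes.
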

\begin{proof}
    
Suppose there exists a sequence of positive trace class operators $\{S^{i}\}$ with the given properties. Define $m_{i}= m_{S^{i}} \in \mathcal{L}^{1}(C_{0}(G^{0},\mathcal{H}^{\pi}))^{**}  $.
Let $m$ be the weak*-cluster point of $m_{i}$. We will see that $m$ satisfies all the axioms of Definition $4.1$. 

Let $\eta$ be a probability measure on $G^{0}$, then 
$$m_{S^{i}}(\widetilde{\mu_{\eta}})= \int_{G^{0}}\phi(S^{i})d\mu_{\eta}(u)= \int_{G^{0}}Tr(S^{i}_{u}) d\eta(u) \to 1$$
by the Dominated Convergence Theorem. Hence it should hold for any subnet $\{m_{i_{j}}\}$ converging to $m$.

Let $U \in \mathcal{L}^{1}(C_{0}(G^{0},\mathcal{H}^{\pi}))^{*}$, then there exist a $\mu \in r\sigma bv(G^{0}, B(H))$  such that 
$$ \langle f*m_{i}-\lambda(f) \cdot m_{i}, U\rangle= \int_{G^{0}}\phi(L(f)S^{i}-\lambda(f) \cdot S^{i})d\mu(u).$$

\begin{align*}
    \left|\int_{G^{0}}\phi(L(f)S^{i}-\lambda(f) \cdot S^{i})d\mu(u)\right|&\leq \int_{G^{0}}\|(L(f)S^{i}-\lambda(f) \cdot S^{i})_{u}\|_{1}d|\mu|(u)\\
    & \leq \int_{G^{0}}\int_{G}|f(y)|\|y\cdot S^{i}_{d(y)}-S^{i}_{r(y)}\|_{1}d\lambda^{u}(y)d|\mu|(u)
\end{align*}
where the second inequality comes from \cite[Theorem A.$20$]{folland2016course}. Using Dominated Convergence Theorem, applied on $L^{1}(|\mu|\circ \lambda)$, we can see that,
$$\langle f*m_{i}-\lambda(f) \cdot m_{i}, U\rangle \to 0$$
The above holds for any subnet $\{m_{i_{j}}\}$ converging to $m$
 and thus for any $f \in C_{c}(G)$, we have $f*m= \lambda(f)\cdot m$. 

 Now we show the converse. Let $m$ be a topological invariant mean associated with $\pi$. Using Lemma $4.2$, there exists a net $\{m_{S^{i}}\}_{i\in I}$ where $S^{i} \in B_{+}$,  converging to $m$ in weak*-topology.
Let $\eta$ be a probability measure on $G^{0}$, then $m_{i}(\widetilde{\mu_{\eta}}) \to m(\widetilde{\mu_{\eta}})=1$.
If $\varphi \in L^{\infty}(\eta)$, then
\begin{align*}
    \left|\int_{G^{0}}Tr(S^{i}_{u}) \varphi(u) d\eta(u)-\int_{G^{0}}\varphi (u)d\eta(u)\right|&\leq \|\varphi\|_{\infty}\int_{G^{0}}(1-Tr(S^{i}_{u}))d\eta(u)\\
    &=\|\varphi\|_{\infty} (1-m_{i}(\widetilde{\mu_{\eta}})) 
\end{align*}
which tends to $0$. Since every complex measure on $G^{0}$ is of the form $\varphi\cdot \eta$ for some probability measure $\mu$, $Tr(S^{i})$ converges to $1$ in the weak topology on $C_{b}(G^{0})$ equipped with strict topology. 

Next, we can easily verify that for every $f\in C_{c}(G)$, we have $(L(f)S^{i}-\lambda(f)\cdot S^{i})\to 0$ in the weak topology on $\mathcal{L}^{1}(C_{0}(G^{0},\mathcal{H}^{\pi}))$.

Let $\mathcal{E}_{0}=C_{b}(G^{0})$ equipped with strict topology whose dual space is $M(G^{0})$ and $\mathcal{E}_{0,w}$ be the same space equipped with weak topology. For each $f\in C_{c}(G)$, let $\mathcal{E}_{f}$ be $\mathcal{L}^{1}(C_{0}(G^{0},\mathcal{H}^{\pi}))$ with its norm topology and $\mathcal{E}_{f,w}$ with weak topology. So, $\mathcal{F}= \mathcal{E}_{0} \times \prod_{f\in C_{c}(G)}\mathcal{E}_{f}$ and the same space with weak topology, $\mathcal{F}_{w}= \mathcal{E}_{0,w} \times \prod_{f\in C_{c}(G)}\mathcal{E}_{f,w}$, have the same closed convex sets. Hence, due to previous arguments $(1,(0)_{f\in C_{c}(G)})$ belongs to the closure of the convex set
$$C=\Big\{\left(Tr(S),(L(f)S-\lambda(f)\cdot S)_{f\in C_{c}(G)}\right): S\in B_{+}\Big\}.$$

So, there exists a net, again denoted $\{S^{i}\}_{i\in I}$ in $B_{+}$ such that 
\begin{equation}
    Tr(S^{i})\to 1,  ~~~~ \text {uniformly on compact subsets of}~~ G^{0}.
\end{equation} Also, $\|L(f)S^{i}-\lambda(f)\cdot S^{i}\|_{1} \to 0$, for every $f\in C_{c}(G)$, i.e.

$$\left\|\int_{G}f(y)(y\cdot S^{i}_{d(y)}-S^{i}_{r(y)})d\lambda^{u}(y)\right\|_{1} \to 0$$ uniformly on $G^{0}$.

For $F \in C_{c}(G*_{r}G)$, one can easily verify that
$$ \left\|\int_{G}F(x,y)(y\cdot S^{i}_{d(y)}-S^{i}_{r(y)})d\lambda^{r(x)}(y)\right\|_{1} \to 0$$ uniformly on $G$ due to the density of the span of functions of the form $F= f_{1}\otimes f_{2}$, where $f_{i}\in C_{c}(G)$. In particular, for a compact set $K$, the above happens for $F(x,y)=g(x)f(x^{-1}y)$ with $ g(x)\equiv 1$ on $K$ and $g,f \in C_{c}(G)$. More precisely, for $f\in C_{c}(G)$ and compact set $K$ in $G$,
\begin{equation}
  \left\|\int_{G}f(x^{-1}y)(y\cdot S^{i}_{d(y)}-S^{i}_{r(y)})d\lambda^{r(x)}(y)\right\|_{1} \to 0,~~\text {uniformly on compact subset}~~ K.
\end{equation}

In order to prove the converse, we only need to show that for $\varepsilon>0$ and compact subset $K$ of $G$, there exist a positive trace class operator $T$ such that 
$1-Tr(T_{u})< \varepsilon$ for all $u \in L$, where $L$ is a compact subset of $G^{0}$ and $K\subseteq G^{L}_{L}$ and $\|y\cdot T_{d(y)}-T_{r(y)}\|_{1}< \varepsilon$ for all $y\in K$.  Since $G$ and $G^{0}$ are second countable, they are exhaustible by compact sets, thus helping to find a sequence of $\{T^{i}\}$ with the required property.

For $\varepsilon >0$ and compact set $K$ in $G$, fix $f\in C_{c}(G)^{+}, \|f\|_{\lambda,1}\leq 1$  and $\lambda(f)\equiv 1$ on $L$. Using $(3)$ and $(4)$ there exist $S^{i}$ such that 
$$ Tr(S^{i}_{u})\geq 1-\varepsilon, ~~~ \forall u \in d(\operatorname{supp}(f))$$ and 
$$ \left\|\int_{G}f(x^{-1}y)(y\cdot S^{i}_{d(y)}-S^{i}_{r(y)})d\lambda^{r(x)}(y)\right\|_{1} < \frac{\varepsilon}{2},~~~~ \forall x\in K. $$

With this, define a trace class operator
$$T_{u}= \int_{G}f(y)\pi(y)S^{i}_{d(y)}\pi(y^{-1})d\lambda^{u}(y).$$
We can easily verify that $Tr(T_{u})>1-\varepsilon$, for all $u\in L$ and $\|x\cdot T_{d(x)}-T_{r(x)}\|_{1}< \varepsilon$ for all $x\in K$.
Thus, we get a sequence of positive trace-class operators satisfying the properties of Lemma $3.3$, hence the result follows. 
\end{proof}

Combining all those results above, we conclude with the following theorem characterizing amenability of the unitary representation $\pi$.
\begin{thm}
    Let $G$ be a locally compact second countable groupoid with a continuous unitary representation $\pi$, then the following are equivalent
    \begin{enumerate}[(i)]
        \item $\pi$ is amenable. 
        \item there exists a topological invariant mean associated with $\pi$.
        \item there exists a sequence $\{S^{i}\}$ in $B_{+}$ such that  $\|S^{i}_{u}\|_{1}\to 1$ for every $u\in G^{0}$ and $\|x\cdot S^{i}_{d(x)}-S^{i}_{r(x)}\|_{1}\to 0$ for every $x\in G$.
        \item there exists a sequence $\{S^{i}\}$ in $B_{+}$ such that $\|S^{i}_{u}\|_{1} \to 1$ uniformly on compact subsets of  $G^{0}$ and $\|x\cdot S^{i}_{d(x)}-S^{i}_{r(x)}\|_{1} \to 0$ uniformly on compact subsets of $G$.
        \item there exists a sequence $\{S^{i}\}$ in $B_{+}$ such that $\|S^{i}_{u}\|_{1} \to 1$ uniformly on compact subsets of  $G^{0}$ and $\|L(f)S^{i}- \lambda(f)\cdot S^{i}\|_{1} \to 0$ for every $f \in C_{c}(G)$.
        \item there exist a sequence of positive definite functions associated with $\pi \otimes \bar{\pi}$ that tends to $1$ uniformly on compact subsets of $G$.
    \end{enumerate}
\end{thm}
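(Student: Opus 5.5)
The plan is to assemble the equivalences from results already in the paper, arranged as a cycle with a few side links, and to check that the earlier statements are phrased exactly as needed here (sequences rather than nets, elements of $B_{+}$, and so on).

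\emph{Step 1: the equivalence $(i)\Leftrightarrow(iv)$.} This is Lemma 3.3, combined with the remark after Definition 3.1 that nets may be replaced by sequences when $G$ is $\sigma$-compact. Second countability gives an exhausting sequence of compacts $K_n\subseteq G$ and $L_n\subseteq G^{0}$. For each $n$ we pick one $S^{n}$ whose errors on $K_n$ and $L_n$ are less than $1/n$. The construction of Lemma 3.3 gives $S^{j}=|T^{j}|^{2}$ with $\|S^{j}\|_{1}=\|T^{j}\|_{2}^{2}\leq 1$, so $S^{j}$ is positive and lies in $B_{+}$.

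\emph{Step 2: the equivalence $(ii)\Leftrightarrow(iii)$.} This is exactly Theorem 4.3. The trivial implication $(iv)\Rightarrow(iii)$ holds because uniform convergence on compacts implies pointwise convergence.

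\emph{Step 3: closing the loop with $(ii)\Rightarrow(iv)$.} The converse half of the proof of Theorem 4.3 already produces this. Starting from a topological invariant mean, it constructs positive trace-class $T$ with $1-\operatorname{Tr}(T_u)<\varepsilon$ on $L$ and $\|x\cdot T_{d(x)}-T_{r(x)}\|_{1}<\varepsilon$ on $K$. The exhaustion then gives a sequence with uniform convergence on compacts, which is $(iv)$. We must check that $T$ lies in $B_{+}$. This holds because $\|f\|_{\lambda,1}\leq1$ and $\|S^{i}\|_{1}\leq1$ give $\operatorname{Tr}(T_u)\leq\int f\,d\lambda^{u}\leq 1$.

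\emph{Step 4: condition $(v)$.} For $(iv)\Rightarrow(v)$, I expand
\[
\bigl(L(f)S^{i}-\lambda(f)\cdot S^{i}\bigr)_{u}=\int_{G} f(y)\bigl(y\cdot S^{i}_{d(y)}-S^{i}_{r(y)}\bigr)\,d\lambda^{u}(y).
\]
Its trace norm is bounded by $\int|f|\,\|y\cdot S^{i}_{d(y)}-S^{i}_{r(y)}\|_{1}\,d\lambda^{u}$, and the integrand converges uniformly on the compact set $\operatorname{supp} f$. Uniform boundedness of $\lambda(|f|)$ then gives uniform convergence in $u$. For $(v)\Rightarrow(iv)$, I would rerun the last part of the proof of Theorem 4.3. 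It uses only $\operatorname{Tr}(S^{i})\to1$ uniformly on compacts together with $\|L(f)S^{i}-\lambda(f)\cdot S^{i}\|_{1}\to0$, passes to $F(x,y)=g(x)f(x^{-1}y)$ via the density of the span of $f_1\otimes f_2$, and averages $S^{i}$ by $f$.

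\emph{Step 5: condition $(vi)$.} The equivalence $(i)\Leftrightarrow(vi)$ is Theorem 3.10, and the sequence version again follows from second countability.

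\emph{Main obstacle.} I expect the delicate point to be $(v)\Rightarrow(iv)$. There one must verify that the norm convergence $\|L(f)S^{i}-\lambda(f)\cdot S^{i}\|_{1}\to0$, taken in the sup-over-$u$ trace norm of the module, transfers to the kernels $F(x,y)=g(x)f(x^{-1}y)$ uniformly over $x\in K$. To do this I would approximate $F$ uniformly by finite sums $\sum_k g_k(x)f_k(y)$ supported in a fixed compact set. Then I would bound the error by $\|F-\sum_k g_k\otimes f_k\|_\infty$ times $\sup_{u}\lambda(\chi_C)(u)$ times $2$, which uses $\|S^{i}\|_{1}\leq1$.
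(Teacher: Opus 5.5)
Your proposal is correct and is essentially the paper's argument. The paper gives no separate proof: it combines Lemma 3.3 for (i)$\Leftrightarrow$(iv), Theorem 4.3 for (ii)$\Leftrightarrow$(iii) (its converse half passes through a net version of (v) and ends in (iv)), and the characterisation via positive definite functions associated with $\pi\otimes\bar{\pi}$ for (vi), with second countability turning nets into sequences. Your extra checks supply details the paper leaves implicit: that the averaged operator $T$ stays in $B_{+}$, the direct estimate for (iv)$\Rightarrow$(v), and the uniform approximation of $F(x,y)=g(x)f(x^{-1}y)$ by elementary tensors, with $\|S^{i}\|_{1}\leq 1$ controlling the error.
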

\section{Restriction of representation and Amenability}

We have already seen that if $\pi$ is an amenable representation of $G$, then its restriction to each isotropy subgroup is amenable. This naturally raises the question to what extent the converse holds. In this section, we provide examples and counterexamples illustrating this question. We also prove a lemma concerning the amenability of representations of the well-known HLS groupoid (HLS stands for Higson, Lafforgue, and Skandalis).

Let $\Gamma$ be a discrete group. An \emph{approximating sequence} for
$\Gamma$ is a sequence $(K_n)_{n\in\mathbb N}$ of subgroups of $\Gamma$
satisfying the following conditions:
\begin{enumerate}
    \item[(i)] Each $K_n$ is a normal subgroup of $\Gamma$ with finite index.
    \item[(ii)] The sequence is decreasing; that is,
    \[
        K_n \supseteq K_{n+1}, \qquad n\in\mathbb N.
    \]
    \item[(iii)] The intersection of all the subgroups in the sequence is
    trivial:
    \[
        \bigcap_{n\in\mathbb N} K_n=\{e\}.
    \]
\end{enumerate}
A pair $(\Gamma,(K_n)_{n\in\mathbb N})$, consisting of a discrete group
$\Gamma$ together with a fixed approximating sequence, is called an
\emph{approximated group}.

We now describe the groupoid construction associated with an approximated
group, following the construction of Higson, Lafforgue, and Skandalis \cite{higson2002counterexamples}.

\begin{defi}
Let $(\Gamma,(K_n)_{n\in\mathbb N})$ be an approximated group. For each
$n\in\mathbb N$, set
\[
    \Gamma_n:=\Gamma/K_n,
\]
and let
\[
    q_n:\Gamma\longrightarrow\Gamma_n
\]
denote the canonical quotient homomorphism. For notational convenience,
put
\[
    \Gamma_\infty:=\Gamma
    \qquad\text{and}\qquad
    q_\infty:=\operatorname{id}_\Gamma.
\]

As a set, define
\[
    G:=\bigsqcup_{n\in\mathbb N\cup\{\infty\}}
    \{n\}\times\Gamma_n.
\]

We equip $G$ with the topology generated by the following collection of
open sets:
\begin{enumerate}
    \item[(i)] For every $n\in\mathbb N$ and every $g\in\Gamma_n$, the
    singleton
    \[
        \{(n,g)\}
    \]

    \item[(ii)] For every $g\in\Gamma$ and $N\in\mathbb N$, the set
    \[
        U(g,N):=
        \left\{
        (n,q_n(g)):
        n\in\mathbb N\cup\{\infty\},\ n>N
        \right\}
    \]
    
\end{enumerate}

Finally, we equip $G$ with the groupoid structure induced by the group
operations on each fibre. More precisely, the unit space is
\[
    G^{(0)}
    =
    \left\{
        (n,e):n\in\mathbb N\cup\{\infty\}
    \right\},
\]
and for $(n,g)\in G$ we define
\[
    d(n,g)=r(n,g)=(n,e).
\]
The multiplication and inverse are given by
\[
    (n,g)(n,h)=(n,gh),
    \qquad
    (n,g)^{-1}=(n,g^{-1}),
\]
whenever the products are considered within the same fibre $\Gamma_n$.

\end{defi}
It is straightforward to verify that the groupoid G constructed above is a locally compact, Hausdorff, second-countable, and étale groupoid. Furthermore, its unit space $G^{0}$ can be naturally identified with the one-point compactification of $\mathbb{N}$. We refer to $G$ as the \emph{HLS groupoid associated with the approximated group} $(\Gamma,(K_{n}))$.

In \cite[Lemma $2.4$]{Willet}, Rufus Willet proved a result regarding the amenability of HLS groupoid. Here, we show analogues result in terms of continuous representation.
 \begin{lem}
     Let $G$ be an HLS groupoid associated with approximated group $(\Gamma,(K_{n}))$ and $\pi$ be a continuous unitary representation of $G$. Then $\pi$ is amenable if and only if $\tilde{\pi}$ is amenable, where $\tilde{\pi}$ is the restriction of $\pi$ to $\Gamma_{\infty}=\Gamma$.
 \end{lem}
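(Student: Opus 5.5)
The forward implication is immediate from the results of Section~3. The fibre $\{\infty\}\times\Gamma$ is the isotropy group of $G$ at the unit $(\infty,e)$. It is closed in $G$, since its complement is the union of the open singletons $\{(n,g)\}$ with $n\in\mathbb{N}$. So if $\pi$ is amenable, Proposition~3.4 (equivalently Corollary~3.5) shows that $\tilde{\pi}$ is an amenable representation of $\Gamma$.

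For the converse, I will build the net of Definition~3.1 directly, using the structure of compact subsets of $G$. A compact $K\subseteq G$ is covered by finitely many basic sets $U(g,N)$, with $g$ ranging over a finite set $F\subseteq\Gamma$, together with finitely many isolated points $(n,h)$, $n\le N$. So it suffices to produce, for each finite $F\subseteq\Gamma$, each $N_0$ and each $\varepsilon>0$, a Hilbert--Schmidt operator $T$ on $C_{0}(G^{0},\mathcal{H}^{\pi})$ such that:
\begin{itemize}
\item $\|T\|_2\le 1$;
\item $\|T_u\|_2>1-\varepsilon$ for every unit $u$;
\item $|\langle x\cdot T_{d(x)},T_{r(x)}\rangle-1|<\varepsilon$ for all $x=(n,q_n(g))$ with $g\in F$ and $n>N$ for some $N\ge N_0$;
\item the same inequality holds at the finitely many remaining points of $K$.
\end{itemize}

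Step 1 works at infinity. Amenability of $\tilde{\pi}$ (for the group $\Gamma$, in Bekka's sense, or via Theorem~3.9) gives a finite-rank operator $T_\infty=\sum_{j=1}^{k}\langle\cdot,\eta_j\rangle\xi_j$ on $\mathcal{H}^{\pi}_{\infty}$ with $\|T_\infty\|_2=1$ and $|\langle g\cdot T_\infty,T_\infty\rangle-1|<\varepsilon/2$ for $g\in F$. By density of $C_0$-sections in each fibre, I choose sections $\tilde\xi_j,\tilde\eta_j\in C_{0}(G^{0},\mathcal{H}^{\pi})$ whose values at $\infty$ approximate $\xi_j,\eta_j$. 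I then set $T'=\sum_j\langle\cdot,\tilde\eta_j\rangle\tilde\xi_j$, which is finite-rank and adjointable.

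Step 2 checks continuity. The function $x\mapsto\langle x\cdot T'_{d(x)},T'_{r(x)}\rangle$ is a finite sum of products of the coefficient functions $\pi_{\tilde\xi_j,\tilde\xi_l}$ and $\overline{\pi_{\tilde\eta_j,\tilde\eta_l}}$. Indeed, $\langle\pi(x)A\pi(x)^{-1},B\rangle_{HS}$ for rank-one $A,B$ factors in this way, because $r(x)=d(x)$ here. Each such coefficient is continuous by axiom (v) of a representation. Likewise $u\mapsto\|T'_u\|_2$ is continuous. Hence, for each $g\in F$, the value at $(n,q_n(g))$ converges as $n\to\infty$ to the value at $(\infty,g)$. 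Since $F$ is finite, there is an $N\ge N_0$ such that the required estimates hold for all $n>N$, $g\in F$, and also $\|T'_{(n,e)}\|_2$ is close to $1$. Rescaling by the continuous function $u\mapsto\min(1,\|T'_u\|_2^{-1})$ on a neighbourhood of $\infty$ gives $\|T\|_2\le1$ there.

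Step 3 handles the finitely many points $n\le N$. Each unit $(n,e)$ with $n\le N$ is isolated, so I may redefine $T$ on these fibres arbitrarily without destroying membership in $\mathcal{L}^{2}$. On such a fibre $\Gamma_n$ is a finite group. I take a positive unit-trace rank-one operator $R$ on $\mathcal{H}^{\pi}_{(n,e)}$, average it to $S_n=|\Gamma_n|^{-1}\sum_{h\in\Gamma_n}\pi(n,h)R\,\pi(n,h)^{-1}$, which is exactly $\Gamma_n$-invariant, and put $T_{(n,e)}=S_n^{1/2}$. This satisfies $\langle x\cdot T,T\rangle=1$ exactly for every $x$ in that fibre, which covers the leftover points of $K$. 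Letting $(F,N_0,\varepsilon)$ run over a directed set (a sequence, by second countability) gives the net of Definition~3.1.

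I expect the main obstacle to be Step 2: passing from the pointwise estimate at $\infty$ to uniform estimates on the basic neighbourhoods $U(g,N)$. This requires checking carefully that the quantities in Definition~3.1 are genuinely continuous on $G$ for the chosen finite-rank $T'$, and that the renormalisation preserves both $\|T\|_2\le 1$ and $\|T_u\|_2\to1$. If the Hilbert--Schmidt formulation is awkward there, I would instead work with the positive trace-class operator $T'^{*}T'$ and the criterion of Lemma~3.3, using Bekka's inequalities as in its proof.
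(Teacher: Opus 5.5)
Your proposal is correct and follows essentially the paper's route. The forward direction goes through Corollary~3.5, and the converse treats the tail near $\infty$ by continuity of coefficient functions at $(\infty,g)$ and the finitely many finite levels $n\le N$ separately, gluing on complementary clopen subsets of $G^{0}$. The differences are cosmetic: you work directly with finite-rank Hilbert--Schmidt operators and Definition~3.1, whereas the paper uses positive definite functions associated with $\pi\otimes\bar{\pi}$ and Theorem~4.4(vi), and these correspond under the unitary equivalence in the proof of Theorem~3.9. Your averaging of a rank-one positive operator over the finite group $\Gamma_n$ also spells out the step that the paper attributes only to ``discreteness of $\mathbb{N}$ and finiteness of $\Gamma_{n}$''.
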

 \begin{proof}
     Suppose $\tilde{\pi}$ is amenable, then by \cite[Theorem $5.1$]{Bekkaamenablity} there exist a sequence of normalised positive type functions associated to the restriction of representation $\pi \otimes \bar{\pi}$, denoted $\tilde{\pi} \otimes \tilde{\bar{\pi}}$, converging to $1$ on every compact subset of $\Gamma$(also see \cite[Page $48\text{-}49$]{eymard2006moyennes}).
     
     We denote the restriction of $\pi \otimes \bar{\pi}$ on $\Gamma_{n}$ by $(\pi \otimes \bar{\pi})^{n}$.  By the arguments in the proof of \cite[Proposition $4.4$]{MR4880521}, we can see that $((\pi \otimes \bar{\pi})^{n}\circ q_{n})_{\xi,\xi}(x)= \langle((\pi \otimes \bar{\pi})^{n}\circ q_{n})(x)\xi(n),\xi(n)\rangle$ converges to $\langle (\tilde{\pi} \otimes \tilde{\bar{\pi}})(x)v,v\rangle$ uniformly on finite subets of $\Gamma$ for every $v \in \mathcal{H}^{\pi \otimes \bar{\pi}}_{\infty}$ and $\xi \in C_{b}(G^{0},\mathcal{H}^{\pi \otimes \bar{\pi}})$ with $\xi(\infty)= v $.

     Let $\{K_{i}\}_{i\in \mathbb{N}}$ be an increasing sequence of finite subsets of $\Gamma$ such that $\cup_{i\in \mathbb{N}}K_{i}=\Gamma$. 

     Combining above arguments, we can see that for each $i\in \mathbb{N}$ and $K_{i}$ there exist $n_{i}\in \mathbb{N}$ and $t_{i}\in C_{b}(G^{0},\mathcal{H}^{\pi \otimes \bar{\pi}})$ with $\|t_{i}(n)\|= 1$ for all $n\geq n_{i}$ and vanishes for $n< n_{i}$, such that 
     $$ \left|1-((\pi \otimes \bar{\pi})^{n}\circ q_{n})_{t_{i},t_{i}}(x)\right| < \frac{1}{i}$$ for $n\geq n_{i}$ and for all $x\in K_{i}$.

     Thus, $\left|1- (\tilde\pi \otimes \tilde{\bar{\pi}})_{t_{i} ,t_i}(n,q_{n}(x))\right| < \frac{1}{i}$ for $n\geq n_{i}$ and for all $x\in K_{i}$.

     By discreteness of $\mathbb{N}$  and finiteness of $\Gamma_{n}$ for all $n< n_{i}$, there exist $s_{i} \in C_{b}(G^{0},\mathcal{H}^{\pi \otimes \bar{\pi}})$ with $\|s_{i}(n)\|=1$ for $n<n_{i}$ and vanishes for all $n\geq n_{i}$, such that 
     $$\left|1- (\tilde\pi \otimes \tilde{\bar{\pi}})_{s_{i} ,s_{i}}(n,q_{n}(x))\right| < \frac{1}{i}$$ for all $x \in \Gamma$, $n<n_{i}$. 
      Combining we get,
     $$\left|1- (\tilde\pi \otimes \tilde{\bar{\pi}})_{t_{i}+s_{i},t_{i}+s_{i}}(n,q_{n}(x))\right| < \frac{1}{i}$$ for all $x \in K_{i}$, $n\in \mathbb{N}$.

    Let the chosen sequence $\{n_{i}\}_{i \in \mathbb{N}}$ be an increasing sequence of natural numbers. Suppose $K$ is a compact subset of $G$ , then there exist $j\in \mathbb{N}$ and a finite subset $F$ of $\mathbb{N} \cup \{\infty\}$ such that $K \subset \cup_{n\in F}\{(n, q_{n}(K_{j}))\} $. Hence, we can find a suitable $t_{j},s_{j} \in C(G^{0},\mathcal{H}^{\pi \otimes \bar{\pi}})$ such that 
    $$\left|1- (\tilde\pi \otimes \tilde{\bar{\pi}})_{t_{j}+s_{j},t_{j}+s_{j}}(n,q_{n}(x))\right| < \frac{1}{j}$$ for all $ (n,q_{n}(x)) \in K$.

   Hence the result follows from Theorem $4.4(vi)$.
      The other way is direct from Corollary $3.5$.
 \end{proof}

One consequence of the above lemma is the following. This result is the connection between the weak containment and amenability of representations of HLS groupoid.

Let $\pi$ and $\rho$ be two continuous representations of the groupoid $G$. Then, we say $\pi$ is \textit{weakly contained} in $\rho$ denoted as $\pi \prec \rho$ if  for every  positive definite function $\phi$ associated with $\pi$, compact subset $K$ of $G$, and $\varepsilon >0$, there exist $\eta_{1},..., \eta_{n} \in C_{b}(G^{0}, \mathcal{H}^{\rho})$ such that 
$$  \left|\phi(x)- \sum_{i=1}^{n} \langle \rho(x)\eta_{i}(d(x)), \eta_{i}(r(x))\rangle\right| < \varepsilon,~~\text{for every}~~x\in K.$$

\begin{corl}
Let $\pi$ and $\rho$ be two representations of the HLS groupoid $G$ associated with the approximated group $(\Gamma,(K_n))$. Suppose that $\pi$ is amenable and $\pi \prec \rho$. Then $\rho$ is amenable.
\end{corl}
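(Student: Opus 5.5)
The plan is to reduce everything to the parent group $\Gamma$ via Lemma 5.2, and then invoke Bekka's result for groups: if an amenable representation of a discrete group is weakly contained in another representation, the latter is amenable. Concretely, since $\pi$ is amenable, Corollary 3.5 shows that the restriction $\tilde{\pi}$ of $\pi$ to the isotropy group $\Gamma_\infty=\Gamma$ is amenable. If we can show that $\tilde{\pi}\prec\tilde{\rho}$ as unitary representations of the discrete group $\Gamma$, then Bekka's theorem \cite{Bekkaamenablity} gives that $\tilde{\rho}$ is amenable. Lemma 5.2 then upgrades this to amenability of $\rho$ on all of $G$.

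The key step is to pass weak containment from $G$ to $\Gamma$. Take a positive definite function on $\Gamma$ of the form $\psi(g)=\langle\tilde{\pi}(g)v,v\rangle$ with $v\in\mathcal{H}^{\pi}_{\infty}$, a finite set $F\subset\Gamma$ and $\varepsilon>0$.

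First, I would extend $v$ to a bounded continuous section $\xi\in C_{b}(G^{0},\mathcal{H}^{\pi})$ with $\xi(\infty)=v$. This is possible because the sections are fiberwise dense: choose $\xi'\in C_0(G^0,\mathcal{H}^\pi)$ with $\xi'(\infty)$ close to $v$. Alternatively, use the continuity of the bundle to take a section through $v$, as in \cite[Corollary 2.10]{lazar2018selection}.

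Next, set $\phi=\pi_{\xi,\xi}$; this is a positive definite function associated with $\pi$. The set $K=\{(\infty,g):g\in F\}$ is a compact (finite) subset of $G$. Applying $\pi\prec\rho$ with this $K$ gives $\eta_1,\dots,\eta_n\in C_b(G^0,\mathcal{H}^\rho)$ such that
\[
\Bigl|\phi(\infty,g)-\sum_{i=1}^n\langle\rho(\infty,g)\eta_i(\infty),\eta_i(\infty)\rangle\Bigr|<\varepsilon,\qquad g\in F.
\]
Since $\phi(\infty,g)=\psi(g)$, this says exactly that $\psi$ is approximated uniformly on $F$ by sums of diagonal coefficients of $\tilde\rho$, with vectors $\eta_i(\infty)\in\mathcal{H}^{\rho}_\infty$. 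If the section $\xi$ only satisfies $\xi(\infty)\approx v$ rather than equality, an extra $\varepsilon$ term controlled by $\|v\|$ absorbs the error. Running over all $F$, $\varepsilon$ and $v$ gives $\tilde\pi\prec\tilde\rho$ in the usual group sense.

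Then apply Bekka's theorem \cite[Theorem 5.1 / Corollary 5.3]{Bekkaamenablity}. The form I would use is that $\sigma$ is amenable if and only if $1_\Gamma\prec\sigma\otimes\bar\sigma$. From $\tilde\pi\prec\tilde\rho$ we get $\tilde\pi\otimes\bar{\tilde\pi}\prec\tilde\rho\otimes\bar{\tilde\rho}$ by continuity of tensor products with respect to weak containment. Together with $1_\Gamma\prec\tilde\pi\otimes\bar{\tilde\pi}$, transitivity of weak containment gives $1_\Gamma\prec\tilde\rho\otimes\bar{\tilde\rho}$. Equivalently, by \cite[Theorem 5.1]{Bekkaamenablity}, amenability of $\tilde\pi$ passes to $\tilde\rho$. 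Lemma 5.2 then gives $\rho$ amenable.

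I expect the main obstacle to be the first step, the restriction of weak containment to the fiber at $\infty$. Specifically, every vector in $\mathcal{H}^\pi_\infty$, or at least a dense set of them, must arise as the value at $\infty$ of a bounded continuous section, so that the groupoid-level definition of $\prec$ actually sees all coefficients of $\tilde\pi$. I would handle this with fiberwise density of $C_0(G^0,\mathcal{H}^\pi)$ plus a routine approximation argument. This suffices, because coefficients of a dense set of vectors determine weak containment.
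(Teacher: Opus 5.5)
Your proposal is correct and follows essentially the paper's route: restrict the weak containment to the isotropy group $\Gamma$, apply Bekka's result that a representation weakly containing an amenable one is amenable \cite[Corollary 5.3]{Bekkaamenablity}, and lift back to $G$ with Lemma 5.2. The paper simply asserts that $\pi\prec\rho$ implies $\tilde{\pi}\prec\tilde{\rho}$, whereas you justify it by extending (or approximating) vectors of $\mathcal{H}^{\pi}_{\infty}$ by continuous sections and testing on the finite set $\{(\infty,g):g\in F\}$; you also unpack Corollary 5.3 via Theorem 5.1.
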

\begin{proof}
    $\pi \prec \rho$ implies the restrictions $\tilde{\pi} \prec \tilde{\rho}$. Then by \cite[Corollary $5.3$]{Bekkaamenablity} and Lemma $5.2$, we get the result.
\end{proof}
Another consequence is related to topological property(T) of HLS groupoid in the sense of \cite[Definition $3.6$]{dell2022topological} and amenability of representation of HLS groupoid.

\begin{corl}
  Let $\pi$ be a representation of HLS groupoid $G$ associated with approximated group $(\Gamma,(K_{n}))$ having topological property(T). Then $\pi$ is amenable if and only if its restriction $\tilde{\pi}$ to $\Gamma$ has finite dimensional subrepresentation.
\end{corl}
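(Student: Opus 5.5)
By Lemma 5.2, $\pi$ is amenable if and only if its restriction $\tilde{\pi}$ to the parent group $\Gamma$ is amenable in Bekka's sense. So the statement reduces to a purely group-theoretic fact about $\Gamma$. The plan is to show two things: that topological property (T) of the HLS groupoid $G$ forces $\Gamma$ to have property (T) in the classical sense, and that for a group with property (T) a unitary representation is amenable precisely when it has a nonzero finite-dimensional subrepresentation.

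For the first step I would use the characterisation in \cite{dell2022topological}. Topological property (T) of $G$ means that every representation of $G$ which almost has invariant sections, uniformly on compacta, has a genuinely invariant section. For the HLS groupoid this is known to be equivalent to $\Gamma$ having property (T) relative to the approximating sequence. I would quote that $\Gamma$ itself then has Kazhdan's property (T). If this is not directly available, I would argue from the positive-definite-function formulation: Kazhdan's property (T) asks that positive definite functions converging to $1$ on compacta converge uniformly. I would extend positive definite functions on $\Gamma$ to $G$ by the same restriction and limit arguments used in the proof of Lemma 5.2, via the coefficients $((\pi\otimes\bar{\pi})^n\circ q_n)_{\xi,\xi}$. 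I expect this transfer to be the main obstacle, because topological property (T) of $G$ constrains representations of $G$, not arbitrary representations of $\Gamma$. Not every representation of $\Gamma$ arises as the fibre at $\infty$ of a representation of $G$. What I would try first is to restrict attention to those $\tilde{\pi}$ that come from $G$, since only these are needed. Then I only have to show that almost invariant vectors of $\tilde{\pi}\otimes\overline{\tilde{\pi}}$ extend to almost invariant sections of $\pi\otimes\bar{\pi}$ over $G^{0}$. The construction of $t_i+s_i$ in Lemma 5.2 does exactly this.

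For the second step I would use Bekka's theorem \cite[Theorem 5.1]{Bekkaamenablity}: $\tilde{\pi}$ is amenable if and only if $\tilde{\pi}\otimes\overline{\tilde{\pi}}$ almost has invariant vectors. Property (T), either of $\Gamma$ or in the transferred form for representations coming from $G$, upgrades this to the existence of a nonzero invariant vector in $\mathcal{H}_\infty\otimes\overline{\mathcal{H}_\infty}\cong HS(\mathcal{H}_\infty)$. That is, we obtain a nonzero Hilbert–Schmidt operator $T$ intertwining $\tilde{\pi}$. The positive compact operator $T^{*}T$ is also an intertwiner, so it commutes with $\tilde{\pi}$, and its nonzero eigenspaces are finite-dimensional and $\tilde{\pi}$-invariant. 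This gives a finite-dimensional subrepresentation. Conversely, if $\tilde{\pi}$ has a finite-dimensional subrepresentation on $\mathcal{M}$, then the normalised orthogonal projection onto $\mathcal{M}$ is a unit Hilbert–Schmidt vector fixed by $\tilde{\pi}\otimes\overline{\tilde{\pi}}$. Hence $\tilde{\pi}$ is amenable, and this direction needs no property (T).

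Finally, I would combine the two directions with Lemma 5.2: $\pi$ is amenable if and only if $\tilde{\pi}$ is amenable, which holds if and only if $\tilde{\pi}$ has a finite-dimensional subrepresentation. Property (T) is used only in the forward direction.
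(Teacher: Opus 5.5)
Your main line is the paper's proof. Lemma 5.2 reduces amenability of $\pi$ to Bekka-amenability of $\tilde{\pi}$, the cited Dell'Aiera--Willett results give Kazhdan's property (T) for $\Gamma$, and Bekka's Corollary 5.9 finishes. Where the paper cites that corollary, you reprove it from his Theorem 5.1: an invariant Hilbert--Schmidt operator $T$, the finite-dimensional eigenspaces of $T^{*}T$, and the normalised projection for the converse. That reproof is correct.

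The fallback you sketch for the first step would not work, so the citation cannot be bypassed. Suppose topological property (T) just means that almost invariant sections force an invariant section. Then every HLS groupoid satisfies it trivially. Over each isolated point $n\in\mathbb{N}$ the group $\Gamma_n$ is finite, so $(\pi\otimes\bar{\pi})^{n}$ always has a unit invariant Hilbert--Schmidt vector, namely a normalised projection onto a finite-dimensional invariant subspace. The section equal to this vector at $n$ and zero elsewhere is continuous and invariant. Such a section says nothing about the fibre at $\infty$.

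So the plan of extending almost invariant vectors of $\tilde{\pi}\otimes\overline{\tilde{\pi}}$ to almost invariant sections (the $t_i+s_i$ construction of Lemma 5.2) and then applying property (T) of $G$ gives no invariant vector in $\mathcal{H}^{\pi}_{\infty}\otimes\overline{\mathcal{H}^{\pi}_{\infty}}$. What you need is the uniform (Kazhdan pair, or Kazhdan projection) form of topological property (T), applied to representations living on the closed invariant set $\{\infty\}$, that is, passing to the reduction $G|_{\{\infty\}}=\Gamma$. That is exactly what the paper takes from Dell'Aiera--Willett.
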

\begin{proof}
    By \cite[Proposition $4.11$ and Corollary $4.4$]{dell2022topological}, the parent group $\Gamma$ has usual Kazdan property(T). Then the proof directly follows from \cite[Corollary $5.9$]{Bekkaamenablity} and Lemma $5.2$.
\end{proof}

 Next we give a simple example where the amenability of restriction of a groupoid representation on an isotropy subgroup does not imply the amenability of the entire representation.
 \begin{exm}
     Let $G$ be the HLS groupoid associated with the approximated group $(\Gamma,(K_{n}))$, where $\Gamma= SL_{n}(\mathbb{Z}),n\geq2$. Note that $SL_{n}(\mathbb{Z}),n\geq2$ is not amenable. Let $\pi$ be the left regular representation of $G$. Then $\pi$ restricted to $\Gamma_{n}, n\in \mathbb{N}$, is amenable since $\Gamma_{n}$ is finite, but  $\pi$ is not amenable as $\pi$ restricted to $\Gamma$ is not amenable.
 \end{exm}

For a transitive groupoid, amenability of the isotropy subgroups is equivalent to amenability of the groupoid. Here, we consider the problem of extending an amenable representation of an isotropy subgroup to the entire groupoid. We consider a class of groupoids for which an amenable representation of an isotropy subgroup can be extended to an amenable representation of the groupoid. 

 Let $G$ be a transitive groupoid. Fix $u \in G^{0}$. For $v\in G^{0}$, fix an element $\gamma_{v}\in G_{u}^{v}$ and set. For every $v,w\in G^{0}$, define $\phi_{v,w}:G^{u}_{u}\to G_{v}^{w}, \phi_{v,w}(x)=\gamma_{w}x\gamma_{v}^{-1}$. It is a homeomorphism.
 
 For $u\in G^{0}$, we call a continuous function $\gamma: G^{0}\to G_{u}$ a continuous section for the range map $r:G_{u}\to G^{0}$ if $r\circ \gamma$ is identity map. In the following proposition, we employ the technique of extending a representation of an isotropy subgroup developed in \cite[Theorem 4.1]{MR4880521}. In particular, the following result holds on transitive groupoids with discrete unit space.

 \begin{prop}
      Let $G$ be a locally compact Hausdorff transitive groupoid admitting a continuous section $\gamma$ for the range map $r:G_{u} \to G^{0}$. Then for an amenable representation $\pi$ of $G^{u}_{u}$, there exist an amenable representation $\pi'$ of $G$ such that $\pi'_{|_{G^{u}_{u}}}$ is unitarily equivalent to $\pi$.
 \end{prop}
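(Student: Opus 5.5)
We construct $\pi'$ explicitly from the continuous section and then transport the approximating operators of $\pi$ along it. Write $H=G^{u}_{u}$ and, changing $\gamma$ if necessary to $\gamma(v)\gamma(u)^{-1}$, assume $\gamma(u)=u$ and write $\gamma_{v}=\gamma(v)\in G^{v}_{u}$. Following \cite[Theorem 4.1]{MR4880521}, let $\mathcal{H}^{\pi'}$ be the trivial bundle $G^{0}\times\mathcal{H}^{\pi}$, with continuous sections $C_{0}(G^{0},\mathcal{H}^{\pi})$ in the ordinary sense. For $x\in G^{w}_{v}$ we have $\gamma_{w}^{-1}x\gamma_{v}\in H$, so we set
$$\pi'(x)=\pi(\gamma_{w}^{-1}x\gamma_{v}).$$
The representation axioms (i)--(iv) are immediate, since $\gamma_{v}\gamma_{v}^{-1}=v$ and $\pi$ is a homomorphism. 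For the continuity axiom (v), the map $x\mapsto\gamma_{r(x)}^{-1}x\gamma_{d(x)}$ is continuous $G\to H$ because $\gamma$ is continuous. Hence
$$x\mapsto\langle\pi(\gamma_{r(x)}^{-1}x\gamma_{d(x)})\xi(d(x)),\eta(r(x))\rangle$$
is continuous, using joint continuity of $(h,a,b)\mapsto\langle\pi(h)a,b\rangle$ for the group representation $\pi$. At $u$ the formula gives $\pi'|_{H}=\pi$ exactly, so unitary equivalence is automatic. The module $C_{0}(G^{0},\mathcal{H}^{\pi})$ is countably generated because $\mathcal{H}^{\pi}$ is separable and $G^{0}$ is second countable.

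For amenability we use Bekka's characterisation of amenable group representations: there is a net of positive trace class operators $S^{i}$ on $\mathcal{H}^{\pi}$ with $\|S^{i}\|_{1}=1$ and $\|\pi(h)S^{i}\pi(h)^{-1}-S^{i}\|_{1}\to0$ uniformly on compact subsets of $H$. Take $g\in C_{0}(G^{0})$ with $0\le g\le1$, and define the constant-in-the-fibre operator $\widetilde S^{i}_{v}=g(v)S^{i}$. This is a positive trace class operator in $\mathcal{L}^{1}(C_{0}(G^{0},\mathcal{H}^{\pi}))$, since its trace $g$ lies in $C_{0}(G^{0})$. Indexing the net over pairs (compact set, $i$) and choosing $g\equiv1$ on larger and larger compact sets gives $\|\widetilde S_{v}\|_{1}\to1$ uniformly on compact subsets of $G^{0}$.

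The equivariance estimate is then
$$\|x\cdot\widetilde S_{d(x)}-\widetilde S_{r(x)}\|_{1}\le\|\pi(h_{x})S^{i}\pi(h_{x})^{-1}-S^{i}\|_{1}+|g(d(x))-g(r(x))|,\qquad h_{x}=\gamma_{r(x)}^{-1}x\gamma_{d(x)}.$$
As $x$ ranges over a compact $K\subseteq G$, $h_{x}$ ranges over a compact subset of $H$, so the first term is small uniformly. The second term vanishes once $g\equiv1$ on $r(K)\cup d(K)$. Lemma 3.3 then gives amenability of $\pi'$.

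The main obstacle I expect is verifying that $\pi'$ is a genuine continuous representation in the bundle sense of \cite{bos2011continuous}. The point is the joint continuity of $(h,a)\mapsto\pi(h)a$, which holds for a strongly continuous unitary group representation on bounded sets. I would check this carefully rather than the amenability estimate, which is routine once the section $\gamma$ transfers compact sets of $G$ to compact sets of $H$.
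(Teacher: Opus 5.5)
Your argument is correct and is essentially the paper's proof. It uses the same constant bundle, the same formula $\pi'(x)=\pi(\gamma_{r(x)}^{-1}x\gamma_{d(x)})$, and the same cut-off operators $v\mapsto g(v)S^{i}$ built from almost invariant trace-class operators of the group. The differences are minor: the paper checks only pointwise convergence and appeals to Theorem 4.4(iii), whereas you use continuity of $x\mapsto\gamma_{r(x)}^{-1}x\gamma_{d(x)}$ to get uniformity on compacta and conclude directly from Lemma 3.3, bypassing the invariant-mean machinery. Your normalisation $\gamma(u)=u$ also makes $\pi'|_{G^{u}_{u}}=\pi$ exactly, rather than up to conjugation by $\pi(\gamma(u))$.
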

 \begin{proof}
     Given that $\pi$ is the representation of $G^{u}_{u}$ on the Hilbert space $\mathcal{K}^{\pi}$. We will define a representation $(\mathcal{H}^{\pi'},\pi')$, where the Hilbert bundle $\mathcal{H}^{\pi'}$ is a constant bundle with fibers $\mathcal{H}^{\pi'}_{v}=\mathcal{K}^{\pi},v\in G^{0}$.  For each $v\in G^{0}$, set $\gamma_{v}=\gamma(v)$. Define $\pi'(x)= \pi(\phi^{-1}_{s(x),r(x)}(x))$. So $(\mathcal{H}^{\pi'},\pi')$ is a continuous unitary representation of groupoid. As the Hilbert bundle $\mathcal{H}^{\pi'}$ is a constant bundle, the continuous section vanshing at infinity is $C_{0}(G^{0},\mathcal{H}^{\pi})$. Take an increasing sequence of compact subsets $\{K_{n}\}_{n\in \mathbb{N}}$, such that $K_{n}\subseteq \operatorname{int}(K_{n+1})$ and $\cup_{i=1}^{\infty}K_{i}=G^{0}$. Define a sequence of positive continuous functions with compact support $\{f_{n}\}_{n\in \mathbb{N}}$ on $G^{0}$ such that $f_{n}\equiv 1$ on  $r(K_{n})\cup d(K_{n})$ for every $n\in \mathbb{N}$. Also by amenability of $\pi$, there exist a sequence of positive trace-class operators $\{T'_{n}\}_{n\in \mathbb{N}}$ of unit norm on $\mathcal{H}^{\pi}$ such that $\|\pi(x)T_{i}'\pi(x^{-1})-T'_{i}\|_{1}\to 0$ for every $x\in G^{u}_{u}$. Define $T^{i}= f_{i}T'_{i}$. By Theorem $4.4(iii)$, it is easily verifiable that $T^{i}$ is the required sequence of positive trace-class operators. 
 \end{proof}

\section*{Acknowledgement}
 K. N. Sridharan is supported by the NBHM doctoral fellowship with Ref number: 0203/13(45)/2021-R\&D-II/13173.

\section*{Data Availability}
Data sharing does not apply to this article as no datasets were generated or analyzed during the current study.

\section*{competing interests}
The authors declare that they have no competing interests.
 \bibliographystyle{abbrv}
\bibliography{reff}
\end{document}